\documentclass[11pt]{amsart}
\usepackage{multirow,bigdelim}
\usepackage{amsfonts}
\usepackage{dsfont}
\usepackage{amsmath}
\usepackage{cases}
\usepackage{mathrsfs}
\usepackage{hyperref}
\usepackage{multimedia}
\usepackage{graphicx}
\usepackage{indentfirst}
\usepackage{bm}
\usepackage{amsthm}
\usepackage{mathrsfs}
\usepackage{latexsym}
\usepackage{amsmath}
\usepackage{amssymb}
\usepackage{microtype}
\usepackage{relsize}
\usepackage[all]{xy}

\usepackage{hyperref}

\voffset = -56pt \hoffset = -48pt
\textwidth = 6.675in

\linespread{1.2}
\textheight = 23.5cm
\DeclareSymbolFont{SY}{U}{psy}{m}{n}
\DeclareMathSymbol{\emptyset}{\mathord}{SY}{'306}

\theoremstyle{plain}

\newtheorem{thm}{Theorem}[section]
\newtheorem{corollary}[thm]{Corollary}
\newtheorem{lemma}[thm]{Lemma}
\newtheorem{proposition}[thm]{Proposition}
\newtheorem{definition}[thm]{Definition}

\theoremstyle{definition}
\newtheorem{remark}[thm]{Remark}
\newtheorem{example}[thm]{Example}

\numberwithin{equation}{section}


\setlength{\arraycolsep}{1.25pt}

\begin{document}
\title[On the flag structure and classification of the holomorphic curves on C*-algebras]{On the flag structure and classification of the holomorphic curves on C*-algebras}

\author{Zhimeng Chen} \author{Jing Xu}
\email{chenzhimeng2020@163.com, xujingmath@outlook.com}
\address{School of Mathematical Sciences, Hebei Normal University, Shijiazhuang, Hebei 050016, China}
\thanks{This work was supported by the National
Natural Science Foundation of China, Grant
No. 11922108 and 12001159. }

\subjclass[2000]{Primary 47C15, 47B37; Secondary 47B48, 47L40}

\keywords{Holomorphic curve, Curvature, Flag structure, $\mathcal{FB}_{2}(\Omega)$.}

\begin{abstract}
In this note, we will define the formulas of curvature and it's covariant derivatives for holomorphic curves on C*-algebras for the multivariable case. As applications, the unitarily and similarly classification theorems for holomorphic bundle and commuting operator tuples in Cowen-Douglas class are given.
\end{abstract}

\maketitle

\section{Introduction}

Given an infinite dimensional and complex separable Hilbert space $\mathcal{H}$, let $\mathcal{L}(\mathcal{H})$ be the algebra of bounded linear operators on $\mathcal{H}$, and
$\mathcal{G}r(n,\mathcal{H})$ be the Grassmann manifold, the set of all $n$-dimensional subspaces of $\mathcal{H}$.
Given a bounded domain $\Omega$ of $m$-dimensional complex space $\mathbb{C}^{m}$,
a map $f:\Omega \rightarrow \mathcal{G}r(n,\mathcal{H})$ is called as a holomorphic carve,
if there is $n$ holomorphic $\mathcal{H}$-valued functions $\gamma_{1},\cdots,\gamma_{n}$ on $\Omega$ such that $f(\lambda)=\bigvee\{\gamma_{1}(\lambda),\cdots,\gamma_{n}(\lambda)\}$ for all $\lambda\in\Omega,$ where $\bigvee$ denotes the closure of linear span.
Then a natural $n$-dimensional Hermitian holomorphic vector bundle $E_{f}$ is induced over $\Omega,$ that is,
$$E_{f}=\{(x,\lambda)\in\mathcal{H}\times\Omega : x\in f(\lambda)\}\quad \textup{and}\,\,\pi:E_{f}\rightarrow\Omega,\quad \textup{where}\,\,\pi(x,\lambda)=\lambda.$$

M. Martin \cite{Flag1987} shows that a smooth function $f:\Omega\rightarrow\mathcal{G}r(n,\mathcal{H})$ is analytic if and only if its corresponding self-adjoint projection $[f]:\Omega\rightarrow\mathcal{L}(\mathcal{H})$ satisfies $(1-[f])\overline{\partial}_{i}[f]=0$ for $1\leq i\leq m.$
In Cowen-Douglas theory, the holomorphic curve in Grassmann manifold is a fundamental and important concept.
Two holomorphic curves $f,\widetilde{f} :\Omega\rightarrow\mathcal{G}r(n,\mathcal{H})$ are congruent, if there is a unitary operator $U$ in $\mathcal{L}(\mathcal{H})$ such that $Uf=\widetilde{f}$. M.J. Cowen and R.G. Douglas \cite{CD, CD3} show that the congruence of $f$ and $\widetilde{f}$ if and only if $E_{f}$ and $E_{\widetilde{f}}$ are locally equivalent as Hermitian holomorphic vector bundles. This result, known as the rigidity theorem, gives a link between the congruence problem for maps and the equivalence problem for bundles.
Using some results and techniques in the theory of involutive algebras and  certain operator theoretic techniques developed in \cite{AM}, M. Martin \cite{Flag1987,M1985} also gives the congruence theorem and equivalence theorem in general cases.

M. Martin and N. Salinas \cite{Flag} first proposed the holomorphic curve on $C^{*}$-algebras. Let $\mathcal{U}$ be a unital $C^{*}$-algebra, if an $n$-tuple $e=(e_{1},\cdots,e_{n})$ of idempotents in $\mathcal{U}$ and satisfied that $e_{1}\leq\cdots\leq e_{n},$ then $e$ is called an extended $n$-flag of $\mathcal{U}.$ If an $n$-tuple $P=(P_{1},\cdots,P_{n})$ is extended $n$-flag of $\mathcal{U}$ and every entry $P_{i},\,1\leq i\leq n,$ is a projection in $\mathcal{U}$, then $P$ is called $n$-flag in $\mathcal{U}$.
Let $\mathcal{I}_{n}(\mathcal{U})$ and $P_{n}(\mathcal{U})$ be the set of all extended $n$-flags of $\mathcal{U}$ and of all $n$-flags of $\mathcal{U}$, respectively.
Moreover, for $n=1,$ the spaces $\mathcal{I}(\mathcal{U})$ and $P(\mathcal{U})$ are alternatively called the extended Grassmann manifold and the Grassmann manifold of $\mathcal{U},$ respectively.
M. Martin \cite{Flag2022} shows that a smooth projection valued mapping $P:\Omega\rightarrow P(\mathcal{U})$ for $\mathcal{U}=\mathcal{L}(\mathcal{H})$ is holomorphic if and only if
it satisfies the following equivalent Cauchy-Riemann equations,
\begin{align*}
&\partial P(\lambda)\cdot P(\lambda)=\partial P(\lambda),\quad
P(\lambda)\cdot\partial P(\lambda)=0,\\
&P(\lambda)\cdot\overline{\partial} P(\lambda)=\overline{\partial} P(\lambda),\quad
\overline{\partial} P(\lambda)\cdot P(\lambda)=0,\quad\lambda\in\Omega\subseteq\mathbb{C}^{m},
\end{align*}
where $\partial=\sum\limits_{i=1}^{m}\frac{\partial}{\partial\lambda_{i}}$ and $\overline{\partial}=\sum\limits_{j=1}^{m}\frac{\partial}{\partial\overline{\lambda}_{j}}$.
For $\Omega\subseteq\mathbb{C}$, M. Martin and N. Salinas \cite{Flag,Flag1987,S1988} show that a mapping $P: \Omega \rightarrow P(\mathcal{U})$ is holomorphic if and only if $\overline{\partial}P(\lambda)P(\lambda)=0$ for all $\lambda\in\Omega$.
M. Martin and N. Salinas  \cite{Flag1998} studied the self-adjoint idempotent from the point of view of differential geometry, while giving a smooth mapping $P=(P_{1},\cdots,P_{n})$ from a complex manifold into the $n$-flag manifold $\mathcal{P}_{n}(A)$ is holomorphic if and only if $(1-P_{j})\overline{\partial}P_{j}=0,\,1\leq j\leq n$, where $A$ is a Banach $*$-algebra,
which generalizes the results \cite{Flag1987,S1988}. In addition, it is proved that the Gram-Schmidt map from extended $n$-flag to $n$-flag is holomorphic map in $C^{*}$-algebras.

In \cite{M1981},\cite{M1985}, M. Martin makes a connection between the various properties of holomorphic maps from connected complex manifold $M$ into the Grassmann manifold $\mathcal{G}r(n,\mathcal{H})$ and the theory of Hermintian holomorphic vector bundles on $M$ of rank $n$ that is real-analytic metrics. M. Martin discusses the pointwise equivalence of hermitian smooth vector bundles equipped with compatible linear connections if and only if they are locally equivalent which are restricted to open densn subsets if and only if Hermitian holomorphic vector bundles are locally equivalent. Generalizes the conclusion in \cite{CD}, gives a similar result of the Rigidity theorem in the general case that $M$, which is an arbitrarily dimensional complex manifold, and proves the Congruence theorem of holomorphic maps by using Hermitian holomorphic bundles and the Equivalence theorem. In fact, the Congruence theorem can be regarded as a corollary of the Equivalence theorem.

For two holomorphic curves $P$ and $Q$ are congruent if and only if there exists a unitary operator $U\in\mathcal{L}(\mathcal{H})$ such that $UP(\lambda)=Q(\lambda)U,\,\lambda\in\Omega$ (denoted by $P\sim_{u}Q$). In \cite{Flag}, M. Martin and N. Salinas reduce the unitary equivalence of Cowen-Douglas class to the congruence equivalence of projection-valued holomorphic maps and give the unitarily invariants of holomorphic curve $P$ by considering the partial derivatives $\partial^{I}P\overline{\partial}^{J}P,\,I,J\in\mathbb{N}$.
In \cite{Flag2022}, M. Martin gives the completes sets of invariants for untarily equivalent the tuples of Cowen-Douglas operators, and for congruent holomorphic mappings by studies $D^{I}P\overline{D}^{J}P,I,J\in\mathbb{Z}_{+}^{m}.$
For two holomorphic curves $P$ and $Q$ are similarity equivalence if and only if there exists an invertible operator $X\in\mathcal{L}(\mathcal{H})$ such that $XP(\lambda)=Q(\lambda)X,\,\lambda\in\Omega$ (denoted by $P\sim_{s}Q$). In \cite{Jiang2004,Jiang2005,Jiang2007,Jiang2011}, L.C. Jiang and the other authors characterize the decomposition uniquensess of operators and the similarity classification of Cowen-Douglas operators by $K_{0}$-group of operator transposition algebra, and generalized the result to holomorphic curve.
In \cite{CAOT}, K. Ji gives the definition of curvature and it's covariant derivative of holomorphic curve on $C^{*}$-algebras in the case of single variable, and proves the relation between it and the classical curvature and it's covariant derivative of holomorphic curve in the case of single variable.
In \cite{JS}, K. Ji and J. Sarkar study curvatures, a projection formula and a trace-curvature formula in terms of Hilbert-Schmidt norm by geometrizing method. Furthermore, the necessary and sufficient conditions for a pure $\mathcal{M}$-contractive Hilbert module $\mathcal{H}\in\mathcal{B}_{m}(\mathbb{D})$ to be similar to $\mathcal{M}\otimes\mathbb{C}^{m}$ are given.

\par The paper is organized as follows. In Sect.\ref{section2}, some notations and well known results will be introduced. We will define the curvature and it's covariant derivatives of extended holomorphic curves in the multivariate on $C^{*}$-algebras and we explore the relationship between it and classical curves and it's covariant derivatives of extended holomorphic curves. We prove that unitarily classification and similarity classification theorems of extended holomorphic curves and extended holomorphic curves on $C^{*}$-algebras in the multivariate. In Sect.\ref{section3}, we give that the representation of the orthogonal projection corresponding to operators $T\in\mathcal{FB}_{2}(\Omega).$ We will characterize the unitary equivalence of two operators $T,S\in\mathcal{FB}_{2}(\Omega)$ by holomorphic curves.

\section{\sf The curvature of extended holomorphic curves on $C^{*}$-algebra}\label{section2}
The first part of this paper is devoted to introducing a suitable setting for the
study of extended holomorphic curves on $C^{*}$-algebras in the multivariate case. We shall be
primarily concerned with the relation between the curvature and it's covariant derivatives (connection) of extended holomorphic curves on $C^{*}$-algebras and the classical definitions in the multivariate case. In the following subsection, we recall a few general definitions and some basic properties.

\subsection{\sf Extended holomorphic curves}\label{sec2.1}
\
\newline
\indent Let $\Omega$ be a domain (open and connected set) in $\mathbb{C}^{m}$, and let $\mathbb{Z}_{+}^{m}$ be the set of $m$-tuples of nonnegative integers. If $I=(i_{1},\cdots,i_{m})\in\mathbb{Z}_{+}^{m}$, we
set, as usual, $\vert I\vert=i_{1}+\cdots+i_{m}$, $I!=i_{1}!\cdots i_{m}!$,
$D^{I}=\partial_{1}^{i_{1}}\cdots \partial_{m}^{i_{m}}$,
$\overline{D}^{I}=\overline{\partial}_{1}^{i_{1}}\cdots\overline{\partial}_{m}^{i_{m}}$,
 $\partial^{I}=\partial_{1}^{i_{1}}+\cdots+\partial_{m}^{i_{m}}$,
and $\overline{\partial}^{I}=\overline{\partial}_{1}^{i_{1}}+\cdots+\overline{\partial}_{m}^{i_{m}}$, where $\partial_{i}=\frac{\partial}{\partial\lambda_{i}}$ and $\overline{\partial}_{i}=\frac{\partial}{\partial\overline{\lambda}_{i}}$, $1\leq i\leq m$.
In particular, $\partial=\partial_{1}+\cdots+\partial_{m}$ and $\overline{\partial}=\overline{\partial}_{1}+\cdots+\overline{\partial}_{m}$.
Without causing confusion, denote $0=(0,\cdots,0)$ and $e_{i}=(0,\cdots,0,1,0,\cdots,0)\in\mathbb{Z}_{+}^{m}$ with 1 on the $i$th position.

\begin{definition}\label{def2.1}
Let $\Omega\subseteq\mathbb{C}^{m}$ be a domain, and let $\mathcal{U}$ be a unital $C^{*}$-algebra, we say a real-analytic map $\mathcal{I}:\Omega\rightarrow \mathcal{I}(\mathcal{U})$ as an extended holomorphic curve if the following statements hold:
\begin{align*}
&\partial\mathcal{I}(\lambda)\cdot\mathcal{I}(\lambda)=\partial\mathcal{I}(\lambda),\quad
\mathcal{I}(\lambda)\cdot\partial\mathcal{I}(\lambda)=0,\\
&\mathcal{I}(\lambda)\cdot\overline{\partial}\mathcal{I}(\lambda)=\overline{\partial}\mathcal{I}(\lambda),\quad
\overline{\partial}\mathcal{I}(\lambda)\cdot\mathcal{I}(\lambda)=0,
\quad\lambda\in\Omega.
\end{align*}
Obviously, the above formula is equivalent to
\begin{align*}
&\partial_{i}\mathcal{I}(\lambda)\cdot\mathcal{I}(\lambda)=\partial_{i}\mathcal{I}(\lambda),\quad
\mathcal{I}(\lambda)\cdot \partial_{i}\mathcal{I}(\lambda)=0,\\
&\mathcal{I}(\lambda)\cdot\overline{\partial}_{j}\mathcal{I}(\lambda)=\overline{\partial}_{j}\mathcal{I}(\lambda),\quad
\overline{\partial}_{j}\mathcal{I}(\lambda)\cdot\mathcal{I}(\lambda)=0,
\quad 1\leq i,j\leq m, \lambda\in\Omega.
\end{align*}
\end{definition}
Let $B$ be a unital $C^{*}$-algebra. The Hilbert $B$-module $l^{2}(\mathbb{N},B)$ is defined as
$$l^{2}(\mathbb{N},B):=\Big\{(a_{i})_{i\in\mathbb{N}}:a_{i}\in B\textup{ and } \sum\limits_{i\in\mathbb{N}}\Vert a_{i}\Vert^{2}\leq \infty\Big\}.$$
Let $\mathcal{U}:=\mathscr{L}(l^{2}(\mathbb{N},B))$ be the algebra of all bounded linear operators on $l^{2}(\mathbb{N},B)$
and $F,G:\Omega\rightarrow \mathscr{L}(\mathcal{H},l^{2}(\mathbb{N},B))$ are operator valued functions that satisfy
$$F(\lambda)\xi=\sum_{i=0}^{\infty}\langle\xi,\widehat{e}_{i}\rangle F_{i}(\lambda)\quad \text{and}\quad G(\lambda)\xi=\sum_{i=0}^{\infty}\langle\xi,\widehat{e}_{i}\rangle G_{i}(\lambda),\quad \xi\in\mathcal{H}, \lambda\in\Omega,$$
where $F_{i},G_{i}:\Omega \rightarrow l^{2}(\mathbb{N},B),i\geq0$, are holomorphic and $\{\widehat{e}_{i}\}_{i=0}^{\infty}$ is a standard orthogonal basis of $\mathcal{H}.$
Assuming that the mapping $\mathcal{I}:\Omega\rightarrow\mathcal{I}(\mathcal{U})$ satisfies $\mathcal{I}(\lambda)=F(\lambda)(G^{*}(\lambda)F(\lambda))^{-1}G^{*}(\lambda)$ for $F(\lambda)$ and $G(\lambda)$ that satisfy the above, it can be seen from Definition \ref{def2.1} that the mapping $\mathcal{I}:\Omega\rightarrow \mathcal{I}(\mathcal{U})$ is an extended holomorphic curve and $\text{Ran}\,F(\lambda)=\text{Ran}\,\mathcal{I}(\lambda).$ We denote the set composed of all such mappings as $\mathcal{I}_{n}(\Omega,\mathcal{U})$.

\begin{definition}\label{def2.2}
Let $\mathcal{I}\in\mathcal{I}_{n}(\Omega,\mathcal{U})$, $\mathcal{I}(\lambda)=F(\lambda)(G^{*}(\lambda)F(\lambda))^{-1}G^{*}(\lambda)$, and the metric $$H(\lambda)=\left(\langle F_{j}(\lambda),G_{i}(\lambda)\rangle\right)_{n\times n}=G^{*}(\lambda)F(\lambda).$$
The classical connection $\Theta(\lambda)$ and classical curvature $\mathcal{K}_{\mathcal{I}}$ of $\mathcal{I}$ are defined as
\begin{align}\label{e0}
 \Theta(\lambda)=H^{-1}(\lambda)\sum_{i=1}^{m}\left(\frac{\partial}{\partial{\lambda}_{i}}H(\lambda)\right)\quad \text{and}\quad \mathcal{K}_{\mathcal{I}}(\lambda)=-\sum_{i,j=1}^{m}\frac{\partial}{\partial\overline{\lambda}_{j}}\left(H^{-1}(\lambda)\frac{\partial}{\partial{\lambda}_{i}}H(\lambda)\right).
\end{align}
And the covariant derivatives of the curvature $\mathcal{K}_{\mathcal{I}}$ are defined as the following:
\begin{itemize}
\item[(1)]$\mathcal{K}_{\mathcal{I},\overline{\lambda}_{j}}
=\frac{\partial}{\partial\overline{\lambda}_{j}}(\mathcal{K}_{\mathcal{I}})$,
\item[(2)]$\mathcal{K}_{\mathcal{I},\lambda_{i}}
=\frac{\partial}{\partial{\lambda}_{i}}(\mathcal{K}_{\mathcal{I}})+[H^{-1}\frac{\partial}{\partial{\lambda}_{i}}H,\mathcal{K}_{\mathcal{I}}],\quad\lambda\in\Omega.$
\end{itemize}
\end{definition}

In particular, for any $m$-tuple of commuting operators $\mathbf{T}=(T_{1},\cdots,T_{m}) \in \mathbf{\mathcal{B}}_{n}^{m}(\Omega)\bigcap \mathcal{L}(\mathcal{H})^m,$
assuming that $\{F_{i}\}_{i=1}^{n}$ and $\{G_{i}\}_{i=1}^{n}$ are the frames of the Hermitian holomorphic vector bundle $E_{\textbf{T}}$ and auxiliary Hermitian holomorphic vector bundle $E$, respectively.
Setting $F=(F_{1},\cdots,F_{n})$ and $G=(G_{1},\cdots,G_{n})$,
define the mapping $\mathcal{I}_{\textbf{T}}:\Omega\rightarrow\mathcal{I}(\mathcal{L}(\mathcal{H}))$ as follows
$$\mathcal{I}_{\textbf{T}}(\lambda):=F(\lambda)(G^{*}(\lambda)F(\lambda))^{-1}G^{*}(\lambda).$$
Then $\mathcal{I}_{\textbf{T}}:\Omega\rightarrow\mathcal{I}(\mathcal{L}(\mathcal{H}))$ is an extended holomorphic curve and $\mathcal{I}_{\textbf{T}}(\lambda)$ is the matrix form of idempotent from $\mathcal{H}$ to $\ker \mathscr{D}_{\textbf{T}-\lambda}$ as an operator.
If $m\geq2$ is a positive integer, then $$-((\mathcal{K}_{\mathcal{I}_{\textbf{T}}})_{\lambda_{l}})_{\overline{\lambda}_{k}}+
((\mathcal{K}_{\mathcal{I}_{\textbf{T}}})_{\overline{\lambda}_{k}})_{\lambda_{l}}
=\overline{\partial}_{k}(H^{-1}\partial_{l}H)\overline{\partial}(h^{-1}\partial H)-\overline{\partial}(H^{-1}\partial H)\overline{\partial}_{k}(H^{-1}\partial_{l}H),\quad 1\leq l, k\leq m,$$ where $\lambda=(\lambda_{1},\ldots,\lambda_{m})\in\Omega.$ From the properties of the matrix, in general, the covariant derivative $\mathcal{K}_{\mathcal{I}_{\textbf{T}},\lambda^{I},\overline{\lambda}^{J}},I,J\in\mathbb{Z}_{+}^{m}\setminus\{0\},$ of classical curvature depends on the order of $I$ and $J$.

\subsection{\sf The Cowen-Douglas class}

Let $\textbf{T}=(T_{1},\cdots,T_{m}),$ $\textbf{S}=(S_{1},\cdots,S_{m})\in\mathcal{L}(\mathcal{H})^{m},$ which is all $m$-tuples of commuting operators on $\mathcal{H}.$ $\textbf{T}$ and $\textbf{S}$ are unitarily equivalent (denotes $\textbf{T}\sim_{u}\textbf{S}$), if there is a unitary operator $U$ such that $UT_{i}=S_{i}U,\,1\leq i\leq m.$ $\textbf{T}$ and $\textbf{S}$ are similar (denotes $\textbf{T}\sim_{s}\textbf{S}$), if there is a invertible operator $X$ such that $XT_{i}=S_{i}X,\,1\leq i\leq m.$
\par Given $m$-tuples $\textbf{T}=(T_{1},\cdots T_{m})\in\mathcal{L}(\mathcal{H})^{m},$ define an operator $\mathscr{D}_{\textbf{T}}:\mathcal{H}\rightarrow\mathcal{H}\oplus\cdots\oplus\mathcal{H}$ by
$$\mathscr{D}_{\textbf{T}}h=(T_{1}h,\cdots,T_{m}h),$$
for any $h\in\mathcal{H}$. Set $\textbf{T}-\lambda:=(T_{1}-\lambda_{1},\cdots,T_{m}-\lambda_{m}),$ then $\mathscr{D}_{\textbf{T}-\lambda}= \bigcap\limits_{i=1}^{m}\ker(T_{i}-\lambda_{i}),\,\lambda=(\lambda_{1},\cdots,\lambda_{m})\in\Omega$ is joint kernel.
In , M.J. Cowen and R.G. Douglas introduced the class $\mathcal{B}_{n}^{m}(\Omega),$ let $\textbf{T}=(T_{1},\cdots T_{m})\in\mathcal{L}(\mathcal{H})^{m},$ we say that $\textbf{T}\in\mathcal{B}_{n}^{m}(\Omega)$ if satisfying the following conditions:
\begin{equation*}
 \begin{aligned}
\mathcal{B}_{n}^{m}(\Omega)=\{\textbf{T}\in\mathcal{L}(\mathcal{H})^{m}:\quad
&(1)\,\text{ran}\mathscr{D}_{\textbf{T}-\lambda}\textit{ is closed for any }\lambda\in\Omega;\\
&(2)\, \bigvee_{\lambda\in\Omega}\ker\mathscr{D}_{\textbf{T}-\lambda}=\mathcal{H};\textit{and}\\
&(3)\,\dim\ker\mathscr{D}_{\textbf{T}-\lambda}=n\textit{ for any }\lambda\in\Omega.\,\}
 \end{aligned}
\end{equation*}
where $\bigvee$ denotes the closure of linear span. For $m$-tuple $\textbf{T}\in\mathcal{B}_{n}^{m}(\Omega),$ there is a Hermitian holomorphic vector bundle $E_{\textbf{T}}$ of rank $n,$ where
$$E_{\textbf{T}}=\{(\lambda,x)\in\Omega\times\mathcal{H}:x\in\ker{(\textbf{T}-\lambda)}\},\quad\pi(\lambda,x)=\lambda.$$
For any $m$-tuples $\textbf{T}\in\mathcal{B}_{n}^{m}(\Omega)$, we can find a holomorphic frame $\{e_{i}\}_{i=1}^{n}$ of $E_{\textbf{T}}$ such that $\textbf{T}_{i}e_{i}(\lambda)=\lambda_{i}e_{i}(\lambda),\lambda\in\Omega.$ A map $f:\Omega\rightarrow\mathcal{G}r(n,\mathcal{H})$ with $n$ dimension defined by
$$f(\lambda):=\ker(\textbf{T}-\lambda)=\bigvee\{e_{i}(\lambda),i=1,2,\cdots,n\},\quad \lambda\in\Omega,$$
then map $f$ is said to be a holomorphic curve.
Furthermore, M.J. Cowen and R.G. Douglas proved that
for any two $m$-tuples of operators $\textbf{T},\,\textbf{S}\in\mathcal{B}_{n}^{m}(\Omega),$ $\textbf{T}$ and $\textbf{S}$ are unitarily equvilent if and only if the complex bundles $E_{\textbf{T}}$ and $E_{\textbf{S}}$ are equivalent as Hermitian holomorphic vector bundles.
Let $\gamma=\{\gamma_{1},\cdots,\gamma_{n}\}$ be a holomorphic frame of $E_{\textbf{T}}$ on $\Omega$ and form the Gram matrix $h(\lambda)=(\langle\gamma_{j}(\lambda),\gamma_{i}(\lambda)\rangle)_{i,j=1}^{n}$. Then the curvature $\mathcal{K}_{\textbf{T}}$ of the $E_{\textbf{T}}$ is $\mathcal{K}_{\textbf{T}}(\lambda)=-\sum\limits_{j=1}^{m}\frac{\partial}{\partial \overline{\lambda}_{j}}\left(h^{-1}(\lambda)\sum\limits_{i=1}^{m}\frac{\partial}{\partial \lambda_{i}}h(\lambda)\right).$
In particular, for $\textbf{T},\textbf{S}\in\mathcal{B}_{1}^{1}(\Omega),$ we have that $\mathcal{K}_{\textbf{T}}(\lambda)=-\frac{\partial^{2}\log\Vert\gamma(\lambda)\Vert^{2}}{\partial \lambda\partial \overline{\lambda}},$ then $\textbf{T}$ and $\textbf{S}$ are unitarily equivalent if and only if $\mathcal{K}_{\textbf{T}}=\mathcal{K}_{\textbf{S}}.$


\subsection{\sf The curvature and connection}
\indent In the following, we define the connection, curvature and it's covariant derivatives of extended holomorphic curves on $C^{*}$-algebras in the multivariate case, and prove their relationship to the classical definitions of connections, curvatures and covariant derivatives on holomorphic curves, which can be viewed as the natural and essential generalizations of the classical definition.

\begin{definition}\label{D22}
Let $\mathcal{I}\in\mathcal{I}_{n}(\Omega,\mathcal{U})$ and $\mathcal{I}(\lambda)=F(\lambda)(G^{*}(\lambda)F(\lambda))^{-1}G^{*}(\lambda)$, define the connection and curvature of the extended holomorphic curves $\mathcal{I}$ as
\begin{equation}\label{eq1}
\pmb{\Theta}=(\partial F)H^{-1}G^{*}-\partial \mathcal{I}\quad \text{and}\quad\mathscr{K}(\mathcal{I})=
\overline{\partial}\mathcal{I}\partial\mathcal{I},
\end{equation}
respectively, where $H(\lambda)=G^{*}(\lambda)F(\lambda)$. And the covariant derivative of the curvature $\mathscr{K}(\mathcal{I})$ is defined as follows:
\begin{equation}\label{e2.3}
\mathscr{K}_{I+e_{l},J}(\mathcal{I})=\mathscr{K}_{\sum\limits_{p\neq l}i_{p}e_{p},J}(\mathcal{I})
+\mathcal{I}(\partial_{l}(\mathscr{K}_{i_{l}e_{l},J}(\mathcal{I}))),\quad i_{p}\neq 0,
\end{equation}
\begin{equation}\label{e2.4}
\mathscr{K}_{I,J+e_{k}}(\mathcal{I})=\mathscr{K}_{I,\sum\limits_{q\neq k}j_{q}e_{q}}(\mathcal{I})
+(\overline{\partial}_{k}(\mathscr{K}_{I,j_{k}e_{k}}(\mathcal{I})))\mathcal{I},\quad j_{q}\neq0,
\end{equation}
where $I=(i_{1},\ldots,i_{m})$ and $J=(j_{1},\ldots,j_{m})$ in $\mathbb{Z}_{+}^{m}.$
\end{definition}

\begin{remark}\label{re2}
Similar to the covariant derivatives of classical curvature mentioned in the previous subsection, note that for $1\leq l,k\leq m$, there are
$$\mathscr{K}_{0+e_{l},e_{k}}(\mathcal{I})=\mathcal{I}(\partial_{l}(\mathscr{K}_{0,e_{k}}))=
\overline{\partial}_{k}\partial\mathcal{I}\partial_{l}\partial\mathcal{I}-\overline{\partial}\mathcal{I}\partial_{l}\mathcal{I}\overline{\partial}_{k}\mathcal{I}\partial\mathcal{I}-
\overline{\partial}_{k}\mathcal{I}\partial_{l}\mathcal{I}\overline{\partial}\mathcal{I}\partial\mathcal{I}$$
and
$$\mathscr{K}_{e_{l},0+e_{k}}(\mathcal{I})=(\overline{\partial}_{k}(\mathscr{K}_{e_{l},0}))\mathcal{I}
=\overline{\partial}_{k}\partial\mathcal{I}\partial_{l}\partial\mathcal{I}-\overline{\partial}\mathcal{I}\partial_{l}\mathcal{I}\overline{\partial}_{k}\mathcal{I}\partial\mathcal{I}
-\overline{\partial}\mathcal{I}\partial\mathcal{I}\overline{\partial}_{k}\mathcal{I}\partial_{l}\mathcal{I},$$ which means that the covariant derivative $\mathscr{K}_{I,J}(\mathcal{I}),I,J\in\mathbb{Z}_{+}^{m}\setminus\{0\},$ of curvature of extended holomorphic curve on $C^{*}$-algebras depends on the order of $I$ and $J$.
\end{remark}

M. J. Cowen and R. G. Douglas \cite{CD,CD3} used complex geometry techniques to prove that the classical curvature and its covariant derivatives are complete unitary invariants of the Cowen-Douglas class. The curvature and its covariant derivatives of holomorphic curves on $C^{*}$-algebras in the case of one variable are proved by K. Ji to be completely unitary invariant in Theorem 2.9 \cite{CAOT}, while it is also shown from Theorem 2.10 and Theorem 2.18 \cite{CAOT} that the curvature and its covariant derivatives are also crucial in the similarity classification of holomorphic curves and extended holomorphic curves on $C^{*}$-algebras in the case of one variable. We use the following two lemmas to show that the covariant derivatives $\mathscr{K}_{I,J}(\mathcal{I})$ and
$\mathcal{K}_{\mathcal{I}, \lambda^{I},\overline{\lambda}^{J}}$ of curvature in Definition \ref{def2.2} and Definition \ref{D22} are equal to the sum of forms $\mathscr{K}_{i_{p}e_{p},j_{q}e_{q}}(\mathcal{I})$ and $\mathcal{K}_{\mathcal{I}, \lambda^{i_{p}e_{p}},\overline{\lambda}^{j_{q}e_{q}}},$ respectively, which will be useful for the following study of the covariant derivative of the curvature of extended holomorphic curves on $C^{*}$-algebras.

\begin{lemma}\label{D3}
Let $\mathcal{I}\in\mathcal{I}_{n}(\Omega,\mathcal{U})$ be an extended holomorphic curve on $C^{*}$-algebras. Then the covariant derivative $\mathscr{K}_{I,J}(\mathcal{I})$ of the curvature $\mathscr{K}(\mathcal{I})$ of $\mathcal{I}$ has the following properties:
\begin{itemize}
  \item [(1)]$\mathscr{K}_{I,0}(\mathcal{I})=\sum\limits_{i_{p}\neq 0}\mathscr{K}_{i_{p}e_{p},0}(\mathcal{I})$ for any $I=\sum\limits_{p=1}^{m}i_{p}e_{p}\in\mathbb{Z}_{+}^{m}\setminus\{0\}$;
  \item [(2)]$\mathscr{K}_{0,J}(\mathcal{I})=\sum\limits_{j_{q}\neq 0}
  \mathscr{K}_{0,j_{q}e_{q}}(\mathcal{I})$ for any $J=\sum\limits_{q=1}^{m}j_{q}e_{q}\in\mathbb{Z}_{+}^{m}\setminus\{0\}$;
  \item [(3)] $\mathscr{K}_{I,J}(\mathcal{I})=\sum\limits_{i_{p},j_{q}\neq 0}
  \mathscr{K}_{i_{p}e_{p},j_{q}e_{q}}(\mathcal{I})$, where the right derivative order of the equation follows the left derivative order and $I,J\in\mathbb{Z}_{+}^{m}\setminus\{0\}.$
\end{itemize}
\end{lemma}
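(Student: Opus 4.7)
The plan is to prove parts (1) and (2) by induction on the size of the support of $I$ and $J$ respectively, and then to deduce (3) by combining them, exploiting the fact that the recurrences (\ref{e2.3}) and (\ref{e2.4}) act on independent sides of the expression.

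For (1), I induct on $k := \vert\{p : i_p \neq 0\}\vert$. The base case $k=1$ is trivial: $I = i_l e_l$ and both sides collapse to $\mathscr{K}_{i_l e_l, 0}(\mathcal{I})$. For the inductive step, let $I$ have support of size $k \geq 2$, pick some $l$ with $i_l \neq 0$, and apply (\ref{e2.3}) to the decomposition $(I - e_l) + e_l = I$:
$$
\mathscr{K}_{I,0}(\mathcal{I}) \;=\; \mathscr{K}_{\sum_{p \neq l} i_p e_p,\,0}(\mathcal{I}) \;+\; \mathcal{I}\bigl(\partial_l\,\mathscr{K}_{(i_l - 1)e_l,\,0}(\mathcal{I})\bigr).
$$
The first summand has support of size $k-1$, so the inductive hypothesis identifies it with $\sum_{p \in \sigma(I),\, p \neq l}\mathscr{K}_{i_p e_p, 0}(\mathcal{I})$. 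A secondary induction on $i_l$, using (\ref{e2.3}) restricted to the pure $l$-direction, identifies the second summand with $\mathscr{K}_{i_l e_l, 0}(\mathcal{I})$. Adding the two yields the claim.

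Part (2) proceeds by the entirely symmetric induction on $\vert\{q : j_q \neq 0\}\vert$, using (\ref{e2.4}) in place of (\ref{e2.3}) and noting that the accumulated $\mathcal{I}$ factors now appear on the right. For part (3), the key structural observation is that (\ref{e2.3}) only modifies the left index and inserts an $\mathcal{I}$ on the left, while (\ref{e2.4}) only modifies the right index and inserts an $\mathcal{I}$ on the right; the two operations therefore act on disjoint sides and the associated expansions commute. Consequently, starting from $\mathscr{K}_{I,J}(\mathcal{I})$, one first applies part (1) to the left index to obtain $\sum_{i_p \neq 0}\mathscr{K}_{i_p e_p, J}(\mathcal{I})$ and then applies part (2) to each summand on the right to obtain $\sum_{i_p \neq 0,\,j_q \neq 0}\mathscr{K}_{i_p e_p,\,j_q e_q}(\mathcal{I})$; because the reductions act on opposite sides, the prescribed order "right derivative order follows left derivative order" is preserved throughout.

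The main obstacle is bookkeeping the degenerate cases of the recurrences, especially when $i_l = 0$ in the old index $I$ (so we are introducing a new direction $l$ into the support rather than incrementing an existing one). In such cases the constraint "$i_p \neq 0$" attached to (\ref{e2.3}) signals that the term $\mathscr{K}_{i_l e_l, J}(\mathcal{I})$ degenerates to $\mathscr{K}_{0, J}(\mathcal{I})$, and the first summand must be accounted for accordingly; an analogous phenomenon occurs for (\ref{e2.4}). One must verify, along the lines of the direct computation in Remark \ref{re2}, that the telescoping in the pure single-direction recurrence produces exactly the expected $\mathscr{K}_{i_l e_l, 0}(\mathcal{I})$ without leftover boundary contributions, and that when combining (\ref{e2.3}) and (\ref{e2.4}) for part (3) no mixed residual terms appear between the left and right $\mathcal{I}$-factors.
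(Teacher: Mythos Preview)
Your argument for (1) and (2) is essentially identical to the paper's: both are straight inductions driven by the recurrences (\ref{e2.3}) and (\ref{e2.4}). (Your ``secondary induction on $i_l$'' is in fact unnecessary---the identity $\mathcal{I}\bigl(\partial_l\,\mathscr{K}_{(i_l-1)e_l,0}(\mathcal{I})\bigr)=\mathscr{K}_{i_l e_l,0}(\mathcal{I})$ is a single application of (\ref{e2.3}) in the pure $l$-direction, where the first summand $\mathscr{K}_{\sum_{p\neq l}i_pe_p,0}$ is vacuous.)

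For (3) your reduction ``apply (1) to the $I$-index, then (2) to the $J$-index'' is the right idea, but the phrase ``the associated expansions commute'' is misleading: Remark~\ref{re2} already shows $\mathscr{K}_{0+e_l,e_k}(\mathcal{I})\neq\mathscr{K}_{e_l,0+e_k}(\mathcal{I})$ in general, so the two recurrences do \emph{not} commute. What you actually need---and what your last paragraph gestures at but does not carry out---is that the proof of (1) goes through verbatim with $0$ replaced by an arbitrary fixed $J$, since (\ref{e2.3}) leaves the second index untouched; and dually for (2). The paper makes this rigorous by running a direct simultaneous induction on $(I,J)$, with explicit base-case computations for $I=e_{p_1}+e_{p_2}$, $J=e_{q_1}+e_{q_2}$ in all four subcases ($p_1=p_2$ or not, $q_1=q_2$ or not), and then the inductive step
\[
\mathscr{K}_{I+e_l,J}(\mathcal{I})=\sum_{i_p,j_q\neq0\atop p\neq l}\mathscr{K}_{i_pe_p,j_qe_q}(\mathcal{I})+\sum_{j_q\neq0}\mathscr{K}_{(i_l+1)e_l,j_qe_q}(\mathcal{I}).
\]
Your approach reaches the same computation but packages it as ``generalized (1) then (2)'' rather than as a single double induction; once the commuting claim is replaced by the correct statement about $J$ being inert under (\ref{e2.3}), the two arguments coincide.
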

\begin{proof}
Remark \ref{re2} shows that the covariant derivative $\mathscr{K}_{I,J}(\mathcal{I}),I,J\in\mathbb{Z}_{+}^{m}\setminus\{0\},$ of the curvature $\mathscr{K}(\mathcal{I})$ depends on the order of $I$ and $J$.
If $I=e_{p_{1}}+e_{p_{2}}, 1\leq p_{1}\neq p_{2}\leq m,$ and the order of $\mathscr{K}_{I,0}(\mathcal{I})$ is first $\lambda_{p_{1}}$ followed by $\overline{\lambda}_{p_{2}}$,
then from (\ref{e2.3}),
$$\mathscr{K}_{e_{p_{1}}+e_{p_{2}},0}(\mathcal{I})=\mathscr{K}_{e_{p_{1}},0}(\mathcal{I})+
\mathcal{I}(\partial_{p_{2}}(\mathscr{K}(\mathcal{I})))=
\mathscr{K}_{e_{p_{1}},0}(\mathcal{I})+\mathscr{K}_{e_{p_{2}},0}(\mathcal{I}).$$
Without losing generality, assume that $K\in\mathbb{Z}_{+}^{m}$ and $\mathscr{K}_{I,0}(\mathcal{I})=\sum\limits_{i_{p}\neq0}\mathscr{K}_{i_{p}e_{p},0}(\mathcal{I})$
for all $0<I\leq K$.
For $\mathscr{K}_{I+e_{l},0}(\mathcal{I})$, where $1\leq l\leq m$ and $I=\sum\limits_{p=1}^{m}i_{p}e_{p}$, we know that
$$ \mathscr{K}_{I+e_{l},0}(\mathcal{I})=\mathscr{K}_{\sum\limits_{i_{p}\neq0\atop p\neq l}i_{p}e_{p},0}(\mathcal{I})+\mathcal{I}(\partial_{l}(\mathscr{K}_{i_{l}e_{l},0}(\mathcal{I})))=\sum\limits_{i_{p}\neq0\atop p\neq l}\mathscr{K}_{i_{p}e_{p},0}(\mathcal{I})+\mathscr{K}_{(i_{l}+1)e_{l},0}(\mathcal{I}).$$
This means that
\begin{equation}\label{eq2}
\mathscr{K}_{I,0}(\mathcal{I})=\sum\limits_{i_{p}\neq0}\mathscr{K}_{i_{p}e_{p},0}(\mathcal{I}),\quad I=\sum\limits_{p=1}^{m}i_{p}e_{p}\in\mathbb{Z}_{+}^{m}\setminus\{0\}.
\end{equation}
Similarly, we have that $\mathscr{K}_{0,J}(\mathcal{I})=\sum\limits_{j_{q}\neq0}\mathscr{K}_{j_{q}e_{q},0}(\mathcal{I})$ for any $J=\sum\limits_{p=1}^{m}j_{p}e_{p}\in\mathbb{Z}_{+}^{m}\setminus\{0\}.$
For property $(3)$, letting $I:=e_{p_{1}}+e_{p_{2}}$ and $J:=e_{q_{1}}+e_{q_{2}}$, where $1\leq p_{1},p_{2},q_{1},q_{2}\leq m$. Assume that the derivative order of the covariant derivative $\mathscr{K}_{I,J}(\mathcal{I})$ is $\lambda_{p_{1}}, \overline{\lambda}_{q_{1}}, \lambda_{p_{2}}, \overline{\lambda}_{q_{2}}$. Therefore, from Definition \ref{D22}, we have that
$$\mathscr{K}_{0+e_{p_{1}},0}(\mathcal{I})=
\mathcal{I}\partial_{p_{1}}(\mathscr{K}(\mathcal{I}))\quad\text{and}\quad\mathscr{K}_{e_{p_{1}},0+e_{q_{1}}}(\mathcal{I})=(\overline{\partial}_{q_{1}}
(\mathscr{K}_{e_{p_{1}},0}(\mathcal{I})))\mathcal{I}.$$
If $p_{1}\neq p_{2}$, we have that
\begin{equation}\label{eq3}
\mathscr{K}_{e_{p_{1}}+e_{p_{2}},e_{q_{1}}}(\mathcal{I})=
\mathscr{K}_{e_{p_{1}},e_{q_{1}}}(\mathcal{I})+
\mathcal{I}(\partial_{p_{2}}(\mathscr{K}_{0,e_{q_{1}}}(\mathcal{I})))
=\mathscr{K}_{e_{p_{1}},e_{q_{1}}}(\mathcal{I})+\mathscr{K}_{e_{p_{2}},e_{q_{1}}}(\mathcal{I}),
\end{equation}
then
\begin{eqnarray*}
\mathscr{K}_{I,e_{q_{1}}+e_{q_{2}}}(\mathcal{I})&=&\mathscr{K}_{I,e_{q_{1}}}(\mathcal{I})+
(\overline{\partial}_{q_{2}}(\mathscr{K}_{I,0}(\mathcal{I})))\mathcal{I}\\[4pt]\nonumber
&=&\mathscr{K}_{e_{p_{1}},e_{q_{1}}}(\mathcal{I})+\mathscr{K}_{e_{p_{2}},e_{q_{1}}}(\mathcal{I})+
(\overline{\partial}_{q_{2}}(\mathscr{K}_{e_{p_{1}},0}(\mathcal{I})+
\mathscr{K}_{e_{p_{2}},0}(\mathcal{I})))\mathcal{I}\\[4pt]\nonumber
&=&\mathscr{K}_{e_{p_{1}},e_{q_{1}}}(\mathcal{I})+\mathscr{K}_{e_{p_{2}},e_{q_{1}}}(\mathcal{I})+\mathscr{K}_{e_{p_{1}},e_{q_{2}}}(\mathcal{I})+
\mathscr{K}_{e_{p_{2}},e_{q_{2}}}(\mathcal{I})\nonumber
\end{eqnarray*}
for $q_{1}\neq q_{2}$, and
\begin{eqnarray*}
&&\mathscr{K}_{I,e_{q_{1}}+e_{q_{2}}}(\mathcal{I})\\[4pt]\nonumber
&=&(\overline{\partial}_{q_{2}}(\mathscr{K}_{e_{p_{1}}+e_{p_{2}},e_{q_{1}}}(\mathcal{I})))\mathcal{I}\\[4pt]\nonumber
&=&(\overline{\partial}_{q_{2}}(\mathscr{K}_{e_{p_{1}},e_{q_{1}}}(\mathcal{I})
+\mathscr{K}_{e_{p_{2}},e_{q_{1}}}(\mathcal{I})))\mathcal{I}\\[4pt]\nonumber
&=&\mathscr{K}_{e_{p_{1}},2e_{q_{1}}}(\mathcal{I})+\mathscr{K}_{e_{p_{2}},2e_{q_{1}}}(\mathcal{I})\nonumber
\end{eqnarray*}
for $q_{1}=q_{2}$.
If $p_{1}= p_{2}$, we have $\mathscr{K}_{e_{p_{1}}+e_{p_{2}},e_{q_{1}}}(\mathcal{I})=
\mathscr{K}_{2e_{p_{1}},e_{q_{1}}}(\mathcal{I})$, then
$$\mathscr{K}_{I,e_{q_{1}}+e_{q_{2}}}(\mathcal{I})=\mathscr{K}_{I,e_{q_{1}}}(\mathcal{I})+
(\overline{\partial}_{q_{2}}(\mathscr{K}_{I,0}(\mathcal{I})))\mathcal{I}=\mathscr{K}_{2e_{p_{1}},e_{q_{1}}}(\mathcal{I})+\mathscr{K}_{2e_{p_{1}},e_{q_{2}}}(\mathcal{I})$$
for $q_{1}\neq q_{2}$, and
$$\mathscr{K}_{I,e_{q_{1}}+e_{q_{2}}}(\mathcal{I})=
(\overline{\partial}_{q_{2}}(\mathscr{K}_{e_{p_{1}}+e_{p_{2}},e_{q_{1}}}(\mathcal{I})))\mathcal{I}=
(\overline{\partial}_{q_{2}}(\mathscr{K}_{2e_{p_{1}},e_{q_{1}}}(\mathcal{I})))\mathcal{I}
=\mathscr{K}_{2e_{p_{1}},2e_{q_{1}}}(\mathcal{I})$$
for $q_{1}=q_{2}$. Without losing generality, assume that for some $K\in\mathbb{Z}_{+}^{m}\setminus\{0\}$, there is
$$\mathscr{K}_{I,J}(\mathcal{I})=\sum\limits_{i_{p},j_{q}\neq 0}
  \mathscr{K}_{i_{p}e_{p},j_{q}e_{q}}(\mathcal{I}),\quad 0\leq I,J\leq K,$$
where the right derivative order of the equation follows the left derivative order.
Now we just have to prove that there is property (3) for $\mathscr{K}_{I+e_{l},J}(\mathcal{I})$ and $\mathscr{K}_{I,J+e_{k}}(\mathcal{I})$, $1\leq l,k\leq m.$
Since
\begin{eqnarray*}
\mathscr{K}_{I+e_{l},J}(\mathcal{I})&=&\mathscr{K}_{\sum\limits_{i_{p}\neq0\atop p\neq l}i_{p}e_{p},J}(\mathcal{I})+\mathcal{I}(\partial_{l}(\mathscr{K}_{i_{l}e_{l},J}(\mathcal{I})))\\ \nonumber
&=&\sum_{i_{p},j_{q}\neq0\atop p\neq l}\mathscr{K}_{i_{p}e_{p},j_{q}e_{q}}(\mathcal{I})
+\mathcal{I}\left(\partial_{l}\left(\sum_{j_{q}\neq0}\mathscr{K}_{i_{l}e_{l},j_{q}e_{q}}(\mathcal{I})\right)\right)\\ \nonumber
&=&\sum_{i_{p},j_{q}\neq0\atop p\neq l}\mathscr{K}_{i_{p}e_{p},j_{q}e_{q}}(\mathcal{I})
+\sum_{j_{q}\neq0}\mathscr{K}_{(i_{l}+1)e_{l},j_{q}e_{q}}(\mathcal{I})\nonumber
\end{eqnarray*}
and
\begin{eqnarray*}
\mathscr{K}_{I,J+e_{k}}(\mathcal{I})&=&\mathscr{K}_{I,\sum\limits_{j_{q}\neq0\atop q\neq k}j_{q}e_{q}}(\mathcal{I})
+(\overline{\partial}_{k}(\mathscr{K}_{I,j_{k}e_{k}}(\mathcal{I})))\mathcal{I}\\ \nonumber
&=&\sum_{i_{p},j_{q}\neq0\atop q\neq k}\mathscr{K}_{i_{p}e_{p},j_{q}e_{q}}(\mathcal{I})
+\left(\overline{\partial}_{k}\left(\sum_{i_{p}\neq0}\mathscr{K}_{i_{p}e_{p},j_{k}e_{k}}(\mathcal{I})\right)\right)\mathcal{I}\\ \nonumber
&=&\sum_{i_{p},j_{q}\neq0\atop q\neq k}\mathscr{K}_{i_{p}e_{p},j_{q}e_{q}}(\mathcal{I})
+\sum_{i_{p}\neq0}\mathscr{K}_{i_{p}e_{p},(j_{k}+1)e_{k}}(\mathcal{I}).\nonumber
\end{eqnarray*}
This completes the proof.
\end{proof}

Similar to the proof of Lemma \ref{D3}, for the covariant derivative of classical curvature we can immediately prove the following result.

\begin{lemma}\label{lm0}
Let $\mathcal{I}\in\mathcal{I}_{n}(\Omega,\mathcal{U})$. Then the covariant derivative $\mathcal{K}_{\mathcal{I}, \lambda^{I},\overline{\lambda}^{J}}$ of classical curvature $\mathcal{K}_{\mathcal{I}}$ has the following properties:
\begin{itemize}
  \item [(1)]$\mathcal{K}_{\mathcal{I}, \lambda^{I}}=\sum\limits_{i_{p}\neq 0}\mathcal{K}_{\mathcal{I}, \lambda^{i_{p}e_{p}}}$ for any $I\in\mathbb{Z}_{+}^{m}\setminus\{0\}$;
  \item [(2)]$\mathcal{K}_{\mathcal{I},\overline{\lambda}^{J}}=\sum\limits_{j_{q}\neq 0}\mathcal{K}_{\mathcal{I}, \overline{\lambda}^{j_{q}e_{q}}}$ for any $J\in\mathbb{Z}_{+}^{m}\setminus\{0\}$;
  \item [(3)] $\mathcal{K}_{\mathcal{I}, \lambda^{I},\overline{\lambda}^{J}}=\sum\limits_{i_{p},j_{q}\neq 0}
  \mathcal{K}_{\mathcal{I}, \lambda^{i_{p}e_{p}},\overline{\lambda}^{j_{q}e_{q}}}$, where the right derivative order of the equation follows the left derivative order and $I,J\in\mathbb{Z}_{+}^{m}\setminus\{0\}.$
\end{itemize}
\end{lemma}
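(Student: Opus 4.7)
The plan is to mirror the proof of Lemma \ref{D3}, transposing the recursive rules (\ref{e2.3})--(\ref{e2.4}) to the classical recursion built from Definition \ref{def2.2} by iteration. The only substantive difference is that the classical recursion adds a commutator correction $[H^{-1}\partial_{l}H,\,\cdot\,]$ in the holomorphic direction rather than pre- or post-multiplying by $\mathcal{I}$, but this difference does not affect the combinatorial bookkeeping because the commutator bracket is bilinear and the antiholomorphic covariant derivative is just $\overline{\partial}_{k}$, with no correction term at all.

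I would run a single induction on $|I|$ for part (1), symmetrically on $|J|$ for part (2), and a combined double induction on $(|I|,|J|)$ for part (3). For part (1), the base case $|I|=1$ is vacuous since $I=e_{p}$ already has the form $i_{p}e_{p}$. For the induction step, assuming the decomposition for all multi-indices $\leq K$ in the coordinatewise order, the iterated version of Definition \ref{def2.2} reads
\[
\mathcal{K}_{\mathcal{I},\lambda^{I+e_{l}}} \;=\; \frac{\partial}{\partial\lambda_{l}}\bigl(\mathcal{K}_{\mathcal{I},\lambda^{I}}\bigr) + \bigl[H^{-1}\tfrac{\partial}{\partial\lambda_{l}}H,\, \mathcal{K}_{\mathcal{I},\lambda^{I}}\bigr].
\]
Substituting $\mathcal{K}_{\mathcal{I},\lambda^{I}} = \sum_{i_{p}\neq 0} \mathcal{K}_{\mathcal{I},\lambda^{i_{p}e_{p}}}$ from the induction hypothesis and invoking linearity of both $\partial_{l}$ and of the commutator, the cross-terms with $p\neq l$ reproduce $\mathcal{K}_{\mathcal{I},\lambda^{i_{p}e_{p}}}$ unchanged, while the $p=l$ term collapses to $\mathcal{K}_{\mathcal{I},\lambda^{(i_{l}+1)e_{l}}}$. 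This yields the desired decomposition for $I+e_{l}$ and is the classical analogue of the computation producing equation (\ref{eq2}).

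Part (2) is strictly easier since there is no commutator correction; the induction step reduces to pulling $\overline{\partial}_{k}$ through the finite sum. For part (3), I would alternate between the two recursions according to whether the next differentiation is in $\lambda_{l}$ or $\overline{\lambda}_{k}$. In the $\lambda_{l}$ case the commutator manipulation from part (1) applies summand-by-summand to the induction hypothesis; in the $\overline{\lambda}_{k}$ case linearity of $\overline{\partial}_{k}$ distributes across the sum. The four-way case analysis ($p_{1}\neq p_{2}$ versus $p_{1}=p_{2}$, and $q_{1}\neq q_{2}$ versus $q_{1}=q_{2}$) in the proof of Lemma \ref{D3} transfers verbatim, with the pre/post factors of $\mathcal{I}$ replaced by the commutator bracket where appropriate.

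The only real obstacle is ensuring that the ordering convention---that the right-hand derivative order must follow the left-hand derivative order---is respected throughout, since the covariant derivative $\mathcal{K}_{\mathcal{I},\lambda^{I},\overline{\lambda}^{J}}$ genuinely depends on this order (as already noted for $\mathcal{K}_{\mathcal{I}_{\mathbf{T}}}$ in the paragraph after Definition \ref{def2.2}). This is handled automatically by applying the recursion step-by-step in the induction rather than invoking any symmetry between $\lambda$- and $\overline{\lambda}$-derivatives. Once the bookkeeping is set up to parallel Lemma \ref{D3} exactly, the verification becomes a transcription exercise.
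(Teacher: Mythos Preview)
Your high-level plan---mirror Lemma~\ref{D3} by transposing the recursive rules (\ref{e2.3})--(\ref{e2.4}) to the classical setting---is exactly what the paper does, and with that plan the proof is indeed a transcription exercise. But the recursion you actually wrote down in the induction step is \emph{not} the transposition of (\ref{e2.3}). The classical analogue of (\ref{e2.3}) is
\[
\mathcal{K}_{\mathcal{I},\lambda^{I+e_{l}}}
\;=\;
\mathcal{K}_{\mathcal{I},\lambda^{\sum_{p\neq l} i_{p}e_{p}}}
\;+\;
\Bigl(\partial_{l}\bigl(\mathcal{K}_{\mathcal{I},\lambda^{i_{l}e_{l}}}\bigr)
+\bigl[H^{-1}\partial_{l}H,\,\mathcal{K}_{\mathcal{I},\lambda^{i_{l}e_{l}}}\bigr]\Bigr),
\]
which already separates off the directions $p\neq l$ before applying the $\lambda_{l}$--covariant derivative to the $e_{l}$--part only. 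What you wrote instead is the naive iteration
$\mathcal{K}_{\mathcal{I},\lambda^{I+e_{l}}}
=\partial_{l}\bigl(\mathcal{K}_{\mathcal{I},\lambda^{I}}\bigr)
+[H^{-1}\partial_{l}H,\,\mathcal{K}_{\mathcal{I},\lambda^{I}}]$,
applying the covariant derivative to the full $\mathcal{K}_{\mathcal{I},\lambda^{I}}$.

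With your recursion, the claim that ``the cross-terms with $p\neq l$ reproduce $\mathcal{K}_{\mathcal{I},\lambda^{i_{p}e_{p}}}$ unchanged'' is false: applying $\partial_{l}+[H^{-1}\partial_{l}H,\,\cdot\,]$ to $\mathcal{K}_{\mathcal{I},\lambda^{i_{p}e_{p}}}$ for $p\neq l$ yields a genuinely new object (a mixed covariant derivative in the $p$- and $l$-directions), not the original summand. Indeed, if the naive recursion were the definition, part~(1) would assert $\nabla_{l}\nabla_{p}\mathcal{K}_{\mathcal{I}}=\nabla_{l}\mathcal{K}_{\mathcal{I}}+\nabla_{p}\mathcal{K}_{\mathcal{I}}$ for $I=e_{l}+e_{p}$, which fails. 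Once you replace your displayed recursion by the correct transposition of (\ref{e2.3})--(\ref{e2.4}), the induction step becomes the one-line computation carried out for $\mathscr{K}$ in the proof of Lemma~\ref{D3}, and your remaining outline (including the treatment of part~(3) and the ordering caveat) goes through unchanged.
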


The following theorem is one of our main results, which reveals the relationship between the curvature and covariant derivatives and classical curvature and covariant derivatives of extended holomorphic curves in the multivariate case on $C^{*}$-algebras.

\begin{thm}\label{thm1}
Let $\mathcal{I}\in\mathcal{I}_{n}(\Omega,\mathcal{U})$ and $\mathcal{I}(\lambda)=F(\lambda)(G^{*}(\lambda)F(\lambda))^{-1}G^{*}(\lambda)$. Then
$$\mathscr{K}_{I,J}(\mathcal{I})(\lambda)F(\lambda)+
F(\lambda)\mathcal{K}_{\mathcal{I},\lambda^{I},\overline{\lambda}^{J}}(\lambda)=0,
\quad\lambda\in\Omega,\,\, I,J\in \mathbb{Z}_{+}^{m},$$
where $F:\Omega\rightarrow \mathscr{L}(\mathcal{H},l^{2}(\mathbb{N},B))$ is an operator valued function.
\end{thm}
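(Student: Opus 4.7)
The plan is to reduce first to the monomial case $I = ie_{p}$, $J = je_{q}$ by Lemmas \ref{D3} and \ref{lm0} (both sides of the identity decompose bilinearly into such monomials), and then to proceed by induction on $i+j$ via a ``frame transfer'' map that converts the algebra-side covariant derivatives of Definition \ref{D22} into the classical matrix-side ones of Definition \ref{def2.2}.

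The technical heart is the following auxiliary structural claim, proved by joint induction on $|I|+|J|$: for every $I,J \in \mathbb{Z}_{+}^{m}$, writing $B := \mathscr{K}_{I,J}(\mathcal{I})$, one has $\mathcal{I}B = B = B\mathcal{I}$ and, moreover, $B \cdot \partial_{r}\mathcal{I} = 0$ for every $r$. The base case $B = \mathscr{K}(\mathcal{I}) = \overline{\partial}\mathcal{I}\cdot\partial\mathcal{I}$ is immediate from the Cauchy--Riemann equations in Definition \ref{def2.1} together with the key identity $\partial_{i}\mathcal{I}\cdot\partial_{r}\mathcal{I} = 0$ (itself a consequence of $G^{*}(1-\mathcal{I}) = G^{*} - G^{*}FH^{-1}G^{*} = 0$). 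In the inductive step, left-$\mathcal{I}$-absorption $\mathcal{I}B' = B'$ is straightforward for both operations $B \mapsto \mathcal{I}\partial_{p}B$ and $B \mapsto (\overline{\partial}_{k}B)\mathcal{I}$; right-$\mathcal{I}$-absorption in the $\mathcal{I}\partial_{p}$-recursion uses $(\partial_{p}B)\mathcal{I} = \partial_{p}(B\mathcal{I}) - B\partial_{p}\mathcal{I} = \partial_{p}B$ via the inductive hypothesis $B\partial_{p}\mathcal{I} = 0$; and the propagation of $B\partial_{r}\mathcal{I} = 0$ along $B \mapsto \mathcal{I}\partial_{p}B$ rests on $\mathcal{I}\partial_{p}\partial_{r}\mathcal{I} = -(\partial_{p}\mathcal{I})\partial_{r}\mathcal{I} = 0$, obtained by differentiating $\mathcal{I}\partial_{r}\mathcal{I} = 0$ and invoking $\partial_{p}\mathcal{I}\cdot\partial_{r}\mathcal{I} = 0$.

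Once this structural claim is in hand, any such $B$ admits a unique operator $\Psi(B) \in \mathscr{L}(\mathcal{H})$ satisfying $BF = F\Psi(B)$, given explicitly by $\Psi(B) = H^{-1}G^{*}BF$. A direct computation using $\mathcal{I}F = F$, $G^{*}\mathcal{I} = G^{*}$, $\overline{\partial}F = 0$, $\partial G^{*} = 0$, $G^{*}\partial_{p}F = \partial_{p}H$, and $(\overline{\partial}_{k}G^{*})F = \overline{\partial}_{k}H$ then yields the transfer identities
\begin{equation*}
\Psi(\mathcal{I}\partial_{p}B) = \partial_{p}\Psi(B) + [H^{-1}\partial_{p}H,\,\Psi(B)], \qquad \Psi\bigl((\overline{\partial}_{k}B)\mathcal{I}\bigr) = \overline{\partial}_{k}\Psi(B),
\end{equation*}
which are precisely the covariant derivative formulas in Definition \ref{def2.2}. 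For the base case $I=J=0$ I would verify $\Psi(\mathscr{K}(\mathcal{I})) = -\mathcal{K}_{\mathcal{I}}$ by substituting $\partial\mathcal{I} = (1-\mathcal{I})(\partial F)H^{-1}G^{*}$ and $\overline{\partial}\mathcal{I} = FH^{-1}(\overline{\partial}G^{*})(1-\mathcal{I})$ into $\mathscr{K}(\mathcal{I})F = \overline{\partial}\mathcal{I}\cdot\partial F$ and simplifying, via $G^{*}\partial F = \partial H$ and $\overline{\partial}\partial H = (\overline{\partial}G^{*})\partial F$, to $F\cdot\overline{\partial}(H^{-1}\partial H) = -F\mathcal{K}_{\mathcal{I}}$. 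The monomial recursions (\ref{e2.3}) and (\ref{e2.4}) then propagate $\Psi(\mathscr{K}_{I,J}(\mathcal{I})) = -\mathcal{K}_{\mathcal{I},\lambda^{I},\overline{\lambda}^{J}}$ by induction on $i+j$, which is the theorem. The main obstacle I anticipate is the auxiliary structural lemma---in particular, closing the propagation of $B\partial_{r}\mathcal{I} = 0$ under the $\mathcal{I}\partial_{p}$-recursion, which genuinely requires the chain $\partial_{p}\mathcal{I}\cdot\partial_{r}\mathcal{I} = 0 \Rightarrow \mathcal{I}\partial_{p}\partial_{r}\mathcal{I} = 0$; everything else reduces to routine bookkeeping once the structural claim is secured.
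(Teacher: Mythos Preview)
Your proposal is correct and follows the same overall architecture as the paper---reduce to monomial indices via Lemmas~\ref{D3} and~\ref{lm0}, then induct---but the organization is genuinely different and, in fact, cleaner. The paper carries the explicit formula $\mathscr{K}_{I,J}(\mathcal{I}) = -F\,\mathcal{K}_{\mathcal{I},\lambda^{I},\overline{\lambda}^{J}}\,H^{-1}G^{*}$ through the induction, which automatically supplies the absorption properties $\mathcal{I}B=B=B\mathcal{I}$ at each step; the price is that the low-order cases (especially $I=e_{l}$, $J=e_{k}$) are established by lengthy explicit expansions before the general inductive step is stated. Your approach separates concerns: you isolate the structural invariants $\mathcal{I}B=B=B\mathcal{I}$ and $B\,\partial_{r}\mathcal{I}=0$ into a standalone inductive lemma, then define the transfer map $\Psi(B)=H^{-1}G^{*}BF$ and prove once that $\Psi$ intertwines the two recursion operations with the classical covariant-derivative formulas. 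This collapses all the case-by-case computations in the paper's proof into two short transfer identities, and the induction becomes formal. The extra ingredient you need---closing $B\,\partial_{r}\mathcal{I}=0$ under $B\mapsto\mathcal{I}\partial_{p}B$ via $\mathcal{I}\,\partial_{p}\partial_{r}\mathcal{I}=-(\partial_{p}\mathcal{I})(\partial_{r}\mathcal{I})=0$---is not explicitly isolated in the paper because the stronger explicit form it carries makes it unnecessary. Note also that your $\Psi$-relation $BF=F\Psi(B)$ combined with $B=B\mathcal{I}$ recovers the paper's stronger form $B=-F\,\mathcal{K}_{\mathcal{I},\lambda^{I},\overline{\lambda}^{J}}\,H^{-1}G^{*}$, which is used in the subsequent corollaries.
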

\begin{proof}
Letting $H(\lambda)=G^{*}(\lambda)F(\lambda),$ we have that
\begin{eqnarray}\label{eq4}
\overline{\partial}\mathcal{I}\partial\mathcal{I}&=&\overline{\partial}(FH^{-1}G^{*})\partial(FH^{-1}G^{*})\\[4pt]\nonumber
&=&F(\overline{\partial}H^{-1} G^{*}+H^{-1}\overline{\partial} G^{*})(\partial F H^{-1}+ F\partial H^{-1})G^{*}\\[4pt]\nonumber
&=&F(\overline{\partial}H^{-1}\partial H H^{-1}+\overline{\partial}H^{-1} H \partial H^{-1}+
H^{-1}\overline{\partial}\partial H H^{-1}+H^{-1}\overline{\partial} H \partial H^{-1})G^{*}\\[4pt]\nonumber
&=&F\overline{\partial}(H^{-1}\partial H)H^{-1}G^{*}.\nonumber
\end{eqnarray}
From (\ref{e0}),(\ref{eq1}) and (\ref{eq4}), we have $\mathscr{K}(\mathcal{I})= F(-\mathcal{K}_{\mathcal{I}})H^{-1} G^{*}$, and then
\begin{equation}\label{e1}
\mathscr{K}(\mathcal{I})F+F\mathcal{K}_{\mathcal{I}}=0.
\end{equation}
Since $\mathcal{I}$ is an extended holomorphic curve, from Definition \ref{def2.1}, we obtain that $\partial_{l}\overline{\partial}\mathcal{I}=\partial_{l}\mathcal{I}\overline{\partial}
\mathcal{I}-\overline{\partial}\mathcal{I}\partial_{l}\mathcal{I}$, $$\mathscr{K}_{e_{l},0}(\mathcal{I})=\mathcal{I}(\partial_{l}(\mathscr{K}(\mathcal{I})))
=\mathcal{I}(\partial_{l}\overline{\partial}\mathcal{I}\partial\mathcal{I}
+\overline{\partial}\mathcal{I}\partial_{l}\partial\mathcal{I})=
\mathcal{I}(\partial_{l}\mathcal{I}\overline{\partial}\mathcal{I}\partial\mathcal{I}
-\overline{\partial}\mathcal{I}\partial_{l}\mathcal{I}\partial\mathcal{I})
+\mathcal{I}\overline{\partial}\mathcal{I}\partial_{l}\partial\mathcal{I}=
\overline{\partial}\mathcal{I}\partial_{l}\partial\mathcal{I},$$
and
\begin{equation}
  \begin{aligned}
\overline{\partial}\mathcal{I}\partial_{l}\partial\mathcal{I}
&=\overline{\partial}( F H ^{-1} G^{*})\partial_{l}(\partial( F H ^{-1} G^{*}))\\
&= F(\overline{\partial}H^{-1} G^{*}+H^{-1}\overline{\partial} G^{*})
(\partial\partial_{l} F H^{-1}+\partial F\partial_{l}H^{-1}+\partial_{l} F\partial H^{-1}+ F\partial\partial_{l}H^{-1}) G^{*}
\\&= F(\overline{\partial}H^{-1}\partial\partial_{l}H+\overline{\partial}H^{-1}\partial H\partial_{l}H^{-1}H+\overline{\partial}H^{-1}\partial_{l}H\partial H^{-1}H\\
&\quad+H^{-1}\overline{\partial}\partial\partial_{l}H+H^{-1}\overline{\partial}\partial H\partial_{l}H^{-1}H+H^{-1}\overline{\partial}\partial_{l}H\partial H^{-1}H)H^{-1} G^{*}\\
&= F(-\partial_{l}(\mathcal{K}_{\mathcal{I}})+\overline{\partial} H^{-1}\partial H\partial_{l}H^{-1}H+H^{-1}\overline{\partial}\partial H\partial_{l}H^{-1}H-\partial_{l}H^{-1}\partial\overline{\partial} H-\partial_{l}H^{-1}\overline{\partial} H\partial H^{-1}H)H^{-1} G^{*}\\
&= F(-\partial_{l}(\mathcal{K}_{\mathcal{I}})-(\overline{\partial} H^{-1}\partial H+H^{-1}\partial\overline{\partial} H)H^{-1}\partial_{l}H+H^{-1}\partial_{l}H(H^{-1}\partial\overline{\partial} H+\overline{\partial} H^{-1}\partial H))H^{-1} G^{*}\\
&= F(-\partial_{l}(\mathcal{K}_{\mathcal{I}})-[H^{-1}\partial_{l}H,\mathcal{K}_{\mathcal{I}}])H^{-1} G^{*}\\
&= F(-\mathcal{K}_{\mathcal{I},\lambda^{e_{l}}})H^{-1} G^{*}
\nonumber
  \end{aligned}
\end{equation}
for any $1\leq l\leq m$. Thus,
\begin{equation}
  \begin{aligned}\label{e2}
\mathscr{K}_{e_{l},0}(\mathcal{I})F+ F\mathcal{K}_{\mathcal{I},\lambda^{e_{l}}}=0,\quad 1\leq l\leq m.
  \end{aligned}
\end{equation}
Similarly,
\begin{equation}
  \begin{aligned}\label{e3}
\mathscr{K}_{0,e_{k}}(\mathcal{I})F+F\mathcal{K}_{\mathcal{I},\overline{\lambda}^{e_{k}}}=0,\quad 1\leq k\leq m.
   \end{aligned}
\end{equation}
Since the covariant derivatives of the curvature depends on the order of $I$ and $J$,
for $\mathscr{K}_{e_{l},e_{k}}(\mathcal{I})=\mathcal{I}(\partial_{l}(\mathscr{K}_{0,e_{k}}(\mathcal{I}))), 1\leq l,k\leq m,$
\begin{eqnarray}\label{eq5}
\mathscr{K}_{e_{l},e_{k}}(\mathcal{I})&=&\mathcal{I}(\partial_{l}(\mathscr{K}_{0,e_{k}}(\mathcal{I})))\\ \nonumber
&=&\mathcal{I}(\partial_{l}(\overline{\partial}_{k}\overline{\partial}\mathcal{I}
\partial\mathcal{I}))\\ \nonumber
&=&\mathcal{I}(\partial_{l}\overline{\partial}_{k}\overline{\partial}
\mathcal{I}\partial\mathcal{I}+\overline{\partial}_{k}\overline{\partial}\mathcal{I}
\partial_{l}\partial\mathcal{I})\\ \nonumber
&=&\mathcal{I}(\overline{\partial}(\partial_{l}\mathcal{I}\overline{\partial}_{k}\mathcal{I}
-\overline{\partial}_{k}\mathcal{I}\partial_{l}\mathcal{I}))\partial\mathcal{I}
+\overline{\partial}_{k}\overline{\partial}\mathcal{I}\partial_{l}\partial\mathcal{I}\\ \nonumber
&=&-\overline{\partial}\mathcal{I}\partial_{l}\mathcal{I}\overline{\partial}_{k}\mathcal{I}
\partial\mathcal{I}-\overline{\partial}_{k}\mathcal{I}\partial_{l}\mathcal{I}
\overline{\partial}\mathcal{I}\partial\mathcal{I}+\overline{\partial}_{k}\overline{\partial}
\mathcal{I}\partial_{l}\partial\mathcal{I}.
\nonumber
\end{eqnarray}
Note that
$-\mathcal{K}_{\mathcal{I},\overline{\lambda}^{e_{k}}}=\overline{\partial}_{k}(\overline{\partial}(H^{-1}\partial H))=\overline{\partial}_{k}\overline{\partial}H^{-1}\partial H+\overline{\partial}H^{-1}\partial\overline{\partial}_{k}H+
\overline{\partial}_{k}H^{-1}\partial\overline{\partial}H+H^{-1}\partial\overline{\partial}_{k}
\overline{\partial}H,$
then
\begin{equation*}
 \begin{aligned}\label{e222}
&\quad-(\mathcal{K}_{\mathcal{I},\overline{\lambda}^{e_{k}}})_{\lambda^{e_{l}}}\\
&=-\left(\partial_{l}(\mathcal{K}_{\mathcal{I},\overline{\lambda}^{e_{k}}})+
[H^{-1}\partial_{l}H,\mathcal{K}_{\mathcal{I},\overline{\lambda}^{e_{k}}}]\right)\\
&=\partial_{l}(\overline{\partial}_{k}\overline{\partial}H^{-1}\partial H+\overline{\partial}H^{-1}\partial\overline{\partial}_{k}H+\overline{\partial}_{k}H^{-1}\partial\overline{\partial}H+H^{-1}\partial\overline{\partial}_{k}\overline{\partial}H
)+H^{-1}\partial_{l}H(\overline{\partial}_{k}\overline{\partial}H^{-1}\partial H+\overline{\partial}H^{-1}\partial\overline{\partial}_{k}H\\
&\quad+\overline{\partial}_{k}H^{-1}\partial\overline{\partial}H+H^{-1}
\partial\overline{\partial}_{k}\overline{\partial}H)-(\overline{\partial}_{k}\overline{\partial}H^{-1}\partial H+\overline{\partial}H^{-1}\partial\overline{\partial}_{k}H+\overline{\partial}_{k}H^{-1}\partial\overline{\partial}H+H^{-1}\partial\overline{\partial}_{k}\overline{\partial}H)H^{-1}\partial_{l}H\\
&=\partial_{l}\overline{\partial}_{k}\overline{\partial} H^{-1}\partial H+\overline{\partial}_{k}\overline{\partial} H^{-1}\partial_{l}\partial H+\partial_{l}\overline{\partial} H^{-1}\partial\overline{\partial}_{k}H+\overline{\partial} H^{-1}\partial_{l}\partial\overline{\partial}_{k}H+\partial_{l}\overline{\partial}_{k}H^{-1}\partial\overline{\partial} H+\overline{\partial}_{k}H^{-1}\partial_{l}\partial\overline{\partial} H\\
&\quad+H^{-1}\partial_{l}\partial\overline{\partial}_{k}\overline{\partial} H+H^{-1}\partial_{l}H\overline{\partial}_{k}\overline{\partial} H^{-1}\partial H+H^{-1}\partial_{l}H\overline{\partial} H^{-1}\partial\overline{\partial}_{k}H+H^{-1}\partial_{l}H\overline{\partial}_{k}H^{-1}\partial\overline{\partial} H\\
&\quad-\overline{\partial}_{k}\overline{\partial} H^{-1}\partial HH^{-1}\partial_{l}H-\overline{\partial} H^{-1}\partial\overline{\partial}_{k}HH^{-1}\partial_{l}H-\overline{\partial}_{k}H^{-1}\partial\overline{\partial} HH^{-1}\partial_{l}H-H^{-1}\partial\overline{\partial}_{k}\overline{\partial} HH^{-1}\partial_{l}H
 \end{aligned}
\end{equation*}
and
\begin{eqnarray}\label{eq6}
&&\quad\quad F(-(\mathcal{K}_{\mathcal{I},\overline{\lambda}^{e_{k}}})_{\lambda^{e_{l}}})H^{-1} G^{*}\\ \nonumber
&=&\overline{\partial}_{k}\overline{\partial}\mathcal{I}\partial_{l}\partial\mathcal{I}+ F(\partial_{l}\overline{\partial} H^{-1}\partial\overline{\partial}_{k}H+\partial_{l}\overline{\partial}_{k}H^{-1}\partial\overline{\partial} H+H^{-1}\partial_{l}H\overline{\partial} H^{-1}\partial\overline{\partial}_{k}H+H^{-1}\partial_{l}H\overline{\partial}_{k}H^{-1}\partial\overline{\partial} H\\ \nonumber
&&-\partial_{l}\overline{\partial}_{k}H^{-1}\overline{\partial} HH^{-1}\partial H-H^{-1}\partial_{l}\overline{\partial} H\overline{\partial}_{k}H^{-1}\partial H-\overline{\partial} H^{-1}\partial_{l}H\overline{\partial}_{k}H^{-1}\partial H+\partial_{l}H^{-1}\overline{\partial}_{k}H\overline{\partial} H^{-1}\partial H)H^{-1} G^{*}\\ \nonumber
&=&\overline{\partial}_{k}\overline{\partial}\mathcal{I}\partial_{l}\partial\mathcal{I}+ F(\partial_{l}\overline{\partial} H^{-1}\partial\overline{\partial}_{k}H+\partial_{l}\overline{\partial}_{k}H^{-1}\partial\overline{\partial} H+H^{-1}\partial_{l}H\overline{\partial} H^{-1}\partial\overline{\partial}_{k}H+H^{-1}\partial_{l}H\overline{\partial}_{k}H^{-1}\partial\overline{\partial} H\\ \nonumber
&&+\partial_{l}\overline{\partial}_{k}H^{-1}H\overline{\partial} H^{-1}\partial H+\partial_{l}H^{-1}\overline{\partial} H\overline{\partial}_{k}H^{-1}\partial H+\partial_{l}\overline{\partial} H^{-1}H\overline{\partial}_{k}H^{-1}\partial H+\partial_{l}H^{-1}\overline{\partial}_{k}H\overline{\partial} H^{-1}\partial H)H^{-1} G^{*}\\ \nonumber
&=&\overline{\partial}_{k}\overline{\partial}\mathcal{I}\partial_{l}\partial\mathcal{I}+ F(\overline{\partial}(\partial_{l}H^{-1}H)\overline{\partial}_{k}(H^{-1}\partial H)+\overline{\partial}_{k}(\partial_{l}H^{-1}H)\overline{\partial}(H^{-1}\partial H))H^{-1} G^{*}\\ \nonumber
&=&\overline{\partial}_{k}\overline{\partial}\mathcal{I}\partial_{l}\partial\mathcal{I}+ F(-\overline{\partial}(H^{-1}\partial_{l}H)\overline{\partial}_{k}(H^{-1}\partial H)-\overline{\partial}_{k}(H^{-1}\partial_{l}H)\overline{\partial}(H^{-1}\partial H))H^{-1} G^{*}\\ \nonumber
&=&\overline{\partial}_{k}\overline{\partial}\mathcal{I}\partial_{l}\partial\mathcal{I}
-\overline{\partial}\mathcal{I}\partial_{l}\mathcal{I}\overline{\partial}_{k}\mathcal{I}
\partial\mathcal{I}-\overline{\partial}_{k}\mathcal{I}\partial_{l}\mathcal{I}
\overline{\partial}\mathcal{I}\partial\mathcal{I}.
\nonumber
\end{eqnarray}
Therefore, from (\ref{eq5}) and (\ref{eq6}), we obtain $\mathcal{I}\left(\partial_{l}(\mathscr{K}_{0,e_{k}}(\mathcal{I}))\right)F+
F(\mathcal{K}_{\mathcal{I},\overline{\lambda}^{e_{k}}})_{\lambda^{e_{l}}}=0$. Similarly, we have
$(\overline{\partial}_{k}(\mathscr{K}_{e_{l},0}(\mathcal{I})))\mathcal{I}F+F(\mathcal{K}_{\mathcal{I},\lambda^{e_{l}}})_{\overline{\lambda}^{e_{k}}}=0.$
Thus,
\begin{equation}
 \begin{aligned}\label{e4}
 \mathscr{K}_{e_{l},e_{k}}(\mathcal{I})F+ F\mathcal{K}_{\mathcal{I},\lambda^{e_{l}},\overline{\lambda}^{e_{k}}}=0,\quad 1\leq l,k\leq m.
 \end{aligned}
\end{equation}
By (\ref{e1})-(\ref{e3}) and (\ref{e4}),
we have that $\mathscr{K}_{i_{l}e_{l},j_{k}e_{k}}(\mathcal{I})F+ F\mathcal{K}_{\mathcal{I},\lambda^{i_{l}e_{l}},\overline{\lambda}^{j_{k}e_{k}}}=0$ for all $0\leq i_{l},j_{k}\leq1.$
Without loss of generality, assume that
\begin{equation}\label{eq7}
\mathscr{K}_{i_{l}e_{l},j_{k}e_{k}}(\mathcal{I})F+ F\mathcal{K}_{\mathcal{I},\lambda^{i_{l}e_{l}},\overline{\lambda}^{j_{k}e_{k}}}=0
\end{equation}
holds for some $i_{l},j_{k}\geq 0$. Then
$\mathscr{K}_{i_{l}e_{l},j_{k}e_{k}}(\mathcal{I})=\mathscr{K}_{i_{l}e_{l},j_{k}e_{k}}(\mathcal{I}) \mathcal{I}$ and (\ref{eq7}) show that
\begin{equation}
\begin{aligned}
&\quad \mathscr{K}_{i_{l}e_{l},j_{k}e_{k}+e_{k}}(\mathcal{I})\\
&=(\overline{\partial}_{k}(\mathscr{K}_{i_{l}e_{l},j_{k}e_{k}}(\mathcal{I})))\mathcal{I}\\
&=\overline{\partial}_{k}(- F\mathcal{K}_{\mathcal{I},\lambda^{i_{l}e_{l}},\overline{\lambda}^{j_{k}e_{k}}}H^{-1} G^{*})\mathcal{I}\\
&=-F\left(\overline{\partial}_{k}\mathcal{K}_{\mathcal{I},\lambda^{i_{l}e_{l}},
\overline{\lambda}^{j_{k}e_{k}}}H^{-1} G^{*}+\mathcal{K}_{\mathcal{I},\lambda^{i_{l}e_{l}},\overline{\lambda}^{j_{k}e_{k}}}
\overline{\partial}_{k}H^{-1}G^{*}
+\mathcal{K}_{\mathcal{I},\lambda^{i_{l}e_{l}},\overline{\lambda}^{j_{k}e_{k}}}
H^{-1}\overline{\partial}_{k}G^{*}\right) F H^{-1} G^{*}\\
&= -F\left(\overline{\partial}_{k}\mathcal{K}_{\mathcal{I},\lambda^{i_{l}e_{l}},
\overline{\lambda}^{j_{k}e_{k}}}H^{-1}G^{*}
+\mathcal{K}_{\mathcal{I},\lambda^{i_{l}e_{l}},\overline{\lambda}^{j_{k}e_{k}}}
(\overline{\partial}_{k}H^{-1}+H^{-1}\overline{\partial}_{k}HH^{-1})G^{*}\right)\\
&=-F\mathcal{K}_{\mathcal{I},\lambda^{i_{l}e_{l}},\overline{\lambda}^{j_{k}e_{k}+e_{k}}}H^{-1}G^{*}
\nonumber
\end{aligned}
\end{equation}
and
\begin{equation}
\begin{aligned}
&\quad \mathscr{K}_{i_{l}e_{l}+e_{l},j_{k}e_{k}}(\mathcal{I})\\
&=\mathcal{I}(\partial_{l}(\mathscr{K}_{i_{l}e_{l},j_{k}e_{k}}(\mathcal{I})))\\
&=\mathcal{I}\left(\partial_{l}( -F\mathcal{K}_{\mathcal{I},\lambda^{i_{l}e_{l}},\overline{\lambda}^{j_{k}e_{k}}}H^{-1} G^{*})\right)\\
&= -F H^{-1} G^{*}\left(\partial_{l} F\mathcal{K}_{\mathcal{I},\lambda^{i_{l}e_{l}},\overline{\lambda}^{j_{k}e_{k}}}H^{-1}G^{*}
+F\partial_{l}\mathcal{K}_{\mathcal{I},\lambda^{i_{l}e_{l}},
\overline{\lambda}^{j_{k}e_{k}}}H^{-1}G^{*}+ F\mathcal{K}_{\mathcal{I},\lambda^{i_{l}e_{l}},
\overline{\lambda}^{j_{k}e_{k}}}\partial_{l}H^{-1}G^{*}\right)\\
&=-F\left(H^{-1}\partial_{l}H\mathcal{K}_{\mathcal{I},\lambda^{i_{l}e_{l}},\overline{\lambda}^{j_{k}e_{k}}}+
\partial_{l}\mathcal{K}_{\mathcal{I},\lambda^{i_{l}e_{l}},\overline{\lambda}^{j_{k}e_{k}}}
-\mathcal{K}_{\mathcal{I},\lambda^{i_{l}e_{l}},\overline{\lambda}^{j_{k}e_{k}}}H^{-1}\partial_{l}H\right)H^{-1}G^{*}\\
&=-F\left(\partial_{l}\mathcal{K}_{\mathcal{I},\lambda^{i_{l}e_{l}},\overline{\lambda}^{j_{k}e_{k}}}+
\left[H^{-1}\partial_{l}H, \mathcal{K}_{\mathcal{I},\lambda^{i_{l}e_{l}},\overline{\lambda}^{j_{k}e_{k}}}\right]\right)H^{-1} G^{*}\\
&= -F\mathcal{K}_{\mathcal{I},\lambda^{i_{l}e_{l}+e_{l}},\overline{\lambda}^{j_{k}e_{k}}}H^{-1} G^{*}
\nonumber
\end{aligned}
\end{equation}
It follows that
\begin{equation}\label{e5}
\mathscr{K}_{i_{l}e_{l},j_{k}e_{k}}(\mathcal{I})F+ F\mathcal{K}_{\mathcal{I},\lambda^{i_{l}e_{l}},\overline{\lambda}^{j_{k}e_{k}}}=0,\quad i_{l},j_{k}\geq 0.
\end{equation}
By (\ref{e5}), Lemma \ref{D3} and Lemma \ref{lm0}, we have that
$$\mathscr{K}_{I,0}(\mathcal{I})=\sum\limits_{i_{l}\neq0}\mathscr{K}_{i_{l}e_{l},0}(\mathcal{I})
=-\sum\limits_{i_{l}\neq0} F\mathcal{K}_{\mathcal{I},\lambda^{i_{l}e_{l}}}H^{-1} G^{*}
= -F\mathcal{K}_{\mathcal{I},\lambda^{I}}H^{-1} G^{*},\quad  I\neq 0.$$
$$\mathscr{K}_{0,J}(\mathcal{I})=\sum\limits_{j_{k}\neq0}\mathscr{K}_{0,j_{k}e_{k}}(\mathcal{I})
=-\sum\limits_{j_{k}\neq0} F\mathcal{K}_{\mathcal{I},\overline{\lambda}^{j_{k}e_{k}}}H^{-1} G^{*}
= -F\mathcal{K}_{\mathcal{I},\overline{\lambda}^{J}}H^{-1} G^{*},\quad J\neq 0,$$
and
$$\mathscr{K}_{I,J}(\mathcal{I})=\sum\limits_{i_{l},j_{k}\neq0}\mathscr{K}_{i_{l}e_{l},j_{k}e_{k}}(\mathcal{I})
= -\sum\limits_{i_{l},j_{k}\neq0} F\mathcal{K}_{\mathcal{I},\lambda^{i_{l}e_{l}},\overline{\lambda}^{j_{k}e_{k}}}H^{-1}G^{*}
=-F\mathcal{K}_{\mathcal{I},\lambda^{I},\overline{\lambda}^{J}}H^{-1} G^{*},\quad I, J\neq 0.
$$
This means that
$\mathscr{K}_{I,J}(\mathcal{I})F+
F\mathcal{K}_{\mathcal{I},\lambda^{I},\overline{\lambda}^{J}}=0$
for any $I,J\in \mathbb{Z}_{+}^{m}.$
\end{proof}

\begin{corollary}
Let $\textbf{T}=(T_{1},\cdots,T_{m})\in\mathcal{B}_{n}^{m}(\Omega)\cap\mathcal{L}(\mathcal{H})^{m}.$  Then
$$\text{trace}\, \mathscr{K}_{I,J}(\mathcal{I}_{\textbf{T}})(\lambda)=
-\text{trace}\, \mathcal{K}_{\mathcal{I}_{\textbf{T}},\lambda^{I},\overline{\lambda}^{J}}(\lambda),
\quad \lambda \in\Omega,\,\, I,J\in\mathbb{Z}_{+}^{m}.$$
\end{corollary}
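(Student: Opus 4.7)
The plan is to obtain the trace identity by exploiting the sharper, factored form of the relationship between $\mathscr{K}_{I,J}(\mathcal{I})$ and $\mathcal{K}_{\mathcal{I},\lambda^{I},\overline{\lambda}^{J}}$ that already appears inside the proof of Theorem \ref{thm1}, rather than the weaker bilateral identity stated as the theorem. Specifically, at the end of the proof of Theorem \ref{thm1} one reads off
\[
\mathscr{K}_{I,J}(\mathcal{I}_{\textbf{T}})(\lambda) \;=\; -\,F(\lambda)\,\mathcal{K}_{\mathcal{I}_{\textbf{T}},\lambda^{I},\overline{\lambda}^{J}}(\lambda)\,H^{-1}(\lambda)\,G^{*}(\lambda),
\]
where $H(\lambda) = G^{*}(\lambda)F(\lambda)$. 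The first step of my proof is to quote this factorization verbatim.

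The second step is to verify that both traces are well defined and that we may invoke cyclicity across the factorization. Because $\textbf{T}\in\mathcal{B}_{n}^{m}(\Omega)$ with $n$ finite, the idempotent $\mathcal{I}_{\textbf{T}}(\lambda)$ has rank $n$; the Cauchy--Riemann equations of Definition \ref{def2.1} force $\operatorname{Ran}(\partial_{i}\mathcal{I}_{\textbf{T}}) \subseteq \ker\mathcal{I}_{\textbf{T}}$ and $\partial_{i}\mathcal{I}_{\textbf{T}} = \partial_{i}\mathcal{I}_{\textbf{T}}\cdot\mathcal{I}_{\textbf{T}}$, so each $\partial_{i}\mathcal{I}_{\textbf{T}}$ (and dually each $\overline{\partial}_{j}\mathcal{I}_{\textbf{T}}$) has rank at most $n$. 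Since $\mathscr{K}_{I,J}(\mathcal{I}_{\textbf{T}})$ is a polynomial expression in $\mathcal{I}_{\textbf{T}}$ and its partial derivatives, it is of finite rank, whence its operator trace is well defined and the cyclic identity $\operatorname{trace}(AB) = \operatorname{trace}(BA)$ applies across the factors $F$, $\mathcal{K}_{\mathcal{I}_{\textbf{T}},\lambda^{I},\overline{\lambda}^{J}}\,H^{-1}$ and $G^{*}$.

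The third and final step is a one-line computation: taking traces and cycling $F$ to the end,
\[
\operatorname{trace}\,\mathscr{K}_{I,J}(\mathcal{I}_{\textbf{T}})
= -\operatorname{trace}\!\bigl(F\,\mathcal{K}_{\mathcal{I}_{\textbf{T}},\lambda^{I},\overline{\lambda}^{J}}H^{-1}G^{*}\bigr)
= -\operatorname{trace}\!\bigl(\mathcal{K}_{\mathcal{I}_{\textbf{T}},\lambda^{I},\overline{\lambda}^{J}}H^{-1}\,G^{*}F\bigr)
= -\operatorname{trace}\,\mathcal{K}_{\mathcal{I}_{\textbf{T}},\lambda^{I},\overline{\lambda}^{J}},
\]
because $G^{*}F = H$ so that $H^{-1}G^{*}F = \mathrm{id}_{n}$. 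The only conceptual subtlety, and the main thing one must be explicit about, is the shift of setting between an operator trace on $\mathcal{H}$ (on the left) and an $n\times n$ matrix trace (on the right); once the finite-rank observation of the previous paragraph is in place, the two are identified exactly via the cyclic identity above, and the corollary follows for all $I,J\in\mathbb{Z}_{+}^{m}$, including $I = 0$ or $J = 0$ where the same factorization degenerates to the base cases already recorded in equations (\ref{e1})--(\ref{e3}) of the proof of Theorem \ref{thm1}.
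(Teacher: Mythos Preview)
Your proof is correct and follows essentially the same approach as the paper: both extract the factored identity $\mathscr{K}_{I,J}(\mathcal{I}_{\textbf{T}}) = -F\,\mathcal{K}_{\mathcal{I}_{\textbf{T}},\lambda^{I},\overline{\lambda}^{J}}\,H^{-1}G^{*}$ from the proof of Theorem~\ref{thm1} and then take traces using cyclicity and $G^{*}F = H$. Your version is in fact more careful than the paper's, since you explicitly justify why the operator trace on $\mathcal{H}$ is well defined (finite rank) and why the cyclic rearrangement across operators between spaces of different dimension is legitimate, points the paper leaves implicit.
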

\begin{proof}
Setting $\{F_{i}\}_{i=1}^{n}$ and $\{G_{i}\}_{i=1}^{n}$ are two holomorphic frames of the Hermitian holomorphic vector bundle $E_{\textbf{T}}$, $F=(F_{1},\cdots,F_{n})$ and $G=(G_{1},\cdots,G_{n})$.
Then the mapping $\mathcal{I}_{\textbf{T}}:\Omega\rightarrow\mathcal{I}(\mathcal{L}(\mathcal{H}))$ by
$$\mathcal{I}_{\textbf{T}}(\lambda)=F(\lambda)(G^{*}(\lambda)F(\lambda))^{-1}G^{*}(\lambda).$$
is a extended holomorphic curve.
From Theorem \ref{thm1}, we have that
$\mathscr{K}_{I,J}(\mathcal{I}_{\textbf{T}})=
-F\mathcal{K}_{\mathcal{I}_{\textbf{T}},\lambda^{I},\overline{\lambda}^{J}}H^{-1}G^{*}$
where $H=G^{*}F$ and $I,J\in\mathbb{Z}_{+}^{m}.$
Then
$$\text{trace}\, \mathscr{K}_{I,J}(\mathcal{I}_{\textbf{T}})=
-\text{trace}\, (F\mathcal{K}_{\mathcal{I}_{\textbf{T}},\lambda^{I},\overline{\lambda}^{J}}H^{-1}G^{*})
=-\text{trace}\, \mathcal{K}_{\mathcal{I}_{\textbf{T}},\lambda^{I},\overline{\lambda}^{J}}.$$
\end{proof}

In particular, the curvature and its covariant derivatives of the Hermitian holomorphic vector bundle $\mathcal{E}_{\mathbf{T}}$ for $\mathbf{T}\in\mathcal{B}_{n}^{m}(\Omega)$, mentioned in \cite{CD}, depend on the choice of the holomorphic frame of $E_{\mathbf{T}}$. The trace of the curvature and its covariant derivatives of $E_{\mathbf{T}}$, however, does not depend on the choice of the holomorphic frame.

\begin{corollary}
Let $\mathbf{T}=(T_{1},T_{2},\ldots, T_{m})\in\mathcal{B}_{n}^{m}(\Omega)$. Then
the curvature of the Hermitian holomorphic vector bundle $\mathcal{E}_{\mathbf{T}}$, and its covariant derivatives, are similar in different holomorphic frames. Their traces, however, are unique, that is, their traces do not depend on the choice of holomorphic frames.
\end{corollary}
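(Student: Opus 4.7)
The plan is to leverage Theorem \ref{thm1} together with the fact that the orthogonal projection onto $\ker\mathscr{D}_{\mathbf{T}-\lambda}$ is intrinsic to the bundle $E_{\mathbf{T}}$ and is therefore frame-independent. Given two holomorphic frames $\gamma=(\gamma_{1},\ldots,\gamma_{n})$ and $\widetilde{\gamma}=(\widetilde{\gamma}_{1},\ldots,\widetilde{\gamma}_{n})$ of $E_{\mathbf{T}}$, there is a unique invertible holomorphic matrix-valued function $\Phi:\Omega\rightarrow GL_{n}(\mathbb{C})$ with $\widetilde{\gamma}=\gamma\Phi$. An elementary computation produces $\widetilde{h}=\Phi^{*}h\Phi$, $\widetilde{\gamma}^{*}=\Phi^{*}\gamma^{*}$, and consequently $\widetilde{h}^{-1}\widetilde{\gamma}^{*}=\Phi^{-1}h^{-1}\gamma^{*}$, so that
$$\mathcal{I}_{\mathbf{T}}:=\gamma h^{-1}\gamma^{*}=\widetilde{\gamma}\widetilde{h}^{-1}\widetilde{\gamma}^{*}$$
is genuinely independent of the frame chosen.

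Since $\mathcal{I}_{\mathbf{T}}$ is frame-independent, so are the quantities $\mathscr{K}_{I,J}(\mathcal{I}_{\mathbf{T}})$ of Definition \ref{D22} for every $I,J\in\mathbb{Z}_{+}^{m}$. Specializing Theorem \ref{thm1} to $F=G=\gamma$ (so that the metric $H=G^{*}F$ in Definition \ref{def2.2} reduces to the Gram matrix $h=\gamma^{*}\gamma$, and the classical curvature together with its covariant derivatives coincides with the Cowen--Douglas curvature $\mathcal{K}_{\mathbf{T}}$ and its covariant derivatives in the frame $\gamma$), the identity
$$\mathscr{K}_{I,J}(\mathcal{I}_{\mathbf{T}})\gamma+\gamma\,\mathcal{K}_{\mathbf{T},\lambda^{I},\overline{\lambda}^{J}}=0,$$
after left-multiplication by $\gamma^{*}$ and use of $\gamma^{*}\gamma=h$, gives
$$\mathcal{K}_{\mathbf{T},\lambda^{I},\overline{\lambda}^{J}}=-h^{-1}\gamma^{*}\mathscr{K}_{I,J}(\mathcal{I}_{\mathbf{T}})\gamma.$$
Repeating the same argument in the $\widetilde{\gamma}$-frame, and inserting $\widetilde{h}^{-1}\widetilde{\gamma}^{*}=\Phi^{-1}h^{-1}\gamma^{*}$, $\widetilde{\gamma}=\gamma\Phi$, together with the frame-invariance of $\mathscr{K}_{I,J}(\mathcal{I}_{\mathbf{T}})$, one obtains
$$\widetilde{\mathcal{K}}_{\mathbf{T},\lambda^{I},\overline{\lambda}^{J}}=\Phi^{-1}\,\mathcal{K}_{\mathbf{T},\lambda^{I},\overline{\lambda}^{J}}\,\Phi.$$
Thus the curvature and each of its covariant derivatives are similar matrices under any change of holomorphic frame, and taking traces, which are invariant under conjugation, yields the asserted frame-independence.

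The argument is essentially bookkeeping; the only point deserving real care is to verify that Definition \ref{def2.2} with $F=G=\gamma$ genuinely reproduces the Cowen--Douglas curvature in the frame $\gamma$ recalled in Section \ref{section2}. This is immediate once $H=\gamma^{*}\gamma=h$ is substituted: the two formulas for $\mathcal{K}_{\mathcal{I}}$ and $\mathcal{K}_{\mathbf{T}}$ agree term by term, and the recursive definitions of the covariant derivatives then match as well.
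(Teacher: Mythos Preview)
Your proof is correct and takes a genuinely different route from the paper's. The paper proves the corollary by direct computation: writing $\widetilde{\gamma}=\gamma\phi$ and $\widetilde{H}=\overline{\phi^{T}}H\phi$, it explicitly expands $\mathcal{K}_{\mathbf{T}}(\widetilde{\gamma})$ via the chain rule to obtain $\phi^{-1}\mathcal{K}_{\mathbf{T}}(\gamma)\phi$, then repeats this brute-force expansion inductively for each covariant derivative $\mathcal{K}_{\mathbf{T},\lambda_{p}^{i_{p}},\overline{\lambda}_{q}^{j_{q}}}$, and finally invokes Lemma~\ref{lm0} to assemble the general multi-index case. Your argument bypasses all of this by observing that the orthogonal projection $\mathcal{I}_{\mathbf{T}}=\gamma h^{-1}\gamma^{*}$ is frame-independent, hence so is $\mathscr{K}_{I,J}(\mathcal{I}_{\mathbf{T}})$, and then reading off the similarity relation directly from Theorem~\ref{thm1} applied once in each frame. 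This is more conceptual and makes the corollary an almost immediate consequence of Theorem~\ref{thm1}, whereas the paper's proof is logically independent of that theorem. The trade-off is that your argument leans on the heavier machinery of Theorem~\ref{thm1}, while the paper's computation, though longer, is entirely self-contained and would survive even if Theorem~\ref{thm1} were removed.
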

\begin{proof}
Letting $F=\{F_{1},\cdots,F_{n}\}$ and $G=\{G_{1},\cdots,G_{n}\}$
are two different frames of $E_{\mathbf{T}}$, and then there is an invertible holomorphic matrix $\phi(\lambda)=(\phi_{i,j}(\lambda))_{n\times n}$ such that
$$(G_{1}(\lambda),\cdots,G_{n}(\lambda))=(F_{1}(\lambda),\cdots,F_{n}(\lambda))\phi(\lambda),\quad\lambda\in\Omega.$$
Let $H=(\langle F_{j}(\lambda),F_{i}(\lambda)\rangle)_{n\times n}$ and $\widetilde{H}=(\langle G_{j}(\lambda),G_{i}(\lambda)\rangle)_{n\times n}$ be the metric matrices corresponding to frames $F$ and $G$, respectively. We obtain that
$$\widetilde{H}(\lambda)=(\langle G_{j}(\lambda),G_{i}(\lambda)\rangle)_{n\times n}=\left(\left\langle\sum\limits_{k=1}^{n}\phi_{k,j}(\lambda)F_{k}(\lambda),
\sum\limits_{k=1}^{n}\phi_{k,i}(\lambda)F_{k}(\lambda)\right\rangle\right)_{n\times n}=\overline{\phi^{T}}(\lambda)H(\lambda)\phi(\lambda)$$
where $\phi^{T}$ is the transposed matrix of $\phi$. Since $\phi(\lambda)$ is holomorphic and invertible, we have that
\begin{align*}
\mathcal{K}_{\mathbf{T}}(G)(\lambda)&=-\sum\limits_{i,j=1}^{m}\frac{\partial}{\partial\overline{\lambda}_{j}}(\widetilde{H}^{-1}(\lambda)\frac{\partial}{\partial\lambda_{i}}\widetilde{H}(\lambda))\\
&=-\sum\limits_{i,j=1}^{m}\frac{\partial}{\partial\overline{\lambda}_{j}}\left[\left(\phi^{T}(\lambda)H(\lambda)\phi(\lambda)\right)^{-1}\frac{\partial}{\partial\lambda_{i}}\left(\phi^{\mathsf{T}}(\lambda)H(\lambda)\phi(\lambda)\right)\right]\\
&=-\sum\limits_{i,j=1}^{m}\frac{\partial}{\partial\overline{\lambda}_{j}}\left[\phi^{-1}(\lambda)H^{-1}(\lambda)\left(\frac{\partial}{\partial\lambda_{i}}H(\lambda)\phi(\lambda)+H(\lambda)\frac{\partial}{\partial\lambda_{i}}\phi(\lambda)\right)\right]\\
&=-\sum\limits_{i,j=1}^{m}\frac{\partial}{\partial\overline{\lambda}_{j}}\left(\phi^{-1}(\lambda)H^{-1}(\lambda)\frac{\partial}{\partial\lambda_{i}}H(\lambda)\phi(\lambda)+\phi^{-1}(\lambda)\frac{\partial}{\partial\lambda_{i}}\phi(\lambda)\right)\\
&=-\sum\limits_{i,j=1}^{m}\left[\phi^{-1}(\lambda)\frac{\partial}{\partial\overline{\lambda}_{j}}\left(H^{-1}(\lambda)\frac{\partial}{\partial\lambda_{i}}H(\lambda)\right)\phi(\lambda)\right]\\
&=\phi^{-1}(\lambda)\mathcal{K}_{\mathbf{T}}(F)(\lambda)\phi(\lambda).
\end{align*}
Hence, for any $1\leq i, j\leq m,$ we have that
\begin{align*}
&\quad\quad~\mathcal{K}_{\mathbf{T}, \lambda_{i}}(G)(\lambda)\\
&=\frac{\partial}{\partial\lambda_{i}}\mathcal{K}_{\mathbf{T}}(G)(\lambda)+\left[\widetilde{H}^{-1}(\lambda)\frac{\partial}{\partial\lambda_{i}}\widetilde{H}(\lambda),\mathcal{K}_{\mathbf{T}}(G)(\lambda)\right]\\
&=\frac{\partial}{\partial\lambda_{i}}\left[\phi^{-1}(\lambda)\mathcal{K}_{\mathbf{T}}(F)(\lambda)\phi(\lambda)\right]+\widetilde{H}^{-1}(\lambda)\frac{\partial}{\partial\lambda_{i}}\widetilde{H}(\lambda)\mathcal{K}_{\mathbf{T}}(G)(\lambda)-\mathcal{K}_{\mathbf{T}}(G)(\lambda)\widetilde{H}^{-1}(\lambda)\frac{\partial}{\partial\lambda_{i}}\widetilde{H}(\lambda)\\
&=\frac{\partial}{\partial\lambda_{i}}\phi^{-1}(\lambda)\mathcal{K}_{\mathbf{T}}(F)(\lambda)\phi(\lambda)+\phi^{-1}(\lambda)\frac{\partial}{\partial\lambda_{i}}\mathcal{K}_{T}(F)(\lambda)\phi(\lambda)+\phi^{-1}(\lambda)\mathcal{K}_{\mathbf{T}}(F)(\lambda)\frac{\partial}{\partial\lambda_{i}}\phi(\lambda)\\
&\quad+\left(\phi^{-1}(\lambda)H^{-1}(\lambda)\frac{\partial}{\partial\lambda_{i}}H(\lambda)\phi(\lambda)+\phi^{-1}(\lambda)\frac{\partial}{\partial\lambda_{i}}\phi(\lambda)\right)\left(\phi^{-1}(\lambda)\mathcal{K}_{\mathbf{T}}(F)(\lambda)\phi(\lambda)\right)\\
&\quad-\left(\phi^{-1}(\lambda)\mathcal{K}_{\mathbf{T}}(F)(\lambda)\phi(\lambda)\right)\left(\phi^{-1}(\lambda)H^{-1}(\lambda)\frac{\partial}{\partial\lambda_{i}}H(\lambda)\phi(\lambda)+\phi^{-1}(\lambda)\frac{\partial}{\partial\lambda_{i}}\phi(\lambda)\right)\\
&=\frac{\partial}{\partial\lambda_{i}}\phi^{-1}(\lambda)\mathcal{K}_{\mathbf{T}}(F)(\lambda)\phi(\lambda)+\phi^{-1}(\lambda)\frac{\partial}{\partial\lambda_{i}}\mathcal{K}_{\mathbf{T}}(F)(\lambda)\phi(\lambda)+\phi^{-1}(\lambda)\mathcal{K}(\gamma)_{\mathbf{T}}(F)\frac{\partial}{\partial\lambda_{i}}\phi(\lambda)\\
&\quad+\phi^{-1}(\lambda)H^{-1}(\lambda)\frac{\partial}{\partial\lambda_{i}}H(\lambda)\mathcal{K}_{\mathbf{T}}(F)(\lambda)\phi(\lambda)
+\phi^{-1}(\lambda)\frac{\partial}{\partial\lambda_{i}}\phi(\lambda)\phi^{-1}(\lambda)\mathcal{K}_{\mathbf{T}}(F)(\lambda)\phi(\lambda)\\
&\quad-\phi^{-1}(\lambda)\mathcal{K}_{\mathbf{T}}(F)(\lambda)H^{-1}(\lambda)\frac{\partial}{\partial\lambda_{i}}H(\lambda)\phi(\lambda)-\phi^{-1}(\lambda)\mathcal{K}_{\mathbf{T}}(F)(\lambda)\frac{\partial}{\partial\lambda_{i}}\phi(\lambda)\\
&=\phi^{-1}(\lambda)\frac{\partial}{\partial\lambda_{i}}\mathcal{K}_{\mathbf{T}}(F)(\lambda)\phi(\lambda)+\phi^{-1}(\lambda)H^{-1}(\lambda)\frac{\partial}{\partial\lambda_{i}}H(\lambda)\mathcal{K}_{\mathbf{T}}(F)(\lambda)\phi(\lambda)\\
&\quad-\phi^{-1}(\lambda)\mathcal{K}_{\mathbf{T}}(F)(\lambda)H^{-1}(\lambda)\frac{\partial}{\partial\lambda_{i}}H(\lambda)\phi(\lambda)\\
&=\phi^{-1}(\lambda)\mathcal{K}_{\mathbf{T},\lambda_{i}}(F)(\lambda)\phi(\lambda)
\end{align*}
and
\begin{align*}
&\quad \mathcal{K}_{\mathbf{T},\overline{\lambda}_{j}}(G)(\lambda)\\
&=\frac{\partial}{\partial\overline{\lambda}_{j}}\mathcal{K}_{\mathbf{T}}(G)(\lambda)\\
&=\frac{\partial}{\partial\overline{\lambda}_{j}}\left[\phi^{-1}(\lambda)\mathcal{K}_{\mathbf{T}}(\gamma)(F)\phi(\lambda)\right]\\
&=\phi^{-1}(\lambda)\mathcal{K}_{\mathbf{T},\overline{\lambda}_{j}}(F)(\lambda)\phi(\lambda).
\end{align*}
Without loss of generality, assume that
$$\mathcal{K}_{\mathbf{T},\lambda_{p}^{i_{p}},\overline{\lambda}^{j_{q}}_{q}}(G)(\lambda)
=\phi^{-1}(\lambda)\mathcal{K}_{\mathbf{T},\lambda_{p}^{i_{p}},
\overline{\lambda}^{j_{q}}_{q}}(F)(\lambda)\phi(\lambda),\quad 1\leq p,q\leq m$$
for some $i_{p}$ and $i_{q}$ in $\mathbb{N}.$ Then
\begin{align*}
&\quad\quad \mathcal{K}_{\mathbf{T}, \lambda_{p}^{i_{p}+1},\overline{\lambda}^{j_{q}}_{q}}(G)(\lambda)\\
&=\frac{\partial}{\partial\lambda_{p}}\mathcal{K}_{\mathbf{T},\lambda_{p}^{i_{p}},\overline{\lambda}^{j_{q}}_{q}}
(G)(\lambda)+\left[\widetilde{H}^{-1}(\lambda)\frac{\partial}{\partial\lambda_{p}}\widetilde{H}
(\lambda),\mathcal{K}_{\mathbf{T},\lambda_{p}^{i_{p}},\overline{\lambda}^{j_{q}}_{q}}
(G)(\lambda)\right]\\
&=\frac{\partial}{\partial\lambda_{p}}\phi^{-1}(\lambda)
\mathcal{K}_{\mathbf{T},\lambda_{p}^{i_{p}},\overline{\lambda}^{j_{q}}_{q}}
(F)(\lambda)\phi(\lambda)+\phi^{-1}(\lambda)\frac{\partial}{\partial\lambda_{p}}
\mathcal{K}_{\mathbf{T},\lambda_{p}^{i_{p}},\overline{\lambda}^{j_{q}}_{q}}(F)(\lambda)
\phi(\lambda)+\phi^{-1}(\lambda)\mathcal{K}_{\mathbf{T},\lambda_{p}^{i_{p}},
\overline{\lambda}^{j_{q}}_{q}}(F)(\lambda)
\frac{\partial}{\partial\lambda_{p}}\phi(\lambda)\\
&\quad+\left(\phi^{-1}(\lambda)H^{-1}(\lambda)\frac{\partial}{\partial\lambda_{p}}H(\lambda)
\phi(\lambda)+\phi^{-1}(\lambda)\frac{\partial}{\partial\lambda_{p}}\phi(\lambda)\right)
\left(\phi^{-1}(\lambda)\mathcal{K}_{\mathbf{T},\lambda_{p}^{i_{p}},
\overline{\lambda}^{j_{q}}_{q}}(F)(\lambda)\phi(\lambda)\right)\\
&\quad-\left(\phi^{-1}(\lambda)\mathcal{K}_{\mathbf{T},\lambda_{p}^{i_{p}},
\overline{\lambda}^{j_{q}}_{q}}(F)(\lambda)\phi(\lambda)\right)
\left(\phi^{-1}(\lambda)H^{-1}(\lambda)\frac{\partial}{\partial\lambda_{p}}H(\lambda)
\phi(\lambda)+\phi^{-1}(\lambda)\frac{\partial}{\partial\lambda_{p}}\phi(\lambda)\right)\\
&=\phi^{-1}(\lambda)\frac{\partial}{\partial\lambda_{p}}\mathcal{K}_{\mathbf{T},\lambda_{p}^{i_{p}},\overline{\lambda}^{j_{q}}_{q}}
(F)(\lambda)\phi(\lambda)+\phi^{-1}(\lambda)H^{-1}(\lambda)\frac{\partial}{\partial\lambda_{p}}
H(\lambda)\mathcal{K}_{\mathbf{T},\lambda_{p}^{i_{p}},\overline{\lambda}^{j_{q}}_{q}}(F)(\lambda)\phi(\lambda)\\
&\quad-\phi^{-1}(\lambda)\mathcal{K}_{\mathbf{T},\lambda_{p}^{i_{p}},\overline{\lambda}^{j_{q}}_{q}}
(F)(\lambda)H^{-1}(\lambda)\frac{\partial}{\partial\lambda_{p}}H(\lambda)\phi(\lambda)\\
&=\phi^{-1}(\lambda)\mathcal{K}_{\mathbf{T}, \lambda_{p}^{i_{p}+1},\overline{\lambda}^{j_{q}}_{q}}(F)(\lambda)\phi(\lambda)
\end{align*}
and
\begin{equation*}
\begin{aligned}
\mathcal{K}_{\mathbf{T}, \lambda_{p}^{i_{p}},\overline{\lambda}^{j_{q}+1}_{q}}(G)(\lambda)
&=\frac{\partial}{\partial\overline{\lambda}_{q}}\mathcal{K}_{\mathbf{T},\lambda_{p}^{i_{p}},\overline{\lambda}^{j_{q}}_{q}}
(G)(\lambda)\\
&=\frac{\partial}{\partial\overline{\lambda}_{q}}\left[\phi^{-1}(\lambda)
\mathcal{K}_{\mathbf{T},\lambda_{p}^{i_{p}},\overline{\lambda}^{j_{q}}_{q}}(F)(\lambda)\phi(\lambda)\right]\\
&=\phi^{-1}(\lambda)\frac{\partial}{\partial\overline{\lambda}_{q}}
\mathcal{K}_{\mathbf{T},\lambda_{p}^{i_{p}},\overline{\lambda}^{j_{q}}_{q}}(F)(\lambda)
\phi(\lambda)\\
&=\phi^{-1}(\lambda)\mathcal{K}_{\mathbf{T},\lambda_{p}^{i_{p}},\overline{\lambda}^{j_{q}+1}_{q}}
(F)(\lambda)\phi(\lambda).
\end{aligned}
\end{equation*}
Therefore, for any $i_{p}$ and $i_{q}$ in $\mathbb{N}$, there is $$\mathcal{K}_{\mathbf{T},\lambda_{p}^{i_{p}},\overline{\lambda}^{j_{q}}_{q}}(G)(\lambda)
=\phi^{-1}(\lambda)\mathcal{K}_{\mathbf{T},\lambda_{p}^{i_{p}},\overline{\lambda}^{j_{q}}_{q}}(F)(\lambda)\phi(\lambda),\quad 1\leq p, q\leq m.$$
By Lemma \ref{lm0}, we obtain that for any $I,J\in\mathbb{Z}_{+}^{m},$
$$\mathcal{K}_{\mathbf{T}, \lambda^{I}}(G)(\lambda)=\sum\limits_{i_{p}\neq0}\mathcal{K}_{\mathbf{T},\lambda_{p}^{i_{p}}}(G)(\lambda)
=\sum\limits_{i_{p}\neq0}\left[\phi^{-1}(\lambda)\mathcal{K}_{\mathbf{T},\lambda_{p}^{i_{p}}}(F)(\lambda)\phi(\lambda)\right]
=\phi^{-1}(\lambda)\mathcal{K}_{\mathbf{T},\lambda^{I}}(F)(\lambda)\phi(\lambda),$$
$$\mathcal{K}_{\mathbf{T}, \overline{\lambda}^{J}}(G)(\lambda)=\sum\limits_{j_{q}\neq0}
\mathcal{K}_{\mathbf{T},\overline{\lambda}_{q}^{j_{q}}}(G)(\lambda)
=\sum\limits_{j_{q}\neq0}\left[\phi^{-1}(\lambda)
\mathcal{K}_{\mathbf{T},\overline{\lambda}_{q}^{j_{q}}}(F)(\lambda)\phi(\lambda)\right]
=\phi^{-1}(\lambda)\mathcal{K}_{\mathbf{T},\overline{\lambda}^{J}}(F)(\lambda)\phi(\lambda),$$
and
$$\mathcal{K}_{\mathbf{T},\lambda^{I}, \overline{\lambda}^{J}}(G)(\lambda)=\sum\limits_{i_{p},j_{q}\neq0}
\mathcal{K}_{\mathbf{T},\lambda_{p}^{i_{p}},\overline{\lambda}_{q}^{j_{q}}}(G)(\lambda)
=\sum\limits_{i_{p},j_{q}\neq0}\left[\phi^{-1}(\lambda)
\mathcal{K}_{\mathbf{T},\lambda_{p}^{i_{p}},\overline{\lambda}_{q}^{j_{q}}}(F)(\lambda)
\phi(\lambda)\right]
=\phi^{-1}(\lambda)\mathcal{K}_{\mathbf{T},\lambda^{I},\overline{\lambda}^{J}}(F)(\lambda)
\phi(\lambda).$$
It follows that $\mathcal{K}_{\mathbf{T},\lambda^{I}, \overline{\lambda}^{J}}(G)(\lambda)$ is similar to
$\mathcal{K}_{\mathbf{T},\lambda^{I}, \overline{\lambda}^{J}}(F)(\lambda)$ and
$$\text{trace\,}\mathcal{K}_{\mathbf{T},\lambda^{I}, \overline{\lambda}^{J}}(G)(\lambda)=\text{trace\,}( \phi^{-1}(\lambda)\mathcal{K}_{\mathbf{T},\lambda^{I}, \overline{\lambda}^{J}}(F)(\lambda)\phi(\lambda))=\text{trace\,}\mathcal{K}_{\mathbf{T},\lambda^{I}, \overline{\lambda}^{J}}(F)(\lambda)$$
for all $I,J\in\mathbb{Z}_{+}^{m}$ and $\lambda\in\Omega$.
\end{proof}

The above results reveal the relationship between the curvatures $\mathscr{K}(\mathcal{I})$ and $\mathcal{K}_{\mathcal{I}}$ of the extended holomorphic curve $\mathcal{I}$, and the corresponding connection $\pmb{\Theta}$ and classical connection $\Theta$ also have a similar relationship.

\begin{proposition}
Let $\mathcal{I}\in\mathcal{I}_{n}(\Omega,\mathcal{U})$ and $\mathcal{I}(\lambda)=F(\lambda)( G^{*}(\lambda)F(\lambda))^{-1}G^{*}(\lambda).$
Then the connection $\pmb{\Theta}$ and the classical connection $\Theta$ of $\mathcal{I}$ satisfy
$$\pmb{\Theta}(\lambda)F(\lambda)=F(\lambda)\Theta(\lambda)\quad \text{and}\quad \pmb{\Theta}(\lambda)\mathcal{I}(\lambda)=\pmb{\Theta}(\lambda),\quad\lambda\in\Omega.$$
\end{proposition}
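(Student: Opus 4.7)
The plan is to first rewrite $\pmb{\Theta}$ in a single closed form, from which both stated identities fall out by elementary algebra using only $G^*F=H$.

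First I would exploit the holomorphy conventions of the setup: $F$ is holomorphic in $\lambda$ while $G^*$ is antiholomorphic (since the $G_i$ are holomorphic), so $\partial G^*=0$. Applying the Leibniz rule to $\mathcal{I}=FH^{-1}G^*$ therefore gives
\[
\partial\mathcal{I}\;=\;(\partial F)H^{-1}G^{*}+F(\partial H^{-1})G^{*}.
\]
Combining this with the standard identity $\partial H^{-1}=-H^{-1}(\partial H)H^{-1}$ and substituting into the definition $\pmb{\Theta}=(\partial F)H^{-1}G^{*}-\partial\mathcal{I}$ cancels the $(\partial F)H^{-1}G^{*}$ terms, yielding the compact expression
\[
\pmb{\Theta}\;=\;FH^{-1}(\partial H)H^{-1}G^{*}.
\]

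With this formula in hand, both identities are transparent. Multiplying on the right by $F$ and using $G^{*}F=H$ to collapse the right-hand half gives
\[
\pmb{\Theta}F\;=\;FH^{-1}(\partial H)H^{-1}(G^{*}F)\;=\;FH^{-1}(\partial H)\;=\;F\Theta,
\]
by the definition $\Theta=H^{-1}\partial H$. Likewise, right-multiplication by $\mathcal{I}=FH^{-1}G^{*}$ produces an interior factor $G^{*}F=H$ that cancels the neighboring $H^{-1}$, so
\[
\pmb{\Theta}\mathcal{I}\;=\;FH^{-1}(\partial H)H^{-1}(G^{*}F)H^{-1}G^{*}\;=\;FH^{-1}(\partial H)H^{-1}G^{*}\;=\;\pmb{\Theta}.
\]

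There is no real obstacle here; the only step that needs care is the very first one, where one must recall that $F$ depends holomorphically on $\lambda$ while $G^{*}$ depends antiholomorphically, so that $\partial G^{*}=0$ and the product rule telescopes as above. Once this is observed, the entire proof is just a short algebraic manipulation.
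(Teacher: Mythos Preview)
Your proof is correct. The approach differs slightly from the paper's in organization: you first simplify $\pmb{\Theta}$ to the closed form $FH^{-1}(\partial H)H^{-1}G^{*}$ and then read off both identities, whereas the paper treats the two identities separately, proving $\pmb{\Theta}\mathcal{I}=\pmb{\Theta}$ directly from the Cauchy--Riemann relation $(\partial\mathcal{I})\mathcal{I}=\partial\mathcal{I}$ without expanding $\partial\mathcal{I}$, and only unfolding $\partial\mathcal{I}$ (via $\partial G^{*}=0$) for the identity $\pmb{\Theta}F=F\Theta$. The underlying ingredients are identical; your route is a bit more economical since the single closed form handles both claims at once, while the paper's version for $\pmb{\Theta}\mathcal{I}=\pmb{\Theta}$ makes the role of the holomorphicity condition on $\mathcal{I}$ more visible.
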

\begin{proof}
Since $(\partial\mathcal{I})\mathcal{I}=\partial\mathcal{I}$, we have that
$$
\pmb{\Theta}\mathcal{I}=\left[(\partial F)H^{-1}G^{*}-
\partial\mathcal{I}\right]\mathcal{I}
=(\partial F) H^{-1}G^{*}F H^{-1}G^{*}-(\partial\mathcal{I})\mathcal{I}
=(\partial F)H^{-1}G^{*}-
\partial\mathcal{I}=\pmb{\Theta}$$
and
$$
\pmb{\Theta}F=[(\partial F)H^{-1}G^{*}-\partial\mathcal{I}]F
=\partial F-[(\partial F)H^{-1}G^{*}+F(\partial H^{-1})G^{*}]F=-F\partial H^{-1}H=F\Theta.$$
\end{proof}

\subsection{\sf Similarity and unitary equivalent of extended holomorphic curvature}
\
\newline
\indent In this subsection, we give a series of properties of unitarily classification and similarity classification of extended holomorphic curves in the multivariate for $C^{*}$-algebras, and discuss the relationship between extended holomorphic curves in the multivariate for $C^{*}$-algebras and extended holomorphic curves in the Cowen-Douglas class.
\begin{thm}\label{lm1}
Let $\mathcal{I}_{1},\mathcal{I}_{2}\in\mathcal{I}_{n}(\Omega,\mathcal{U}).$ If $\mathcal{I}_{1}\sim_{u}\mathcal{I}_{2}$, then $$\mathscr{K}_{I,J}(\mathcal{I}_{1})(\lambda)\sim_{u}\mathscr{K}_{I,J}(\mathcal{I}_{2})(\lambda),
\quad\lambda\in\Omega,\,\,I,J\in\mathbb{Z}_{+}^{m}.$$
\end{thm}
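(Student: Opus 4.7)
The plan is to exploit the fact that the intertwining unitary is constant in $\lambda$ and then run an induction driven by the recursions \eqref{e2.3}--\eqref{e2.4}. First I would unwind the hypothesis $\mathcal{I}_1\sim_u\mathcal{I}_2$: there is a unitary $U\in\mathcal{U}$ with $U\mathcal{I}_1(\lambda)=\mathcal{I}_2(\lambda)U$, equivalently $\mathcal{I}_2(\lambda)=U\mathcal{I}_1(\lambda)U^*$ for all $\lambda\in\Omega$. Because $U$ does not depend on $\lambda$, every partial derivative $\partial_l$ and $\overline{\partial}_k$ commutes with conjugation by $U$; in particular $\partial_l\mathcal{I}_2=U(\partial_l\mathcal{I}_1)U^*$ and $\overline{\partial}_k\mathcal{I}_2=U(\overline{\partial}_k\mathcal{I}_1)U^*$ for all $1\leq l,k\leq m$. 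Substituting into $\mathscr{K}(\mathcal{I})=\overline{\partial}\mathcal{I}\,\partial\mathcal{I}$ settles the base case $I=J=0$:
$$\mathscr{K}(\mathcal{I}_2)=U(\overline{\partial}\mathcal{I}_1)U^*\,U(\partial\mathcal{I}_1)U^*=U\mathscr{K}(\mathcal{I}_1)U^*.$$

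For the inductive step I would perform a double induction on $|I|+|J|$ using the definitional recursions \eqref{e2.3} and \eqref{e2.4}. Assuming $U\mathscr{K}_{I',J'}(\mathcal{I}_1)U^*=\mathscr{K}_{I',J'}(\mathcal{I}_2)$ for all $(I',J')$ appearing on the right-hand sides, conjugating \eqref{e2.3} for $\mathcal{I}_1$ by $U$ gives
$$U\mathscr{K}_{I+e_l,J}(\mathcal{I}_1)U^* = U\mathscr{K}_{\sum_{p\neq l}i_p e_p,J}(\mathcal{I}_1)U^* + U\mathcal{I}_1 U^*\cdot U\bigl(\partial_l\mathscr{K}_{i_l e_l,J}(\mathcal{I}_1)\bigr)U^*.$$
Using $U\mathcal{I}_1 U^*=\mathcal{I}_2$, the constancy of $U$ (so that $U(\partial_l X)U^*=\partial_l(UXU^*)$), and the inductive hypothesis applied to both terms on the right, the right-hand side becomes exactly the recursion defining $\mathscr{K}_{I+e_l,J}(\mathcal{I}_2)$. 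The symmetric computation based on \eqref{e2.4} handles the increment in the antiholomorphic multi-index $J$, closing the induction and giving $U\mathscr{K}_{I,J}(\mathcal{I}_1)U^*=\mathscr{K}_{I,J}(\mathcal{I}_2)$ for all $I,J\in\mathbb{Z}_+^m$.

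The main issue to watch is the bookkeeping: the \emph{same} unitary $U$ must witness the equivalence at every stage, and the order-dependence noted in Remark \ref{re2} must be respected when invoking the induction. Both are automatic here precisely because $U$ is independent of $\lambda$ and therefore passes freely through the partial derivative operators and through the left/right multiplications by $\mathcal{I}$ that appear in \eqref{e2.3}--\eqref{e2.4}. There is no analytic or structural obstacle; the statement is essentially a naturality property of the covariant derivatives $\mathscr{K}_{I,J}$ under pointwise conjugation by a constant unitary, so the argument reduces to a clean induction on multi-index length.
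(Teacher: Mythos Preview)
Your argument is correct and takes a more direct route than the paper's. The paper first proves a structural lemma (Lemma~\ref{lm4}) expressing every covariant derivative $\mathscr{K}_{I,J}(\mathcal{I})$ as a sum of monomials of the form $\pm(\overline{D}^{J_1}\mathcal{I}\,D^{I_1}\mathcal{I})^{l_1}\cdots(\overline{D}^{J_t}\mathcal{I}\,D^{I_t}\mathcal{I})^{l_t}$, and then observes that a constant unitary $U$ intertwines each factor $D^{I'}\mathcal{I}$ and $\overline{D}^{J'}\mathcal{I}$ separately, hence any product and any sum of products. You bypass this intermediate decomposition entirely by running the defining recursions \eqref{e2.3}--\eqref{e2.4} directly and noting that conjugation by a constant unitary commutes with $\partial_l$, $\overline{\partial}_k$, and left or right multiplication by $\mathcal{I}$. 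Your route is shorter and perfectly adequate for the statement; the paper's monomial expansion gives a finer structural description of $\mathscr{K}_{I,J}$ that may be of independent interest (and feeds into the techniques around Corollary~\ref{cor2} and Lemma~\ref{lm3}), but is not strictly needed for this theorem.
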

In order to prove this theorem, we first prove the following lemmas, which reveal the essential and intrinsic structure of the curvature of the extended holomorphic curve and its covariant derivatives.

\begin{lemma}\label{le00}
Let $\mathcal{I}\in\mathcal{I}_{n}(\Omega,\mathcal{U})$. For any $I,J\in\mathbb{Z}_{+}^{m}$ and $1\leq i,j\leq m$, we have that
\begin{itemize}
  \item [(1)]$D^{I}\mathcal{I}=\sum\limits_{0\leq I_{1}\leq I}\frac{I!}{I_{1}!(I-I_{1})!}D^{I_{1}} F D^{I-I_{1}}H^{-1}G^{*},$
  \item[(2)]$\overline{D}^{e_{j}}D^{I}\mathcal{I}=
  D^{I}\mathcal{I}\overline{D}^{e_{j}}\mathcal{I}-
  \overline{D}^{e_{j}}\mathcal{I}D^{I}\mathcal{I}-\sum\limits_{p=1}^{m}\sum\limits_{e_{p}\leq I_{1}\leq I-e_{p}}\frac{I!}{I_{1}!(I-I_{1})!}D^{I-I_{1}}\mathcal{I}\overline{D}^{e_{j}}
  \mathcal{I}D^{I_{1}}\mathcal{I},$
  \item [(3)] $\overline{D}^{J}\mathcal{I}=\sum\limits_{0\leq J_{1}\leq J}\frac{J!}{J_{1}!(J-J_{1})!} F\overline{D}^{J_{1}}H^{-1}\overline{D}^{J-J_{1}}G^{*}$,\,\,\text{and}
  \item[(4)]$\overline{D}^{J}D^{e_{i}}\mathcal{I}=D^{e_{i}}\mathcal{I}\overline{D}^{J}\mathcal{I}
  -\overline{D}^{J}\mathcal{I}D^{e_{i}}\mathcal{I}-\sum\limits_{p=1}^{m}\sum\limits_{e_{p}\leq J_{1}\leq J-e_{p}}\frac{J!}{J_{1}!(J-J_{1})!}\overline{D}^{J-J_{1}}\mathcal{I}D^{e_{i}}\mathcal{I}
  \overline{D}^{J_{1}}\mathcal{I}$.
\end{itemize}
\end{lemma}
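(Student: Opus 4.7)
The lemma splits naturally into two independent pieces. Statements (1) and (3) are straightforward Leibniz identities exploiting the (anti-)holomorphicity of $F$ and $G$, while (2) and (4) rest on the Cauchy--Riemann relations of Definition \ref{def2.1}.

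For (1), since each component of $F$ is holomorphic we have $\overline{D}^{e_{j}}F=0$, and since $G^{*}$ depends only on $\overline{\lambda}$ we have $D^{e_{i}}G^{*}=0$. Applying the multi-index Leibniz rule to $\mathcal{I}=FH^{-1}G^{*}$ and using $D^{I-I_{1}}(H^{-1}G^{*})=(D^{I-I_{1}}H^{-1})G^{*}$ collapses the triple-product Leibniz expansion exactly to the stated sum. Statement (3) is the mirror image, obtained from $\overline{D}^{J-J_{1}}(FH^{-1})=F\cdot\overline{D}^{J-J_{1}}H^{-1}$.

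For (2) I induct on $|I|$, relying on the four identities $\mathcal{I}\partial_{k}\mathcal{I}=0$, $(\partial_{k}\mathcal{I})\mathcal{I}=\partial_{k}\mathcal{I}$, $\mathcal{I}\overline{\partial}_{j}\mathcal{I}=\overline{\partial}_{j}\mathcal{I}$, and $(\overline{\partial}_{j}\mathcal{I})\mathcal{I}=0$ from Definition \ref{def2.1}. Two consequences I will use repeatedly are the nilpotency relations $(\partial_{k}\mathcal{I})(\partial_{l}\mathcal{I})=0$ and $(\overline{\partial}_{j}\mathcal{I})(\overline{\partial}_{j'}\mathcal{I})=0$; indeed $(\partial_{k}\mathcal{I})(\partial_{l}\mathcal{I})=(\partial_{k}\mathcal{I})\mathcal{I}(\partial_{l}\mathcal{I})=0$, and the conjugate version follows by the same trick. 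The base case $I=e_{k}$ is obtained by applying $\overline{\partial}_{j}$ to $\mathcal{I}\partial_{k}\mathcal{I}=0$ to get $\mathcal{I}(\overline{\partial}_{j}\partial_{k}\mathcal{I})=-(\overline{\partial}_{j}\mathcal{I})(\partial_{k}\mathcal{I})$, applying $\partial_{k}$ to $\mathcal{I}\overline{\partial}_{j}\mathcal{I}=\overline{\partial}_{j}\mathcal{I}$ to get $(1-\mathcal{I})(\overline{\partial}_{j}\partial_{k}\mathcal{I})=(\partial_{k}\mathcal{I})(\overline{\partial}_{j}\mathcal{I})$, and summing. For the inductive step, $\partial_{l}$ is applied to the formula for $I$; each emerging $\partial_{l}\overline{D}^{e_{j}}\mathcal{I}$ factor is rewritten via the base case; and the nilpotency above annihilates the would-be four-fold products carrying two holomorphic derivatives side-by-side. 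The surviving triple products reindex against the enlarged range for $I+e_{l}$ via the Pascal rule $\binom{I+e_{l}}{I_{1}}=\binom{I}{I_{1}}+\binom{I}{I_{1}-e_{l}}$ for multinomial coefficients. Statement (4) follows by the mirror induction on $|J|$, using $(\overline{\partial}_{j}\mathcal{I})\mathcal{I}=0$ in place of $\mathcal{I}\partial_{k}\mathcal{I}=0$.

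\textbf{Main obstacle.} The technical crux is combinatorial bookkeeping: after $\partial_{l}$ lands on the inductive formula, the base-case substitution creates new triple products that must be matched, term by term, to the prescribed sum with its weights $\frac{(I+e_{l})!}{I_{1}!((I+e_{l})-I_{1})!}$ and its coordinate-indexed range. The saving grace is that any product of two holomorphic (or two antiholomorphic) derivatives vanishes by nilpotency, so no new operator identities are required; the remaining work is a careful Pascal-style accounting across the $m$ coordinate directions.
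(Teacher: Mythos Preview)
Your proposal is correct and follows essentially the same route as the paper. The paper proves (1) and (3) by induction on $|I|$ (respectively $|J|$), which amounts to re-deriving the multi-index Leibniz rule you invoke directly; for (2) and (4) the paper also inducts on $|I|$, obtains the same base case from the Cauchy--Riemann relations (via $\overline{D}^{e_{j}}(D^{e_{i}}\mathcal{I}\,\mathcal{I})$ rather than your $\mathcal{I}+(1-\mathcal{I})$ split, but equivalently), applies $\partial_{l}$ to the inductive formula, and reindexes with the Pascal identity. Your explicit isolation of the nilpotency relations $D^{I}\mathcal{I}\cdot\partial_{l}\mathcal{I}=0$ and $\partial_{l}\mathcal{I}\cdot D^{I}\mathcal{I}=0$ (for $I\neq 0$) is precisely the cancellation the paper uses tacitly when it drops the terms where $\partial_{l}$ lands on the middle $\overline{D}^{e_{j}}\mathcal{I}$ factor.
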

\begin{proof}
Letting $I:=e_{i}$ for $1\leq i\leq m$, then
$$D^{e_{i}}\mathcal{I}=D^{e_{i}}( F H^{-1}G^{*})=D^{e_{i}} F H^{-1}G^{*}+F D^{e_{i}}H^{-1}G^{*}.$$
This means that (1) holds for $I=e_{i}, 1\leq i\leq m$. Without losing generality, assume that  (1) holds for some $I\in\mathbb{Z}_{+}^{m}$.
For any $K,J\in\mathbb{Z}_{+}^{m}$ and $1\leq l\leq m$, there is
\begin{equation}\label{le003}
\begin{aligned}
\frac{(J+e_{l})!}{K![J-(K-e_{l})]!}
=\frac{J!}{(K-e_{l})![J-(K-e_{l})]!}+
\frac{J!}{K!(J-K)!},
\end{aligned}
\end{equation}
so
\begin{eqnarray*}\label{eq8}
D^{I+e_{l}}\mathcal{I}&=&D^{e_{l}}\left(\sum_{0\leq I_{1}\leq I}\frac{I!}{I_{1}!(I-I_{1})!}D^{I_{1}} F D^{I-I_{1}}H^{-1}G^{*}\right)\\ \nonumber
&=&\sum_{0\leq I_{1}\leq I}\frac{I!}{I_{1}!(I-I_{1})!}(D^{I_{1}+e_{l}} F D^{I-I_{1}}H^{-1}+D^{I_{1}} F D^{I-I_{1}+e_{l}}H^{-1})G^{*}\\ \nonumber
&=&(D^{e_{l}} F D^{I}H^{-1}+ F D^{I+e_{l}}H^{-1})G^{*}+\cdots\\ \nonumber
&&+\frac{I!}{(I'-e_{l})!(I-(I' - e_{l}))!}(D^{I'} F D^{I-I'+e_{l}}H^{-1}+D^{I'-e_{l}} F D^{I-I'+2e_{l}}H^{-1})G^{*}\\ \nonumber
&&+\frac{I!}{I'!(I-I')!}(D^{I'+e_{l}} F D^{I-I'}H^{-1}+D^{I'} F D^{I-I'+e_{l}}H^{-1})G^{*}\\ \nonumber
&&+\frac{I!}{(I'+e_{l})!(I-(I' + e_{l}))!}(D^{I'+2e_{l}} F D^{I-I'-e_{l}}H^{-1}+D^{I'+e_{l}} F D^{I-I'}H^{-1})G^{*}\\ \nonumber
&&\quad+\cdots+(D^{I+e_{l}} F H^{-1}+D^{I} F D^{e_{l}}H^{-1})G^{*}\\ \nonumber
&=&F D^{I+e_{l}}H^{-1}G^{*}+\left(\frac{I!}{(e_{l}-e_{l})!(I-(e_{l}-e_{l}))!}+
\frac{I!}{e_{l}!(I-e_{l})!}\right)D^{e_{l}} F D^{I}H^{-1}G^{*}+\cdots\\ \nonumber
&&+\left(\frac{I!}{(I'-e_{l})!(I-(I'-e_{l}))!}+\frac{I!}{I'!(I-I')!}\right)D^{I'} F D^{I-I'+e_{l}}H^{-1}G^{*}\\ \nonumber
&&+\left(\frac{I!}{I'!(I-I')!}+\frac{I!}{(I'+e_{l})!(I-(I' + e_{l}))!}\right)D^{I'+e_{l}} F D^{I-I'}H^{-1}G^{*}\\ \nonumber
&&+\cdots+D^{I+e_{l}} F H^{-1}G^{*}\\ \nonumber
&=&\sum_{0\leq I'\leq I+e_{l}}\frac{(I+e_{l})!}{I'!((I+e_{l})-I')!}D^{I'} F D^{(I+e_{l})-I'}H^{-1}G^{*}.
\nonumber
\end{eqnarray*}
It follows that (1) holds for any $I\in\mathbb{Z}_{+}^{m}$.
By $(\overline{D}^{e_{i}}\mathcal{I})\mathcal{I}=0,$ we have that
\begin{equation}\label{le005}
  \begin{aligned}
\overline{D}^{e_{j}}(D^{e_{i}}\mathcal{I})
=\overline{D}^{e_{j}}(D^{e_{i}}\mathcal{I}\mathcal{I})
=(\overline{D}^{e_{j}}D^{e_{i}}\mathcal{I})\mathcal{I}+(D^{e_{i}}\mathcal{I})(\overline{D}^{e_{j}}\mathcal{I})=(D^{e_{i}}\mathcal{I})(\overline{D}^{e_{j}}\mathcal{I})-(\overline{D}^{e_{j}}\mathcal{I})( D^{e_{i}}\mathcal{I}).
  \end{aligned}
\end{equation}
Thus, (2) is holds for $I=e_{i}$, where $1\leq i\leq m.$ Without loss of generality, assume that (2)
holds for some $I\in\mathbb{Z}_{+}^{m}$.
From (\ref{le003}), we obtain that for any $1\leq l,j,p\leq m, p\neq l,$
\begin{align}\label{eq9}
&D^{I}\mathcal{I}\overline{D}^{e_{j}}
\mathcal{I}D^{e_{l}}\mathcal{I}+D^{e_{l}}\mathcal{I}\overline{D}^{e_{j}}\mathcal{I}D^{I}
\mathcal{I}\,+\sum\limits_{e_{l}\leq I_{1}\leq I-e_{l}}\frac{I!}{I_{1}!(I-I_{1})!}(D^{I+e_{l}-I_{1}}\mathcal{I}
\overline{D}^{e_{j}}\mathcal{I}D^{I_{1}}\mathcal{I}+D^{I-I_{1}}\mathcal{I}\overline{D}^{e_{j}}
\mathcal{I}D^{I_{1}+e_{l}}\mathcal{I})\\ \nonumber
=&D^{I}\mathcal{I}\overline{D}^{e_{j}}
\mathcal{I}D^{e_{l}}\mathcal{I}
+\frac{I!}{e_{l}!(I-e_{l})!}(D^{I}\mathcal{I}\overline{D}^{e_{j}}
\mathcal{I}D^{e_{l}}\mathcal{I}
+D^{I-e_{l}}\mathcal{I}\overline{D}^{e_{j}}\mathcal{I}D^{2e_{l}}\mathcal{I})+\cdots\\ \nonumber
&+\frac{I!}{(I'-e_{k})!(I-(I'-e_{k}))!}(D^{I+e_{l}-I'+e_{k}}\mathcal{I}
\overline{D}^{e_{j}}\mathcal{I} D^{I'-e_{k}}\mathcal{I}+D^{I-I'+e_{k}}\mathcal{I}\overline{D}^{e_{j}}\mathcal{I} D^{I'-e_{k}+e_{l}}\mathcal{I})\\ \nonumber
&+\frac{I!}{(I'-e_{k}+e_{l})!(I-(I'-e_{k}+e_{l}))!}(D^{I-I'+e_{k}}\mathcal{I}
\overline{D}^{e_{j}}\mathcal{I} D^{I'-e_{k}+e_{l}}\mathcal{I}+D^{I-I'+e_{k}-e_{l}}\mathcal{I}\overline{D}^{e_{j}}\mathcal{I} D^{I'-e_{k}+2e_{l}}\mathcal{I})+\cdots\\ \nonumber
&+\frac{I!}{(I'-e_{l})!e_{l}!}(D^{2e_{l}}\mathcal{I}\overline{D}^{e_{j}}\mathcal{I} D^{I-e_{l}}\mathcal{I}+D^{e_{l}}\mathcal{I}\overline{D}^{e_{j}}\mathcal{I} D^{I}\mathcal{I})+D^{e_{l}}\mathcal{I}\overline{D}^{e_{j}}\mathcal{I}D^{I}
\mathcal{I}\\ \nonumber
=&\sum_{e_{l}\leq I'\leq I}\frac{(I+e_{l})!}{I'!(I+e_{l}-I')!}
(D^{I+e_{l}-I'}\mathcal{I}\overline{D}^{e_{j}}\mathcal{I}D^{I'}\mathcal{I}),
\nonumber
\end{align}
and
\begin{align}\label{eq10}
&\sum\limits_{e_{p}\leq I_{1}\leq I-e_{p}}\frac{I!}{I_{1}!(I-I_{1})!}(D^{I+e_{l}-I_{1}}\mathcal{I}
\overline{D}^{e_{j}}\mathcal{I}D^{I_{1}}\mathcal{I}+D^{I-I_{1}}\mathcal{I}\overline{D}^{e_{j}}
\mathcal{I}D^{I_{1}+e_{l}}\mathcal{I})\\ \nonumber
=&\frac{I!}{e_{p}!(I-e_{p})!}(D^{I+e_{l}-e_{p}}\mathcal{I}\overline{D}^{e_{j}}
\mathcal{I}D^{e_{p}}\mathcal{I}
+D^{I-e_{p}}\mathcal{I}\overline{D}^{e_{j}}\mathcal{I}D^{e_{p}+e_{l}}\mathcal{I})\\ \nonumber
&+\frac{I!}{(e_{p}+e_{l})!(I-(e_{p}+e_{l}))!}(D^{I-e_{p}}\mathcal{I}
\overline{D}^{e_{j}}\mathcal{I} D^{e_{p}+e_{l}}\mathcal{I}+D^{I-e_{p}-e_{l}}\mathcal{I}\overline{D}^{e_{j}}\mathcal{I} D^{e_{p}+2e_{l}}\mathcal{I})+\cdots\\ \nonumber
&+\frac{I!}{(I'-e_{k})!(I-(I'-e_{k}))!}(D^{I+e_{l}-I'+e_{k}}\mathcal{I}
\overline{D}^{e_{j}}\mathcal{I} D^{I'-e_{k}}\mathcal{I}+D^{I-I'+e_{k}}\mathcal{I}\overline{D}^{e_{j}}\mathcal{I} D^{I'-e_{k}+e_{l}}\mathcal{I})\\ \nonumber
&+\frac{I!}{(I'-e_{k}+e_{l})!(I-(I'-e_{k}+e_{l}))!}(D^{I-I'+e_{k}}\mathcal{I}
\overline{D}^{e_{j}}\mathcal{I} D^{I'-e_{k}+e_{l}}\mathcal{I}+D^{I-I'+e_{k}-e_{l}}\mathcal{I}\overline{D}^{e_{j}}\mathcal{I} D^{I'-e_{k}+2e_{l}}\mathcal{I})+\cdots\\ \nonumber
&+\frac{I!}{(I'-e_{p})!e_{p}!}(D^{e_{l}+e_{p}}\mathcal{I}\overline{D}^{e_{j}}\mathcal{I} D^{I-e_{p}}\mathcal{I}+D^{e_{p}}\mathcal{I}\overline{D}^{e_{j}}\mathcal{I} D^{I-e_{p}+e_{l}}\mathcal{I})\\ \nonumber
=&\sum_{e_{p}\leq I'\leq I+e_{l}-e_{p}}\frac{(I+e_{l})!}{I'!(I+e_{l}-I')!}
(D^{I+e_{l}-I'}\mathcal{I}\overline{D}^{e_{j}}\mathcal{I}D^{I'}\mathcal{I}).
\nonumber
\end{align}
Therefore, for $I+e_{l}\in\mathbb{Z}_{+}^{m}$,
by (\ref{le005})-(\ref{eq10}), we have that
\begin{equation*}
  \begin{aligned}
&\quad\,\,\overline{D}^{e_{j}}D^{I+e_{l}}\mathcal{I}\\
&=D^{e_{l}}(\overline{D}^{e_{j}}D^{I}\mathcal{I})\\
&=D^{e_{l}}\left(D^{I}\mathcal{I}\overline{D}^{e_{j}}\mathcal{I}-\overline{D}^{e_{j}}\mathcal{I}D^{I}\mathcal{I}
-\sum_{p=1}^{m}\sum_{e_{p}\leq I_{1}\leq I-e_{p}}\frac{I!}{I_{1}!(I-I_{1})!}D^{I-I_{1}}\mathcal{I}\overline{D}^{e_{j}}
\mathcal{I}D^{I_{1}}\mathcal{I}\right)\\
&=D^{I+e_{l}}\mathcal{I}\overline{D}^{e_{j}}\mathcal{I}-\overline{D}^{e_{j}}\mathcal{I} D^{I+e_{l}}\mathcal{I}-D^{I}\mathcal{I}\overline{D}^{e_{j}}
\mathcal{I}D^{e_{l}}\mathcal{I}-D^{e_{l}}\mathcal{I}\overline{D}^{e_{j}}\mathcal{I}D^{I}\mathcal{I}\\
&\quad-\sum_{p=1}^{m}\!\sum_{e_{p}\leq I_{1}\leq I-e_{p}}
\frac{I!}{I_{1}!(I-I_{1})!}(D^{I+e_{l}-I_{1}}\mathcal{I}\overline{D}^{e_{j}}\mathcal{I}D^{I_{1}}\mathcal{I}+D^{I-I_{1}}\mathcal{I}\overline{D}^{e_{j}}\mathcal{I}D^{I_{1}+e_{l}}\mathcal{I}).\\
&=D^{I+e_{l}}\mathcal{I}\overline{D}^{e_{j}}\mathcal{I}-\overline{D}^{e_{j}}\mathcal{I}D^{I+e_{l}}\mathcal{I}-
\sum_{p=1}^{m}\sum_{e_{p}\leq I'\leq I+e_{l}-e_{p}}\frac{(I+e_{l})!}{I'!(I+e_{l}-I')!}
(D^{I+e_{l}-I'}\mathcal{I}\overline{D}^{e_{j}}\mathcal{I}D^{I'}\mathcal{I})\\
\end{aligned}
\end{equation*}
The above completes the proof of (1) and (2). Similarly, we obtain proof (3) and (4) are also hold.
\end{proof}

Let $\Omega\subset\mathbb{C}^{m}$ be a bounded domain, and let $\mathcal{I}\in\mathcal{I}_{n}(\Omega,\mathcal{U})$ be an extended holomorphic curve on $C^{*}$-algebra $\mathcal{U}$. For any $\lambda_{0}\in\Omega$, there exists an open neighborhood $\Omega_{0}\subset\Omega$ of $\lambda_{0}$ such that
$$\mathcal{I}(\lambda)=\sum_{I,J\in\mathbb{Z}_{+}^{m}}
\frac{D^{I}\overline{D}^{J}\mathcal{I}(\lambda_{0})}{I!J!}
(\lambda-\lambda_{0})^{I}(\overline{\lambda}-\overline{\lambda}_{0})^{J},\quad \lambda\in\Omega_{0}.$$
This indicates that $\{D^{I}\overline{D}^{J}\mathcal{I}:I,J\in\mathbb{Z}_{+}^{m}\}$ is the unitary invariant of the extended holomorphic curve $\mathcal{I}$ in $\mathcal{I}_{n}(\Omega,\mathcal{U}).$
M. Martin and N. Salinas  provided in 2.10 \cite{Flag} that every derivative $D^{I}\overline{D}^{J}P(\lambda), I,J\in\mathbb{Z}_{+}^{m},$ of the holomorphic curve $P(\lambda)$ may be expressed as the sum of monomials of the form
$$\pm(\overline{D}^{J_{1}}P)(D^{I_{1}}P)\cdots(\overline{D}^{J_{t}}P)(D^{I_{t}}P)
\quad\text{and}\quad\pm(D^{I_{1}}P)(\overline{D}^{J_{1}}P)\cdots(D^{I_{t}}P)
(\overline{D}^{J_{t}}P).
$$ The following result shows that it is also held on extended holomorphic curves.

\begin{corollary}\label{cor2}
Let $\mathcal{I}\in\mathcal{I}_{n}(\Omega,\mathcal{U})$. Then the derivative $D^{I}\overline{D}^{J}\mathcal{I}(\lambda), I,J\in\mathbb{Z}_{+}^{m},$ may be expressed as the sum of monomials of the form
$$\pm(D^{I_{1}}\mathcal{I})(\overline{D}^{J_{1}}\mathcal{I})\cdots(D^{I_{t}}\mathcal{I})
(\overline{D}^{J_{t}}\mathcal{I}).$$
where $t\geq 1,$ $I_{1}+\cdots+I_{t}=I,J_{1}+\cdots+J_{t}=J$, $I_{1},J_{t}\in\mathbb{Z}_{+}^{m}$ and $\{I_{2},\ldots,I_{t},J_{1},\ldots,J_{t-1}\}\subseteq\mathbb{Z}_{+}^{m}\setminus\{0\}.$
\end{corollary}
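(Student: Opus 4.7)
My plan is to proceed by induction on $|I|+|J|$, extending the Martin--Salinas argument of 2.10 in \cite{Flag} from self-adjoint projections to the idempotent setting, with Lemma \ref{le00}(2) (or, symmetrically, Lemma \ref{le00}(4)) as the engine of the inductive step and the idempotent identities of Definition \ref{def2.1} absorbing all boundary and junction effects. Before running the main induction I would first establish the preparatory identities
\[
D^{I}\mathcal{I}\cdot\mathcal{I}=D^{I}\mathcal{I},\qquad \mathcal{I}\cdot D^{I}\mathcal{I}=0,\qquad \mathcal{I}\cdot\overline{D}^{J}\mathcal{I}=\overline{D}^{J}\mathcal{I},\qquad \overline{D}^{J}\mathcal{I}\cdot\mathcal{I}=0
\]
for all $|I|,|J|\geq 1$, via a short sub-induction from the four base relations of Definition \ref{def2.1}: differentiating $\mathcal{I}\cdot D^{I'}\mathcal{I}=0$ with respect to $\partial_{l}$ gives $\partial_{l}\mathcal{I}\cdot D^{I'}\mathcal{I}+\mathcal{I}\cdot D^{I'+e_{l}}\mathcal{I}=0$, and the correction $\partial_{l}\mathcal{I}\cdot D^{I'}\mathcal{I}$ collapses to zero via $\partial_{l}\mathcal{I}=\partial_{l}\mathcal{I}\cdot\mathcal{I}$ together with the sub-inductive hypothesis $\mathcal{I}\cdot D^{I'}\mathcal{I}=0$. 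As an immediate corollary one has the pairwise vanishings $D^{A}\mathcal{I}\cdot D^{B}\mathcal{I}=0$ and $\overline{D}^{A}\mathcal{I}\cdot\overline{D}^{B}\mathcal{I}=0$ for $|A|,|B|\geq 1$, obtained by inserting an $\mathcal{I}$ between the two factors and absorbing it on one side.

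The base of the main induction is immediate: $\mathcal{I}=\mathcal{I}\cdot\mathcal{I}$ corresponds to $t=1$ with $I_{1}=J_{1}=0$, while $\partial_{i}\mathcal{I}=\partial_{i}\mathcal{I}\cdot\mathcal{I}$ and $\overline{\partial}_{j}\mathcal{I}=\mathcal{I}\cdot\overline{\partial}_{j}\mathcal{I}$ cover the remaining unit-order cases. For the inductive step, assume the statement for all bidegrees strictly below $(I,J)$ and, without loss of generality, take $|J|\geq 1$ and write $J=J'+e_{k}$. Then $D^{I}\overline{D}^{J}\mathcal{I}=\overline{\partial}_{k}(D^{I}\overline{D}^{J'}\mathcal{I})$, and applying the Leibniz rule to the decomposition supplied by the inductive hypothesis produces a sum of monomials in which exactly one factor is differentiated by $\overline{\partial}_{k}$. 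If that factor is antiholomorphic, $\overline{\partial}_{k}\overline{D}^{J_{s}}\mathcal{I}=\overline{D}^{J_{s}+e_{k}}\mathcal{I}$ remains pure antiholomorphic and the alternating pattern is preserved; if that factor is holomorphic, Lemma \ref{le00}(2) rewrites $\overline{\partial}_{k}D^{I_{s}}\mathcal{I}=\overline{D}^{e_{k}}D^{I_{s}}\mathcal{I}$ as
\[
D^{I_{s}}\mathcal{I}\cdot\overline{D}^{e_{k}}\mathcal{I}-\overline{D}^{e_{k}}\mathcal{I}\cdot D^{I_{s}}\mathcal{I}-\sum_{p=1}^{m}\sum_{e_{p}\leq K\leq I_{s}-e_{p}}\tbinom{I_{s}}{K}D^{I_{s}-K}\mathcal{I}\cdot\overline{D}^{e_{k}}\mathcal{I}\cdot D^{K}\mathcal{I},
\]
and this block is substituted in place of $D^{I_{s}}\mathcal{I}$ inside the original monomial.

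The main obstacle is to verify that after this substitution the result is still a sum of alternating monomials of the prescribed form, because the newly inserted block meets its original neighbours at two junction points. My expectation is that every junction which would create a pure $D$--$D$ or $\overline{D}$--$\overline{D}$ adjacency vanishes by the preparatory pairwise identities, so only alternating survivors remain; every junction involving a trivial $\mathcal{I}$ at the boundary of a monomial (corresponding to $I_{1}=0$ or $J_{t}=0$) is absorbed into its neighbour using $\mathcal{I}\cdot\overline{D}^{J}\mathcal{I}=\overline{D}^{J}\mathcal{I}$ or $D^{I}\mathcal{I}\cdot\mathcal{I}=D^{I}\mathcal{I}$, so the constraint that the interior indices $I_{2},\ldots,I_{t},J_{1},\ldots,J_{t-1}$ remain nonzero is automatically respected. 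Tracking the three monomial types in Lemma \ref{le00}(2) through these two mechanisms and collecting the surviving terms with their binomial coefficients completes the inductive step and hence the corollary.
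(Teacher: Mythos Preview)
Your proposal is correct and follows essentially the same inductive strategy as the paper. The only differences are cosmetic: the paper runs the induction by adding a holomorphic derivative $\partial_{l}$ and invoking Lemma~\ref{le00}(4), whereas you add an antiholomorphic $\overline{\partial}_{k}$ and invoke Lemma~\ref{le00}(2); these are exactly dual. Your treatment is in one respect more explicit than the paper's: you isolate and prove the four absorption/annihilation identities $D^{I}\mathcal{I}\cdot\mathcal{I}=D^{I}\mathcal{I}$, $\mathcal{I}\cdot D^{I}\mathcal{I}=0$, $\mathcal{I}\cdot\overline{D}^{J}\mathcal{I}=\overline{D}^{J}\mathcal{I}$, $\overline{D}^{J}\mathcal{I}\cdot\mathcal{I}=0$ and the resulting pairwise vanishings $D^{A}\mathcal{I}\cdot D^{B}\mathcal{I}=0$, $\overline{D}^{A}\mathcal{I}\cdot\overline{D}^{B}\mathcal{I}=0$ before the main induction, whereas the paper states only the first and third (deducing them from Lemma~\ref{le00}(1),(3)) and uses the others silently when dropping junction terms.
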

\begin{proof}
From (1) and (3) of Lemma \ref{le00}, it is obtained that
$D^{I}\mathcal{I}\mathcal{I}=D^{I}\mathcal{I}$
and
$\mathcal{I}\overline{D}^{J}\mathcal{I}=\overline{D}^{J}\mathcal{I}$ for any $I,J\in\mathbb{Z}_{+}^{m}.$
Then we have that
$$(D^{I_{1}}\mathcal{I})(\overline{D}^{J_{1}}\mathcal{I})\cdots(D^{I_{t}}\mathcal{I})
(\overline{D}^{J_{t}}\mathcal{I})=(\overline{D}^{J_{1}}\mathcal{I})\cdots(D^{I_{t}}\mathcal{I})
(\overline{D}^{J_{t}}\mathcal{I}),\quad I_{1}=0,$$
$$(D^{I_{1}}\mathcal{I})(\overline{D}^{J_{1}}\mathcal{I})\cdots(D^{I_{t}}\mathcal{I})
(\overline{D}^{J_{t}}\mathcal{I})=(D^{I_{1}}\mathcal{I})
(\overline{D}^{J_{1}}\mathcal{I})\cdots(D^{I_{t}}\mathcal{I}),\quad J_{t}=0,$$
and
$$(D^{I_{1}}\mathcal{I})(\overline{D}^{J_{1}}\mathcal{I})\cdots(D^{I_{t}}\mathcal{I})
(\overline{D}^{J_{t}}\mathcal{I})=(\overline{D}^{J_{1}}\mathcal{I})\cdots(D^{I_{t}}\mathcal{I})
,\quad I_{1}=J_{t}=0.$$
From Lemma \ref{le00}, assume that $D^{I}\overline{D}^{J}\mathcal{I}(\lambda)$ may be expressed as the sum of monomials of the form
$$\pm(D^{I_{1}}\mathcal{I})(\overline{D}^{J_{1}}\mathcal{I})\cdots(D^{I_{t}}\mathcal{I})
(\overline{D}^{J_{t}}\mathcal{I}),$$
where $I_{1}+\cdots+I_{t}=I$ and $J_{1}+\cdots+J_{t}=J.$
Without losing generality, suppose that $$D^{I}\overline{D}^{J}\mathcal{I}=(D^{I_{1}}\mathcal{I})(\overline{D}^{J_{1}}\mathcal{I})
\cdots(D^{I_{t}}\mathcal{I})
(\overline{D}^{J_{t}}\mathcal{I}),$$
where $I_{1}+\cdots+I_{t}=I,J_{1}+\cdots+J_{t}=J$, $I_{1},J_{t}\in\mathbb{Z}_{+}^{m}$ and $\{I_{2},\ldots,I_{t},J_{1},\ldots,J_{t-1}\}\subseteq\mathbb{Z}_{+}^{m}\setminus\{0\}.$ By (2) and (4) of Lemma \ref{le00}, we obtain that
\begin{equation*}
 \begin{aligned}
&\quad D^{I+e_{l}}\overline{D}^{J}\mathcal{I}\\
&=D^{e_{l}}(D^{I}\overline{D}^{J}\mathcal{I})\\
&=\sum\limits_{r=1}^{t}\left[(D^{I_{1}}\mathcal{I})(\overline{D}^{J_{1}}\mathcal{I})
\cdots(D^{I_{r}+e_{l}}\mathcal{I})
(\overline{D}^{J_{r}}\mathcal{I})\cdots(D^{I_{t}}\mathcal{I})
(\overline{D}^{J_{t}}\mathcal{I})\right]\\
&\quad+\sum\limits_{r=1}^{t}\left[(D^{I_{1}}\mathcal{I})(\overline{D}^{J_{1}}\mathcal{I})
\cdots(D^{I_{r}}\mathcal{I})(D^{e_{l}}\overline{D}^{J_{r}}\mathcal{I})
\cdots(D^{I_{t}}\mathcal{I})(\overline{D}^{J_{t}}\mathcal{I})\right]\\
&=\sum\limits_{r=1}^{t}\left[(D^{I_{1}}\mathcal{I})(\overline{D}^{J_{1}}\mathcal{I})
\cdots(D^{I_{r}+e_{l}}\mathcal{I})
(\overline{D}^{J_{r}}\mathcal{I})\cdots(D^{I_{t}}\mathcal{I})
(\overline{D}^{J_{t}}\mathcal{I})\right]\\
&\quad-\sum\limits_{r=1}^{t}\sum\limits_{p=1}^{m}\sum\limits_{e_{p}\leq J'_{r}\leq J_{r}-e_{p}}\frac{J_{r}!}{J'_{r}!(J_{r}-J'_{r})!}\left[(D^{I_{1}}\mathcal{I})(\overline{D}^{J_{1}}\mathcal{I})
\cdots(D^{I_{r}}\mathcal{I})(
\overline{D}^{J_{r}-J'_{r}}\mathcal{I}D^{e_{l}}\mathcal{I}
  \overline{D}^{J'_{r}}\mathcal{I}
)
\cdots(D^{I_{t}}\mathcal{I})(\overline{D}^{J_{t}}\mathcal{I})\right]\\
&\quad -\left[(D^{I_{1}}\mathcal{I})(\overline{D}^{J_{1}}\mathcal{I})
\cdots(D^{I_{r}}\mathcal{I})(\overline{D}^{J_{r}}\mathcal{I})
\cdots(D^{I_{t}}\mathcal{I})(\overline{D}^{J_{t}}\mathcal{I})(D^{e_{l}}\mathcal{I})\right]
\end{aligned}
\end{equation*}
for $J_{t}\neq 0$, otherwise,
\begin{equation*}
 \begin{aligned}
&\quad D^{I+e_{l}}\overline{D}^{J}\mathcal{I}\\
&=\sum\limits_{r=1}^{t}\left[(D^{I_{1}}\mathcal{I})(\overline{D}^{J_{1}}\mathcal{I})
\cdots(D^{I_{r}+e_{l}}\mathcal{I})
(\overline{D}^{J_{r}}\mathcal{I})\cdots(D^{I_{t}}\mathcal{I})\right]\\
&\quad-\sum\limits_{r=1}^{t-1}\sum\limits_{p=1}^{m}\sum\limits_{e_{p}\leq J'_{r}\leq J_{r}-e_{p}}\frac{J_{r}!}{J'_{r}!(J_{r}-J'_{r})!}\left[(D^{I_{1}}\mathcal{I})(\overline{D}^{J_{1}}\mathcal{I})
\cdots(D^{I_{r}}\mathcal{I})(
\overline{D}^{J_{r}-J'_{r}}\mathcal{I}D^{e_{l}}\mathcal{I}
  \overline{D}^{J'_{r}}\mathcal{I}
)
\cdots(D^{I_{t}}\mathcal{I})\right]
\end{aligned}
\end{equation*} for $J_{t}=0$. This shows that the corollary holds for $D^{I+e_{l}}\overline{D}^{J}\mathcal{I}, 1\leq l\leq m$, similarly, we can prove that it holds for $D^{I}\overline{D}^{J+e_{k}}\mathcal{I}, 1\leq k\leq m$.
\end{proof}

\begin{lemma}\label{lm4}
Let $\mathcal{I}\in\mathcal{I}_{n}(\Omega,\mathcal{U})$. Then the covariant derivative $\mathscr{K}_{I,J}(\mathcal{I}), I,J\in\mathbb{Z}_{+}^{m},$ may be expressed as the sum of monomials of the form
$$\pm(\overline{D}^{J_{1}}\mathcal{I}D^{I_{1}}\mathcal{I})^{l_{1}}
(\overline{D}^{J_{2}}\mathcal{I}D^{I_{2}}\mathcal{I})^{l_{2}}...
(\overline{D}^{J_{t}}\mathcal{I}D^{I_{t}}\mathcal{I})^{l_{t}},$$
where $I_{r}=s_{r,p}e_{p}+m_{r,l}e_{l},$ $J_{r}=s_{r,q}e_{q}+m_{r,k}e_{k},$ $ s_{r,p},s_{r,q}\geq 0,$ $m_{r,l},m_{r,k}\in\{0,1\}$ for $I_{r},J_{r}\in\mathbb{Z}_{+}^{m}\setminus\{0\}$, $l_{1}|I_{1}|+\cdots+l_{t}|I_{t}|=|I|+1$, $l_{1}|J_{1}|+\cdots+l_{t}|J_{t}|=|J|+1$ and $1\leq p,q, l,k\leq m.$
\end{lemma}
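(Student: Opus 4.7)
The plan is to proceed by strong induction on $|I|+|J|$. By Lemma \ref{D3}(3), $\mathscr{K}_{I,J}(\mathcal{I}) = \sum_{i_p, j_q \neq 0} \mathscr{K}_{i_p e_p, j_q e_q}(\mathcal{I})$, and since the desired form is closed under finite sums, it suffices to prove the claim for each $\mathscr{K}_{i_p e_p, j_q e_q}(\mathcal{I})$. I induct on $i_p + j_q$. The base case is $\mathscr{K}(\mathcal{I}) = \overline{\partial}\mathcal{I}\,\partial\mathcal{I} = \sum_{i,j}\overline{D}^{e_j}\mathcal{I}\,D^{e_i}\mathcal{I}$; each summand is a single block with $t=l_1=1$, $J_1 = e_j$, $I_1 = e_i$, and the degree identities $l_1|I_1| = 1 = |0|+1$ and $l_1|J_1| = 1 = |0|+1$ hold, as do the support constraints trivially.

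For the inductive step on the $I$-side, I use (\ref{e2.3}) in the single-direction form $\mathscr{K}_{(i_p+1)e_p, j_q e_q}(\mathcal{I}) = \mathcal{I}\bigl(\partial_p(\mathscr{K}_{i_p e_p, j_q e_q}(\mathcal{I}))\bigr)$. The inductive hypothesis writes the argument as a sum of admissible monomials. I apply the Leibniz rule to $\partial_p$. When $\partial_p$ falls on a factor $D^{I_r}\mathcal{I}$ the result is $D^{I_r+e_p}\mathcal{I}$, and the augmented index still has two-coordinate support with one multiplicity-one entry (either $s_{r,p}$ grows by one, or a fresh $e_p$ is introduced with $m_{r,l} = 1$). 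When $\partial_p$ falls on a factor $\overline{D}^{J_r}\mathcal{I}$, the result is $\overline{D}^{J_r}D^{e_p}\mathcal{I}$, which Lemma \ref{le00}(4) rewrites as $D^{e_p}\mathcal{I}\,\overline{D}^{J_r}\mathcal{I} - \overline{D}^{J_r}\mathcal{I}\,D^{e_p}\mathcal{I}$ plus a combinatorial sum of terms $\overline{D}^{J_r-J'}\mathcal{I}\,D^{e_p}\mathcal{I}\,\overline{D}^{J'}\mathcal{I}$. Pre-multiplying by $\mathcal{I}$ annihilates every term whose leftmost factor is $D^{e_p}\mathcal{I}$ (since $\mathcal{I}\partial_p\mathcal{I} = 0$), while $\mathcal{I}\overline{D}^{J_r}\mathcal{I} = \overline{D}^{J_r}\mathcal{I}$ absorbs $\mathcal{I}$ into the leading block for the surviving terms. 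I then collapse adjacent factors using the projection identities $D^I\mathcal{I}\cdot\mathcal{I} = D^I\mathcal{I}$ and $\mathcal{I}\cdot\overline{D}^J\mathcal{I} = \overline{D}^J\mathcal{I}$ from Lemma \ref{le00}, reorganizing the output into the alternating block form $(\overline{D}^{J_r}\mathcal{I}\,D^{I_r}\mathcal{I})^{l_r}$.

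The inductive step on the $J$-side is symmetric: use (\ref{e2.4}) with $\overline{\partial}_q$ on the right and Lemma \ref{le00}(2) to rewrite $\overline{D}^{e_q}D^{I_r}\mathcal{I}$, then post-multiply by $\mathcal{I}$ and use $D^I\mathcal{I}\cdot\mathcal{I} = D^I\mathcal{I}$ together with $\partial_q\mathcal{I}\cdot\mathcal{I} = \partial_q\mathcal{I}$ to cancel mismatched right-trailing factors. The degree bookkeeping is uniform across both steps: each application of $\partial_p$ (resp.\ $\overline{\partial}_q$) raises the total $D$-degree (resp.\ $\overline{D}$-degree) of the monomial by exactly one, so $l_1|I_1|+\cdots+l_t|I_t|$ increases from $|I|+1$ to $|I|+2 = |I+e_p|+1$, matching the required identity, and analogously for $J$.

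The main obstacle is the last bookkeeping step in each inductive round: after distributing the derivative and applying Lemma \ref{le00}(2)/(4), we obtain a ragged sum of $\overline{D}$/$D$-products that is not immediately in the disciplined alternating block shape required. The argument that the outer $\mathcal{I}$ plus the projection identities from Definition \ref{def2.1} simultaneously (i) kill every sub-product whose orientation would violate the $\overline{D}\cdots D$ alternation, and (ii) preserve the two-coordinate support with one multiplicity-one coordinate, is delicate. My approach is to carry the invariant as part of the inductive hypothesis, so that whenever Lemma \ref{le00}(4) introduces a $D^{e_p}\mathcal{I}$ between two $\overline{D}$-factors, it is always absorbed either by the outer $\mathcal{I}$ (zero) or by merging with the neighboring $D^{I_r}\mathcal{I}$ via $D^{I_r}\mathcal{I}\cdot\mathcal{I} = D^{I_r}\mathcal{I}$ (which at most bumps $s_{r,p}$ or toggles an $m_{r,l}$ from $0$ to $1$); the symmetric statement handles the $J$-side.
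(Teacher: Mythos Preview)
Your overall strategy matches the paper's proof: reduce via Lemma~\ref{D3} to $\mathscr{K}_{i_p e_p, j_q e_q}(\mathcal{I})$, induct on $i_p+j_q$, apply Leibniz through the recursion (\ref{e2.3})/(\ref{e2.4}), and invoke Lemma~\ref{le00}(4) together with the projection identities to discipline the output. The degree bookkeeping you describe is also correct.

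There is, however, a concrete error in your account of what happens to the inserted $D^{e_p}\mathcal{I}$. You claim that when Lemma~\ref{le00}(4) places $D^{e_p}\mathcal{I}$ between two $\overline{D}$-factors it is ``absorbed \ldots\ by merging with the neighboring $D^{I_r}\mathcal{I}$ via $D^{I_r}\mathcal{I}\cdot\mathcal{I}=D^{I_r}\mathcal{I}$ (which at most bumps $s_{r,p}$ \ldots)''. There is no such merging identity: $D^{I_r}\mathcal{I}\cdot D^{e_p}\mathcal{I}=D^{I_r}\mathcal{I}\cdot\mathcal{I}\cdot D^{e_p}\mathcal{I}=0$, not $D^{I_r+e_p}\mathcal{I}$. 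The three families arising from Lemma~\ref{le00}(4) actually behave as follows. The term $D^{e_p}\mathcal{I}\,\overline{D}^{J_r}\mathcal{I}$ is killed on the left, either by the outer $\mathcal{I}$ or by the preceding $D^{I_1}\mathcal{I}$ (both give zero). The term $-\overline{D}^{J_r}\mathcal{I}\,D^{e_p}\mathcal{I}$ is killed on the right by the following $D^{I_r}\mathcal{I}$, since $D^{e_p}\mathcal{I}\cdot D^{I_r}\mathcal{I}=0$. The third family $-\overline{D}^{J_r-J'}\mathcal{I}\,D^{e_p}\mathcal{I}\,\overline{D}^{J'}\mathcal{I}$ \emph{survives}: it is neither absorbed nor merged, but splits the old block into a new pair $(\overline{D}^{J_r-J'}\mathcal{I}\,D^{e_p}\mathcal{I})(\overline{D}^{J'}\mathcal{I}\,D^{I_r}\mathcal{I})$. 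The paper carries this out explicitly and then checks that both pieces $J_r-J'$ and $J'$ retain the shape $s e_q+m e_k$ with $m\in\{0,1\}$; this holds because the inner sum in Lemma~\ref{le00}(4) is nonempty only along the $e_q$-direction when $J_r$ already has that shape. Once you replace ``merging'' by this annihilate-or-split dichotomy, your induction closes exactly as you outline.
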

\begin{proof}
From Lemma \ref{D3}, we obtain that each covariant derivative $\mathscr{K}_{I,J}(\mathcal{I}), I,J\in\mathbb{Z}_{+}^{m},$ may be expressed as the sum of the form $\mathscr{K}_{i_{p}e_{p},j_{q}e_{q}}(\mathcal{I}), i_{p},j_{q}\geq 0, 1\leq p,q\leq m$.
So we only need to prove that for the covariant derivative $\mathscr{K}_{i_{p}e_{p},j_{q}e_{q}}(\mathcal{I})$, $\mathscr{K}_{i_{p}e_{p},j_{q}e_{q}}(\mathcal{I})$ can be expressed as the sum of monomials of the form
$$\pm(\overline{D}^{J_{1}}\mathcal{I}D^{I_{1}}\mathcal{I})^{l_{1}}
(\overline{D}^{J_{2}}\mathcal{I}D^{I_{2}}\mathcal{I})^{l_{2}}...
(\overline{D}^{J_{t}}\mathcal{I}D^{I_{t}}\mathcal{I})^{l_{t}},$$
where $I_{r}=s_{r,p}e_{p}+m_{r,l}e_{l},$ $J_{r}=s_{r,q}e_{q}+m_{r,k}e_{k},$ $0\leq s_{r,p}\leq i_{p},$ $0\leq s_{r,q}\leq j_{q},$ $m_{r,l},m_{r,k}\in\{0,1\}$ for $I_{r},J_{r}\in\mathbb{Z}_{+}^{m}\setminus\{0\}$, $l_{1}|I_{1}|+\cdots+l_{t}|I_{t}|=i_{p}+1$, $l_{1}|J_{1}|+\cdots+l_{t}|J_{t}|=j_{q}+1$ and $1\leq l,k\leq m.$
Since $\partial_{l}\mathcal{I}\mathcal{I}=\partial_{l}\mathcal{I}$, $\mathcal{I}\partial_{l}\mathcal{I}=0$ and $\overline{\partial}_{k}\mathcal{I}=\mathcal{I}\overline{\partial}_{k}\mathcal{I}$ for any $1\leq l,k\leq m$, we have that
$$\mathcal{I}\overline{\partial}_{k}\partial_{l}\mathcal{I}=
-\overline{\partial}_{k}\mathcal{I}\partial_{l}\mathcal{I}\quad\text{
and}\quad \overline{\partial}_{k}\partial_{l}\mathcal{I}=
\partial_{l}\mathcal{I}\overline{\partial}_{k}\mathcal{I}
-\overline{\partial}_{k}\mathcal{I}\partial_{l}\mathcal{I}.$$
Thus,
by Theorem \ref{thm1}, we have that
$\mathscr{K}(\mathcal{I})=
\overline{\partial}\mathcal{I}\partial\mathcal{I}=
\sum\limits_{1\leq l,k\leq m}\left(\overline{D}^{e_{k}}\mathcal{I}D^{e_{l}}\mathcal{I}\right)$,
$$\mathscr{K}_{e_{p},0}(\mathcal{I})
=\mathcal{I}(\partial_{p}\mathscr{K}(\mathcal{I}))
=\mathcal{I}\partial_{p}\overline{\partial}\mathcal{I}\partial\mathcal{I}+
\mathcal{I}\overline{\partial}\mathcal{I}\partial_{p}\partial\mathcal{I}
=\overline{\partial}\mathcal{I}\partial_{p}\partial\mathcal{I}=
\sum\limits_{1\leq l,k\leq m}\left(\overline{D}^{e_{k}}\mathcal{I}D^{e_{p}+e_{l}}\mathcal{I}\right),$$
$$\mathscr{K}_{0,e_{q}}(\mathcal{I})
=(\overline{\partial}_{q}\mathscr{K}(\mathcal{I}))\mathcal{I}
=\overline{\partial}_{q}\overline{\partial}\mathcal{I}\partial\mathcal{I}\mathcal{I}+
\overline{\partial}\mathcal{I}\overline{\partial}_{q}\partial\mathcal{I}\mathcal{I}=
\overline{\partial}_{q}\overline{\partial}\mathcal{I}\partial\mathcal{I}=
\sum\limits_{1\leq l,k\leq m}(\overline{D}^{e_{q}+e_{k}}\mathcal{I}D^{e_{l}}\mathcal{I}),$$
\begin{equation*}
 \begin{aligned}
\mathscr{K}_{0+e_{p},e_{q}}(\mathcal{I})&
=\mathcal{I}(\partial_{p}(\mathscr{K}_{0,e_{q}}(\mathcal{I})))\\
&=\overline{\partial}_{q}\overline{\partial}\mathcal{I}\partial_{p}\partial\mathcal{I}
-\overline{\partial}\mathcal{I}\partial_{p}\mathcal{I}\overline{\partial}_{q}\mathcal{I}
\partial\mathcal{I}-\overline{\partial}_{q}\mathcal{I}\partial_{p}\mathcal{I}
\overline{\partial}\mathcal{I}\partial\mathcal{I}\\
&=\sum_{1\leq l,k\leq m}\left(\overline{D}^{e_{q}+e_{k}}\mathcal{I} D^{e_{p}+e_{l}}\mathcal{I}-\overline{D}^{e_{k}}\mathcal{I}D^{e_{p}}\overline{D}^{e_{q}}\mathcal{I} D^{e_{l}}\mathcal{I}-\overline{D}^{e_{q}}\mathcal{I} D^{e_{p}}\mathcal{I}\overline{D}^{e_{k}}\mathcal{I} D^{e_{l}}\mathcal{I}\right),
\end{aligned}
\end{equation*}
and
\begin{equation*}
 \begin{aligned}
\mathscr{K}_{e_{p},0+e_{q}}(\mathcal{I})&=(\overline{\partial}_{q}(\mathscr{K}_{e_{p},0}
(\mathcal{I})))\mathcal{I}\\
&=\overline{\partial}_{q}\overline{\partial}\mathcal{I}
\partial_{p}\partial\mathcal{I}-\overline{\partial}\mathcal{I}\partial_{p}\mathcal{I}
\overline{\partial}_{q}\mathcal{I}\partial\mathcal{I}-\overline{\partial}\mathcal{I}
\partial\mathcal{I}\overline{\partial}_{q}\mathcal{I}\partial_{p}\mathcal{I}\\
&=\sum_{1\leq l,k\leq m}\left(\overline{D}^{e_{q}+e_{k}}\mathcal{I} D^{e_{p}+e_{l}}\mathcal{I}-\overline{D}^{e_{k}}\mathcal{I}D^{e_{p}}\overline{D}^{e_{q}}
\mathcal{I} D^{e_{l}}\mathcal{I}-\overline{D}^{e_{k}}\mathcal{I} D^{e_{l}}\mathcal{I}\overline{D}^{e_{q}}\mathcal{I} D^{e_{p}}\mathcal{I}\right).\\
\end{aligned}
\end{equation*}
It follows that this lemma holds for any $0\leq i_{p},j_{q}\leq1$.
Assume that this lemma holds for some $i_{p},j_{q}\in\mathbb{Z}$.
Now we just need to prove that it also holds for $\mathscr{K}_{i_{p}e_{p}+e_{p},j_{q}e_{q}}(\mathcal{I})$ and $\mathscr{K}_{i_{p}e_{p},j_{q}e_{q}+e_{q}}(\mathcal{I})$.
Without loss of generality, suppose that
$$\mathscr{K}_{i_{p}e_{p},j_{q}e_{q}}(\mathcal{I})
:=(\overline{D}^{J_{1}}\mathcal{I}D^{I_{1}}\mathcal{I})^{l_{1}},$$ where
$I_{1}=s_{1,p}e_{p}+m_{1,l}e_{l},$ $J_{1}=s_{1,q}e_{q}+m_{1,k}e_{k},$ $0\leq s_{1,p}\leq i_{p},$ $0\leq s_{1,q}\leq j_{q},$ $m_{1,l},m_{1,k}\in\{0,1\}$, $l_{1}>0$ and
$l_{1}|I_{1}|=i_{p}+1, l_{1}|J_{1}|=j_{q}+1$.
Then from Lemma \ref{le00}, $\mathcal{I}D^{e_{p}}\mathcal{I}=0,$ $D^{e_{p}}\mathcal{I}D^{I_{1}}\mathcal{I}=D^{I_{1}}\mathcal{I}D^{e_{p}}\mathcal{I}=0,$ $\mathcal{I}\overline{D}^{J_{1}}\mathcal{I}=\overline{D}^{J_{1}}\mathcal{I},$
and $D^{I_{1}}\mathcal{I}\mathcal{I}=D^{I_{1}}\mathcal{I},$
 we know that
\begin{equation}
  \begin{aligned}
&\quad \mathscr{K}_{i_{p}e_{p}+e_{p},j_{q}e_{q}}(\mathcal{I})\\
&=\mathcal{I}(\partial_{p}(\mathscr{K}_{i_{p}e_{p},j_{q}e_{q}}(\mathcal{I})))\\
&=\mathcal{I}(D^{e_{p}}(\overline{D}^{J_{1}}\mathcal{I}D^{I_{1}}\mathcal{I})^{l_{1}})\\
&=\mathcal{I}\sum\limits_{r=0}^{l_{1}-1}\left[(\overline{D}^{J_{1}}\mathcal{I}D^{I_{1}}\mathcal{I})^{r}
(D^{e_{p}}\overline{D}^{J_{1}}\mathcal{I}D^{I_{1}}\mathcal{I}
+\overline{D}^{J_{1}}\mathcal{I}D^{I_{1}+e_{p}}\mathcal{I})
(\overline{D}^{J_{1}}\mathcal{I}D^{I_{1}}\mathcal{I})^{l_{1}-1-r}\right]\\
&=\mathcal{I}\left(D^{e_{p}}\mathcal{I}\overline{D}^{J_{1}}\mathcal{I}-
\overline{D}^{J_{1}}\mathcal{I}D^{e_{p}}\mathcal{I}-\sum_{e_{q}\leq J_{1}^{'}\leq J_{1}-e_{q}}\frac{J_{1}!}{J_{1}^{'}!(J_{1}-J_{1}^{'})!}
(\overline{D}^{J_{1}-J_{1}^{'}}\mathcal{I}D^{e_{p}}\mathcal{I}\overline{D}^{J_{1}^{'}}\mathcal{I})
\right)D^{I_{1}}\mathcal{I}(\overline{D}^{J_{1}}\mathcal{I}D^{I_{1}}\mathcal{I})^{l_{1}-1}\\
&\quad+\overline{D}^{J_{1}}\mathcal{I}D^{I_{1}+e_{p}}\mathcal{I}
(\overline{D}^{J_{1}}\mathcal{I}D^{I_{1}}\mathcal{I})^{l_{1}-1}+\mathcal{I}\sum_{r=1}^{l_{1}-1}\left[(\overline{D}^{J_{1}}\mathcal{I}D^{I_{1}}\mathcal{I})^{r}
\left(D^{e_{p}}\mathcal{I}\overline{D}^{J_{1}}\mathcal{I}-
\overline{D}^{J_{1}}\mathcal{I}D^{e_{p}}\mathcal{I}\right.\right.\\
&\quad-\sum_{e_{q}\leq J_{1}^{'}\leq J_{1}-e_{q}}\frac{J_{1}!}{J_{1}'!(J_{1}-J_{1}^{'})!}
(\overline{D}^{J_{1}-J_{1}'}\mathcal{I}D^{e_{p}}\mathcal{I}\overline{D}^{J_{1}'}\mathcal{I})
\Big)D^{I_{1}}\mathcal{I}(\overline{D}^{J_{1}}\mathcal{I}D^{I_{1}}\mathcal{I})^{l_{1}-r-1}\\
&\quad+\left.(\overline{D}^{J_{1}}\mathcal{I}D^{I_{1}}\mathcal{I})^{r}(\overline{D}^{J_{1}}
\mathcal{I}D^{I_{1}+e_{p}}\mathcal{I})(\overline{D}^{J_{1}}\mathcal{I}D^{I_{1}}\mathcal{I})
^{l_{1}-r-1}\right]
\nonumber
  \end{aligned}
\end{equation}
\begin{equation}
  \begin{aligned}
&=\sum\limits_{r=0}^{l_{1}-1}\left[(\overline{D}^{J_{1}}\mathcal{I}D^{I_{1}}\mathcal{I})^{r}
(\overline{D}^{J_{1}}\mathcal{I}D^{I_{1}+e_{p}}\mathcal{I})
(\overline{D}^{J_{1}}\mathcal{I}D^{I_{1}}\mathcal{I})^{l_{1}-1-r}\right] \\
&\quad-\sum\limits_{r=0}^{l_{1}-1}
\left[(\overline{D}^{J_{1}}\mathcal{I}D^{I_{1}}\mathcal{I})^{r}
\sum_{e_{q}\leq J_{1}^{'}\leq J_{1}-e_{q}}\frac{J_{1}!}{J_{1}'!(J_{1}-J_{1}^{'})!}
(\overline{D}^{J_{1}-J_{1}'}\mathcal{I}D^{e_{p}}\mathcal{I}\overline{D}^{J_{1}'}\mathcal{I}
D^{I_{1}}\mathcal{I})(\overline{D}^{J_{1}}\mathcal{I}D^{I_{1}}\mathcal{I})^{l_{1}-1-r}\right].
\nonumber
  \end{aligned}
\end{equation}
It follows that this lemma holds for $\mathscr{K}_{i_{p}e_{p}+e_{p},j_{k}e_{k}}(\mathcal{I})$. Similarly, we also prove that this lemma holds for $\mathscr{K}_{i_{p}e_{p},j_{k}e_{k}+e_{k}}(\mathcal{I})$. This completes the proof.
\end{proof}

{\bf Proof of the Theorem \ref{lm1}.}
Since $\mathcal{I}_{1}\sim_{u}\mathcal{I}_{2}$, there is an unitary $U\in\mathcal{U}$ such that $$U\mathcal{I}_{1}(\lambda)=\mathcal{I}_{2}(\lambda)U,\quad \lambda\in\Omega.$$
Thus,
$$UD^{i_{l}e_{l}+k_{p}e_{p}}\mathcal{I}_{1}(\lambda)=D^{i_{l}e_{l}+k_{p}e_{p}}\mathcal{I}_{2}(\lambda)U
\quad\textit{and}\quad
U\overline{D}^{j_{k}e_{k}+m_{q}e_{q}}\mathcal{I}_{1}(\lambda)=\overline{D}^{j_{k}e_{k}+m_{q}e_{q}}\mathcal{I}_{2}(\lambda)U$$
for any $i_{l},k_{p},j_{k},m_{q}\geq 0.$ This means that
$$U\overline{D}^{j_{k}e_{k}+m_{q}e_{q}}\mathcal{I}_{1}(\lambda)D^{i_{l}e_{l}+k_{p}e_{p}}\mathcal{I}_{1}(\lambda)=
\overline{D}^{j_{k}e_{k}+m_{q}e_{q}}\mathcal{I}_{1}(\lambda)D^{i_{l}e_{l}+k_{p}e_{p}}\mathcal{I}_{1}(\lambda)U.$$
By Lemma \ref{lm4}, the covariant derivative $\mathscr{K}_{I,J}(\mathcal{I}), I,J\in\mathbb{Z}_{+}^{m},$ may be expressed by as a sum of monomials of form
$$\pm(\overline{D}^{J_{1}}\mathcal{I}D^{I_{1}}\mathcal{I})^{l_{1}}
(\overline{D}^{J_{2}}\mathcal{I}D^{I_{2}}\mathcal{I})^{l_{2}}...
(\overline{D}^{J_{t}}\mathcal{I}D^{I_{t}}\mathcal{I})^{l_{t}},$$
where $I_{r}=s_{r,l}e_{l}+m_{r,p}e_{p},$ $J_{r}=s_{r,j}e_{j}+m_{r,q}e_{q}.$
Then $U\mathscr{K}_{I,J}(\mathcal{I}_{1})(\lambda)U^{*}=\mathscr{K}_{I,J}(\mathcal{I}_{2})(\lambda)$ for any $\lambda\in\Omega$.

From Theorem \ref{thm1}, we obtain the relationship between the curvature and the classical curvature of  extended holomorphic curves.
The unitary equivalence of extended holomorphic curves from Theorem \ref{lm1} implies the unitary equivalence of their curvatures, and now we give the relationship between the classical curvatures in this case.

\begin{proposition}
Let $\mathcal{I}_{1},~\mathcal{I}_{2}\in\mathcal{I}_{n}(\Omega,\mathcal{U})$, if $\mathcal{I}_{1}\sim_{u}\mathcal{I}_{2},$ then exists an invertible operator $Y_{\lambda}$ such that $$Y_{\lambda}\mathcal{K}_{\mathcal{I}_{1},\lambda^{I},\overline{\lambda}^{J}}(\lambda)=
\mathcal{K}_{\mathcal{I}_{2},\lambda^{I},\overline{\lambda}^{J}}(\lambda)Y_{\lambda},\quad \lambda\in\Omega.$$
\end{proposition}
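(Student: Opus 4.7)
The plan is to transfer the unitary intertwining of the extended curvatures (Theorem \ref{lm1}) to a similarity of the classical curvatures by using Theorem \ref{thm1} twice and a frame-change argument on the bundle underlying $\mathcal{I}_2$. Write $\mathcal{I}_i(\lambda)=F_i(\lambda)H_i^{-1}(\lambda)G_i^*(\lambda)$ with $H_i=G_i^*F_i$ for $i=1,2$. Since $U$ is unitary and $U\mathcal{I}_1=\mathcal{I}_2U$, the operator-valued holomorphic map $UF_1(\lambda)$ has range equal to $\mathrm{Ran}\,\mathcal{I}_2(\lambda)=\mathrm{Ran}\,F_2(\lambda)$ at every $\lambda$, so a standard frame-change produces a holomorphic invertible $n\times n$ matrix $\phi(\lambda)$ satisfying
\[
UF_1(\lambda)=F_2(\lambda)\phi(\lambda),\qquad \lambda\in\Omega.
\]
This $\phi(\lambda)$ will be the candidate for $Y_\lambda$; concretely one may take $\phi(\lambda)=H_2^{-1}(\lambda)G_2^*(\lambda)UF_1(\lambda)$, whose invertibility is forced by $H_1=(UG_1)^*(UF_1)$ being invertible.

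Next I combine the two structural tools. Theorem \ref{thm1} applied to $\mathcal{I}_1$ gives
$\mathscr{K}_{I,J}(\mathcal{I}_1)F_1=-F_1\mathcal{K}_{\mathcal{I}_1,\lambda^I,\overline{\lambda}^J}$. Multiplying on the left by $U$ and applying Theorem \ref{lm1}, which yields $U\mathscr{K}_{I,J}(\mathcal{I}_1)=\mathscr{K}_{I,J}(\mathcal{I}_2)U$, I obtain
\[
\mathscr{K}_{I,J}(\mathcal{I}_2)\,UF_1 \;=\; -\,UF_1\,\mathcal{K}_{\mathcal{I}_1,\lambda^I,\overline{\lambda}^J}.
\]
Substituting $UF_1=F_2\phi$ on both sides and using Theorem \ref{thm1} now applied to $\mathcal{I}_2$ on the left side produces
\[
-F_2\,\mathcal{K}_{\mathcal{I}_2,\lambda^I,\overline{\lambda}^J}\,\phi \;=\; -F_2\,\phi\,\mathcal{K}_{\mathcal{I}_1,\lambda^I,\overline{\lambda}^J},
\]
i.e.\ $F_2\bigl(\mathcal{K}_{\mathcal{I}_2,\lambda^I,\overline{\lambda}^J}\phi-\phi\,\mathcal{K}_{\mathcal{I}_1,\lambda^I,\overline{\lambda}^J}\bigr)=0$ on $\Omega$. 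Multiplying on the left by $H_2^{-1}G_2^*$ and invoking $H_2^{-1}G_2^*F_2=I_n$ cancels the factor $F_2$ and yields $\phi(\lambda)\mathcal{K}_{\mathcal{I}_1,\lambda^I,\overline{\lambda}^J}(\lambda)=\mathcal{K}_{\mathcal{I}_2,\lambda^I,\overline{\lambda}^J}(\lambda)\phi(\lambda)$, so $Y_\lambda:=\phi(\lambda)$ does the job.

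The main obstacle I expect is the rigorous construction of the holomorphic invertible intertwiner $\phi(\lambda)$ from the two operator-valued maps $UF_1$ and $F_2$ in the Hilbert-module setting $\mathscr{L}(\mathcal{H},l^2(\mathbb{N},B))$: one needs holomorphy of $\phi$, invertibility at every $\lambda\in\Omega$, and a clean ``cancellation-of-$F_2$'' step. The explicit formula $\phi=H_2^{-1}G_2^*\,UF_1$ handles holomorphy and invertibility simultaneously (its inverse is $H_1^{-1}G_1^*U^*F_2$, as a short computation using $U\mathcal{I}_1=\mathcal{I}_2U$ and $H_i=G_i^*F_i$ shows), while the identity $H_2^{-1}G_2^*F_2=I_n$ makes the cancellation at the end of the second paragraph rigorous; once these ingredients are in place, the remainder of the argument is purely algebraic and driven by Theorems \ref{thm1} and \ref{lm1}.
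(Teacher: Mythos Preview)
Your proposal is correct and follows essentially the same approach as the paper: both define $Y_\lambda=H_2^{-1}(\lambda)G_2^*(\lambda)UF_1(\lambda)$ with inverse $H_1^{-1}(\lambda)G_1^*(\lambda)U^*F_2(\lambda)$, and both combine Theorem~\ref{lm1} with Theorem~\ref{thm1} to obtain the intertwining. The only cosmetic difference is that the paper uses the internal identity $\mathscr{K}_{I,J}(\mathcal{I}_i)=-F_i\,\mathcal{K}_{\mathcal{I}_i,\lambda^I,\overline{\lambda}^J}\,H_i^{-1}G_i^*$ from the proof of Theorem~\ref{thm1} and then sandwiches by $H_2^{-1}G_2^*$ and $F_1$, whereas you use only the stated form $\mathscr{K}_{I,J}(\mathcal{I})F=-F\mathcal{K}_{\mathcal{I},\lambda^I,\overline{\lambda}^J}$ together with the frame-change $UF_1=F_2\phi$ (which indeed follows from $\mathcal{I}_2UF_1=U\mathcal{I}_1F_1=UF_1$) and the cancellation $H_2^{-1}G_2^*F_2=I_n$; the two computations are equivalent.
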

\begin{proof}
If $\mathcal{I}_{1}\sim_{u}\mathcal{I}_{2},$ there is a unitary operator $U\in\mathcal{U}$ such that $U\mathcal{I}_{1}(\lambda)=\mathcal{I}_{2}(\lambda)U$ for any $\lambda\in\Omega.$
By Theorem \ref{lm1}, we obtain that $U\mathscr{K}_{I,J}(\mathcal{I}_{1})(\lambda)=\mathscr{K}_{I,J}(\mathcal{I}_{2})(\lambda)U$ for any $I,J\in\mathbb{Z}_{+}^{m}$.
Setting
$$\mathcal{I}_{1}(\lambda)= F_{1}(\lambda)H_{1}^{-1}(\lambda)G_{1}^{*}(\lambda) \quad\textup{ and }\quad \mathcal{I}_{2}(\lambda)= F_{2}(\lambda)H_{2}^{-1}(\lambda)G_{2}^{*}(\lambda),$$ where $H_{i}(\lambda)=\langle F_{i}(\lambda),G_{i}(\lambda)\rangle=G_{i}^{*}(\lambda) F_{i}(\lambda)$ for $i=1,2.$
From Theorem \ref{thm1}, for any $\lambda\in\Omega$ and $I,J\in\mathbb{Z}_{+}^{m}$, we have that
$$U\left(F_{1}(\lambda)\mathcal{K}_{\mathcal{I}_{1},\lambda^{I},\overline{\lambda}^{J}}
(\lambda)H_{1}^{-1}(\lambda)G_{1}^{*}(\lambda)\right)=
\left(F_{2}(\lambda)\mathcal{K}_{\mathcal{I}_{2},\lambda^{I},\overline{\lambda}^{J}}(\lambda)
H_{2}^{-1}(\lambda)G_{2}^{*}(\lambda)\right)U,$$
that is
$$H_{2}^{-1}(\lambda)G_{2}^{*}(\lambda)U F_{1}(\lambda)\mathcal{K}_{\mathcal{I}_{1},\lambda^{I},\overline{\lambda}^{J}}(\lambda)
=\mathcal{K}_{\mathcal{I}_{2},\lambda^{I},\overline{\lambda}^{J}}(\lambda)H_{2}^{-1}(\lambda)
G_{2}^{*}(\lambda)U F_{1}(\lambda).$$
Letting $Y_{\lambda}:=H_{2}^{-1}(\lambda)G_{2}^{*}(\lambda)U F_{1}(\lambda)$ and $Z_{\lambda}:=H_{1}^{-1}(\lambda)G_{1}^{*}(\lambda)U^{*} F_{2}(\lambda),$
then
\begin{equation}
\begin{aligned}
Y_{\lambda}Z_{\lambda}
&=H_{2}^{-1}(\lambda)G_{2}^{*}(\lambda)U F_{1}(\lambda)H_{1}^{-1}(\lambda)G_{1}^{*}(\lambda)U^{*} F_{2}(\lambda)\\
&=H_{2}^{-1}(\lambda)G_{2}^{*}(\lambda)U\mathcal{I}_{1}(\lambda)U^{*} F_{2}(\lambda)\\
&=H_{2}^{-1}(\lambda)G_{2}^{*}(\lambda)\mathcal{I}_{2}(\lambda) F_{2}(\lambda)\\
&=H_{2}^{-1}(\lambda)G_{2}^{*}(\lambda) F_{2}(\lambda)H_{2}^{-1}(\lambda)G_{2}^{*}(\lambda) F_{2}(\lambda)\\
&=H_{2}^{-1}(\lambda)H_{2}(\lambda)H_{2}^{-1}(\lambda)H_{2}(\lambda)\\
&=I_{n}.
\nonumber
\end{aligned}
\end{equation}
Similarly, we have that $Z_{\lambda}Y_{\lambda}=H_{1}^{-1}(\lambda)G_{1}^{*}(\lambda)U^{*} F_{2}(\lambda)H_{2}^{-1}(\lambda)G_{2}^{*}(\lambda)U F_{1}(\lambda)=I_{n}.$ Hence $$Y_{\lambda}\mathcal{K}_{\mathcal{I}_{1},\lambda^{I},\overline{\lambda}^{J}}(\lambda)= \mathcal{K}_{\mathcal{I}_{2},\lambda^{I},\overline{\lambda}^{J}}(\lambda)Y_{\lambda},\quad \lambda\in\Omega$$ for the invertible operator $Y_{\lambda}=H_{2}^{-1}(\lambda)G_{2}^{*}(\lambda)U F_{1}(\lambda).$
\end{proof}

In the last part of this subsection we consider the similar properties of the extended holomorphic curve and show some interesting applications to the Cowen-Douglas class.

\begin{lemma}\label{lm3}
Let $\mathcal{I}_{1},\mathcal{I}_{2}\in\mathcal{I}_{n}(\Omega,\mathcal{U})$. Then the following statements are equivalent:
\begin{itemize}
  \item [(1)]$\mathcal{I}_{1}\sim_{s}\mathcal{I}_{2}$.
  \item[(2)]There is an invertible operator $X\in\mathcal{U}$ such that
$$XD^{I}\mathcal{I}_{1}(\lambda)\overline{D}^{J}\mathcal{I}_{1}(\lambda)=
D^{I}\mathcal{I}_{2}(\lambda)\overline{D}^{J}\mathcal{I}_{2}(\lambda)X,\quad \lambda\in\Omega, \, I,J\in\mathbb{Z}_{+}^{m}.$$
\end{itemize}
\end{lemma}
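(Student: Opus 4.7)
The plan is to prove the equivalence in two directions, both of which turn out to be direct consequences of the definitions once we exploit the idempotence $\mathcal{I}_{j}(\lambda)^{2}=\mathcal{I}_{j}(\lambda)$ of the values of an extended holomorphic curve.

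For (1)$\Rightarrow$(2), I would start from the similarity relation $X\mathcal{I}_{1}(\lambda)=\mathcal{I}_{2}(\lambda)X$ for $\lambda\in\Omega$, and differentiate both sides. Because $X\in\mathcal{U}$ is a constant operator (independent of $\lambda$ and $\overline{\lambda}$), applying $D^{I}$ and $\overline{D}^{J}$ separately yields the two identities
\[
XD^{I}\mathcal{I}_{1}(\lambda)=D^{I}\mathcal{I}_{2}(\lambda)X
\quad\text{and}\quad
X\overline{D}^{J}\mathcal{I}_{1}(\lambda)=\overline{D}^{J}\mathcal{I}_{2}(\lambda)X
\]
for all $I,J\in\mathbb{Z}_{+}^{m}$. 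Composing these, the middle $X$ can be moved across $\overline{D}^{J}\mathcal{I}_{1}$ via the second identity, producing
\[
XD^{I}\mathcal{I}_{1}(\lambda)\overline{D}^{J}\mathcal{I}_{1}(\lambda)
=D^{I}\mathcal{I}_{2}(\lambda)\cdot X\overline{D}^{J}\mathcal{I}_{1}(\lambda)
=D^{I}\mathcal{I}_{2}(\lambda)\overline{D}^{J}\mathcal{I}_{2}(\lambda)X,
\]
which is exactly the identity claimed in (2).

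For (2)$\Rightarrow$(1), the key observation is that the hypothesis is assumed to hold for \emph{all} $I,J\in\mathbb{Z}_{+}^{m}$, so in particular for $I=J=0$. Substituting $I=J=0$ collapses the identity to $X\mathcal{I}_{1}(\lambda)\mathcal{I}_{1}(\lambda)=\mathcal{I}_{2}(\lambda)\mathcal{I}_{2}(\lambda)X$. Since $\mathcal{I}_{j}(\lambda)\in\mathcal{I}(\mathcal{U})$ is an idempotent, $\mathcal{I}_{j}(\lambda)^{2}=\mathcal{I}_{j}(\lambda)$ for $j=1,2$, and the equation reduces to $X\mathcal{I}_{1}(\lambda)=\mathcal{I}_{2}(\lambda)X$ for every $\lambda\in\Omega$, which is precisely the definition of $\mathcal{I}_{1}\sim_{s}\mathcal{I}_{2}$.

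Neither direction presents a genuine obstacle; the content of the lemma lies in noticing that the $I=J=0$ case of (2) already encodes all of (1), while the forward direction propagates the intertwining to arbitrary mixed partials simply because $X$ does not depend on $\lambda$. No appeal to the explicit representation $\mathcal{I}=FH^{-1}G^{*}$, to Theorem~\ref{thm1}, or to the combinatorial identities in Lemma~\ref{le00} is required for this argument.
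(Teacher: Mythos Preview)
Your argument is correct, and the forward direction (1)$\Rightarrow$(2) matches the paper's proof exactly.

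For (2)$\Rightarrow$(1), however, you take a genuinely shorter route than the paper. The paper invokes Corollary~\ref{cor2} to write every mixed derivative $D^{I}\overline{D}^{J}\mathcal{I}_{k}$ as a sum of monomials $\pm(D^{I_{1}}\mathcal{I}_{k})(\overline{D}^{J_{1}}\mathcal{I}_{k})\cdots(D^{I_{t}}\mathcal{I}_{k})(\overline{D}^{J_{t}}\mathcal{I}_{k})$, concludes from hypothesis (2) that $X$ intertwines all such mixed partials, and then feeds this into the real-analytic Taylor expansion of $\mathcal{I}_{1},\mathcal{I}_{2}$ around an arbitrary base point to recover $X\mathcal{I}_{1}=\mathcal{I}_{2}X$. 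Your observation that the single case $I=J=0$ together with $\mathcal{I}_{j}(\lambda)^{2}=\mathcal{I}_{j}(\lambda)$ already delivers the conclusion bypasses all of this. The paper's detour does yield the intermediate fact that $X$ intertwines every $D^{I}\overline{D}^{J}\mathcal{I}_{k}$, which is of independent interest and connects to the unitary-invariant discussion preceding Corollary~\ref{cor2}, but it is not needed for the lemma as stated.
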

\begin{proof}
On the one hand, if
$\mathcal{I}_{1}\sim_{s}\mathcal{I}_{2}$, there is an invertible operator $X\in\mathcal{U}$ such that $X\mathcal{I}_{1}(\lambda)=\mathcal{I}_{2}(\lambda)X$ for any $\lambda\in\Omega$.
We have that
$$XD^{I}\mathcal{I}_{1}(\lambda)=D^{I}\mathcal{I}_{2}(\lambda)X\quad\textup{and}\quad X\overline{D}^{J}\mathcal{I}_{1}(\lambda)=\overline{D}^{J}\mathcal{I}_{2}(\lambda)X,
\quad\lambda\in\Omega,I,J\in\mathbb{Z}_{+}^{m}.$$
It follows that
$$X\overline{D}^{J}\mathcal{I}_{1}(\lambda)D^{I}\mathcal{I}_{1}(\lambda)=\overline{D}^{J}\mathcal{I}_{2}(\lambda)D^{I}\mathcal{I}_{2}(\lambda)X,\quad \lambda\in\Omega,\,I,J\in\mathbb{Z}_{+}^{m}.$$

On the other hand,
if
$$X\overline{D}^{J}\mathcal{I}_{1}(\lambda)D^{I}\mathcal{I}_{1}(\lambda)=\overline{D}^{J}\mathcal{I}_{2}(\lambda)D^{I}\mathcal{I}_{2}(\lambda)X,\quad \lambda\in\Omega,\,I,J\in\mathbb{Z}_{+}^{m}$$
for some invertible operator $X\in\mathcal{U}$.
By Corollary \ref{cor2}, we know that for any $I,J\in\mathbb{Z}_{+}^{m},$ $D^{I}\overline{D}^{J}\mathcal{I}_{k}$ may be expressed by as a sum of monomials of the form $$\pm(D^{I_{1}}\mathcal{I}_{k})(\overline{D}^{J_{1}}\mathcal{I}_{k})\cdots(D^{I_{t}}\mathcal{I}_{k})
(\overline{D}^{J_{t}}\mathcal{I}_{k}),$$
where $k=1,2$, $I_{1}+\cdots+I_{t}=I,~J_{1}+\cdots+J_{t}=J.$ Thus,
$$X\overline{D}^{J}D^{I}\mathcal{I}_{1}(\lambda)=
\overline{D}^{J}D^{I}\mathcal{I}_{2}(\lambda)X,
\quad \lambda\in\Omega,\,I,J\in\mathbb{Z}_{+}^{m}.$$
Note that for any $\lambda_{0}\in\Omega$, there exists an open neighborhood $\Omega_{0}\subset\Omega$ of $\lambda_{0}$ such that
\begin{equation*}
\begin{aligned}
&\mathcal{I}_{1}(\lambda)=\sum_{I,J\in\mathbb{Z}_{+}^{m}}\frac{D^{I}\overline{D}^{J}
\mathcal{I}_{1}(\lambda_{0})}{I!J!}(\lambda-\lambda_{0})^{I}
(\overline{\lambda}-\overline{\lambda}_{0})^{J},\\ &\mathcal{I}_{2}(\lambda)=\sum_{I,J\in\mathbb{Z}_{+}^{m}}
\frac{D^{I}\overline{D}^{J}\mathcal{I}_{2}(\lambda_{0})}{I!J!}
(\lambda-\lambda_{0})^{I}(\overline{\lambda}-\overline{\lambda}_{0})^{J},
\quad\lambda\in\Omega_{0},~I,J\in\mathbb{Z}_{+}^{m}.
\end{aligned}
\end{equation*}
Then we have that $X\mathcal{I}_{1}(\lambda)=\mathcal{I}_{2}(\lambda)X$ for any $\lambda\in\Omega$, that is $\mathcal{I}_{1}\sim_{s}\mathcal{I}_{2}$.
\end{proof}

\begin{proposition}\label{p2}
Let $\textbf{T}=(T_{1},\cdots T_{m})\in B_{n}^{m}(\Omega)\cap\mathcal{L}(\mathcal{H}_{1})^{m}$ and $\textbf{S}=(S_{1},\cdots,S_{m})\in B_{n}^{m}(\Omega)\cap\mathcal{L}(\mathcal{H}_{2})^{m}.$ Then $\textbf{T}\sim_{s}\textbf{S}$ if and only if $\mathcal{I}_{\textbf{T}}\sim_{s}\mathcal{I}_{\textbf{S}}.$
\end{proposition}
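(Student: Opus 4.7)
The plan is to exploit the fundamental fact that, for each $\lambda\in\Omega$, the idempotent $\mathcal{I}_{\textbf{T}}(\lambda)=F_{\textbf{T}}(\lambda)(G_{\textbf{T}}^{*}(\lambda)F_{\textbf{T}}(\lambda))^{-1}G_{\textbf{T}}^{*}(\lambda)$ has range exactly $\ker\mathscr{D}_{\textbf{T}-\lambda}$, the fiber of the Cowen--Douglas bundle $E_{\textbf{T}}$ at $\lambda$, as stated right after Definition \ref{def2.2}. Both implications then reduce to tracking how this range transforms under an invertible intertwiner.

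For the forward direction ($\textbf{T}\sim_{s}\textbf{S}\Rightarrow\mathcal{I}_{\textbf{T}}\sim_{s}\mathcal{I}_{\textbf{S}}$), I would start from an invertible $X:\mathcal{H}_{1}\to\mathcal{H}_{2}$ with $XT_{i}=S_{i}X$ for $1\le i\le m$. From $(S_{i}-\lambda_{i})Xv=X(T_{i}-\lambda_{i})v=0$, one sees that $X$ maps $\ker\mathscr{D}_{\textbf{T}-\lambda}$ bijectively onto $\ker\mathscr{D}_{\textbf{S}-\lambda}$. Fix a holomorphic frame $F_{\textbf{T}}=(F_{1},\dots,F_{n})$ of $E_{\textbf{T}}$ together with an auxiliary holomorphic $G_{\textbf{T}}$, and set $F_{\textbf{S}}:=XF_{\textbf{T}}$ and $G_{\textbf{S}}:=(X^{*})^{-1}G_{\textbf{T}}$. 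Then $F_{\textbf{S}}$ is a holomorphic frame of $E_{\textbf{S}}$, while the identity
\[
G_{\textbf{S}}^{*}(\lambda)F_{\textbf{S}}(\lambda)=G_{\textbf{T}}^{*}(\lambda)X^{-1}XF_{\textbf{T}}(\lambda)=G_{\textbf{T}}^{*}(\lambda)F_{\textbf{T}}(\lambda)
\]
guarantees that the metric matrix is preserved and, in particular, remains invertible. Substituting into the definition of the extended holomorphic curves yields $\mathcal{I}_{\textbf{S}}(\lambda)=X\mathcal{I}_{\textbf{T}}(\lambda)X^{-1}$, which is exactly $\mathcal{I}_{\textbf{T}}\sim_{s}\mathcal{I}_{\textbf{S}}$.

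For the reverse direction, I would assume $X\mathcal{I}_{\textbf{T}}(\lambda)=\mathcal{I}_{\textbf{S}}(\lambda)X$ for all $\lambda\in\Omega$ with $X$ invertible. Because $\mathrm{Ran}\,\mathcal{I}_{\textbf{T}}(\lambda)=\ker\mathscr{D}_{\textbf{T}-\lambda}$ and $\mathrm{Ran}\,\mathcal{I}_{\textbf{S}}(\lambda)=\ker\mathscr{D}_{\textbf{S}-\lambda}$, this intertwining forces $X(\ker\mathscr{D}_{\textbf{T}-\lambda})\subseteq\ker\mathscr{D}_{\textbf{S}-\lambda}$. Thus for any $v\in\ker\mathscr{D}_{\textbf{T}-\lambda}$ and every $i$,
\[
S_{i}Xv=\lambda_{i}Xv=X(\lambda_{i}v)=XT_{i}v.
\]
Invoking the Cowen--Douglas spanning condition $\bigvee_{\lambda\in\Omega}\ker\mathscr{D}_{\textbf{T}-\lambda}=\mathcal{H}_{1}$, the equality $S_{i}X=XT_{i}$ holds on a dense subspace of $\mathcal{H}_{1}$ and therefore, by continuity of $X$ and of each $T_{i},S_{i}$, on all of $\mathcal{H}_{1}$. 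This gives $\textbf{T}\sim_{s}\textbf{S}$.

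The conceptual content of the proof is straightforward, so the main (minor) technical point sits in the forward direction: one has to verify that the auxiliary data $F_{\textbf{S}}=XF_{\textbf{T}}$ and $G_{\textbf{S}}=(X^{*})^{-1}G_{\textbf{T}}$ satisfy the defining conditions of $\mathcal{I}_{n}(\Omega,\mathcal{U})$; holomorphicity is immediate as $X$ is a fixed bounded invertible operator independent of $\lambda$, and the invertibility of $G_{\textbf{S}}^{*}F_{\textbf{S}}$ is inherited from $G_{\textbf{T}}^{*}F_{\textbf{T}}$ via the computation above. The reverse direction is essentially automatic once one remembers that $\mathcal{I}_{\textbf{T}}(\lambda)$ is an idempotent onto the joint kernel, and that the density axiom in the definition of $\mathcal{B}_{n}^{m}(\Omega)$ is precisely what lets pointwise intertwining on eigenvectors extend to a global intertwining of the tuples.
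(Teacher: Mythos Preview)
Your proposal is correct. The reverse direction coincides with the paper's argument, though you make the use of the spanning axiom $\bigvee_{\lambda}\ker\mathscr{D}_{\textbf{T}-\lambda}=\mathcal{H}_{1}$ explicit where the paper simply asserts $X\textbf{T}=\textbf{S}X$ once it has $X\ker\mathscr{D}_{\textbf{T}-\lambda}=\ker\mathscr{D}_{\textbf{S}-\lambda}$.

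The forward direction genuinely differs. The paper fixes (implicitly orthogonal) idempotents $\mathcal{I}_{\textbf{T}},\mathcal{I}_{\textbf{S}}$ and argues directly on vectors: writing $x=x_{1}\oplus x_{2}$ with $x_{2}\in[\ker\mathscr{D}_{\textbf{T}-\lambda}]^{\perp}$, it claims $\mathcal{I}_{\textbf{S}}(\lambda)Xx_{2}=0$ via $\langle Xx_{2},y\rangle=\langle x_{2},X^{*}y\rangle=0$ for $y\in\ker\mathscr{D}_{\textbf{S}-\lambda}$, a step that tacitly needs $X^{*}$ to send $\ker\mathscr{D}_{\textbf{S}-\lambda}$ into $\ker\mathscr{D}_{\textbf{T}-\lambda}$ and is delicate when $X$ is merely invertible rather than unitary. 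Your route avoids this issue entirely: by transporting the frames via $F_{\textbf{S}}=XF_{\textbf{T}}$ and $G_{\textbf{S}}=(X^{*})^{-1}G_{\textbf{T}}$ you obtain $G_{\textbf{S}}^{*}F_{\textbf{S}}=G_{\textbf{T}}^{*}F_{\textbf{T}}$ and hence $\mathcal{I}_{\textbf{S}}=X\mathcal{I}_{\textbf{T}}X^{-1}$ by direct substitution. The trade-off is that you are selecting the representative of $\mathcal{I}_{\textbf{S}}$ (through the choice of auxiliary $G$) rather than proving the intertwining for an arbitrarily prescribed one, but this is entirely consistent with how membership in $\mathcal{I}_{n}(\Omega,\mathcal{U})$ is defined and it makes the argument clean and self-contained.
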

\begin{proof}
If $\textbf{T}\sim_{s}\textbf{S},$ there is an invertible operator $X\in\mathcal{L}(\mathcal{H}_{1},\mathcal{H}_{2})$ such that $X\textbf{T}=\textbf{S}X,$
and then $X\ker\mathscr{D}_{\textbf{T}-\lambda}\subseteq \ker\mathscr{D}_{\textbf{S}-\lambda}.$ Since $\dim\ker\mathscr{D}_{\textbf{T}-\lambda}=\dim\ker\mathscr{D}_{\textbf{S}-\lambda}=n,$ we have that
$X\ker\mathscr{D}_{\textbf{T}-\lambda}=\ker\mathscr{D}_{\textbf{S}-\lambda}$ for any $\lambda\in\Omega.$
Thus, $\langle Xx_{2},y\rangle=\langle x_{2},X^{*}y\rangle=0$ for any $y\in \ker\mathscr{D}_{\textbf{S}-\lambda}$ and $x=x_{1}\oplus x_{2}\in\mathcal{H}_{1},$
where $x_{1}\in \ker\mathscr{D}_{\textbf{T}-\lambda}$ and $x_{2}\in[\ker\mathscr{D}_{\textbf{T}-\lambda}]^{\perp}.$
It follows that
$$X\mathcal{I}_{\textbf{T}}(\lambda)x=Xx_{1}=Xx_{1}+\mathcal{I}_{\textbf{S}}(\lambda)Xx_{2}
=\mathcal{I}_{\textbf{S}}(\lambda)Xx_{1}+\mathcal{I}_{\textbf{S}}(\lambda)Xx_{2}=
\mathcal{I}_{\textbf{S}}(\lambda)Xx,\quad x\in\mathcal{H}_{1}.$$
So $X\mathcal{I}_{\textbf{T}}(\lambda)=\mathcal{I}_{\textbf{S}}(\lambda)X$ for any $\lambda\in\Omega$ and the invertible operator $X\in\mathcal{L}(\mathcal{H}_{1},\mathcal{H}_{2})$.

On the contrary, if there is an invertible operator $X$ such that $X\mathcal{I}_{\textbf{T}}(\lambda)=\mathcal{I}_{\textbf{S}}(\lambda)X$ for any $\lambda\in\Omega$,
From $\mathcal{I}_{\textbf{T}}(\lambda)$ and $\mathcal{I}_{\textbf{S}}(\lambda)$ are idempotent operators on $\mathcal{H}_{1}$ to $\ker\mathscr{D}_{\textbf{T}-\lambda}$ and $\mathcal{H}_{2}$ to $\ker\mathscr{D}_{\textbf{S}-\lambda}$, respectively.
we obtain that $X\ker\mathscr{D}_{\textbf{T}-\lambda}=\ker\mathscr{D}_{\textbf{S}-\lambda}$ for any $\lambda\in\Omega$, and then $X\textbf{T}=\textbf{S}X$.
\end{proof}

Proposition \ref{p2} above shows that the similarity of the Cowen-Douglas class is equivalent to the similarity of its corresponding extended holomorphic curve, and  now we give an example to understand it intuitively.

\begin{example}
Let $\textbf{T}=(T_{1},...,T_{m}),$ $\textbf{S}=(S_{1},...,S_{m})\in\mathcal{B}_{1}^{m}(\mathbb{B}_{m})$ be commuting tuples of unilateral weighted shifts on reproducing kernel Hilbert spaces $\mathcal{H}_{K}$ and $\mathcal{H}_{\widehat{K}}$ with reproducing kernel $K$ and $\widehat{K},$ respectively. If
$$K(\omega,\lambda)=\sum\limits_{I\in\mathbb{Z}_{+}^{m}}a_{I}^{2}\omega^{I}
\overline{\lambda}^{I}\quad\textit{and}\quad
\widehat{K}(\omega,\lambda)=\sum\limits_{I\in\mathbb{Z}_{+}^{m}}b_{I}^{2}
\omega^{I}\overline{\lambda}^{I},\quad \omega,\lambda\in\mathbb{B}_{m},$$
where $a_{I}, b_{I}$ are real numbers. Then  $\mathcal{I}_{\textbf{T}}\sim_{s}\mathcal{I}_{\textbf{S}}$ if and only if $m\leq\frac{b_{I}}{a_{I}}\leq M$ for any $I\in\mathbb{Z}_{+}^{m}$, where the constants $0< m,M<\infty.$
\end{example}
\begin{proof}
From Proposition \ref{p2}, we only need to prove that $\textbf{T}\sim_{s}\textbf{S}$ if and only if $m\leq\frac{b_{I}}{a_{I}}\leq M$ for any $I\in\mathbb{Z}_{+}^{m}$ and the constants $0< m,M<\infty.$
Letting
$$\alpha(\lambda):=K(\cdot,\overline{\lambda}),\quad
\beta(\lambda):=\widehat{K}(\cdot,\overline{\lambda}),\quad \text{and} \quad\gamma(\lambda):=H(\cdot,\overline{\lambda}),$$ where $H(\omega,\lambda)=\sum\limits_{I\in\mathbb{Z}_{+}^{m}}\frac{b_{I}^{4}}
{a_{I}^{2}}\omega^{I}\overline{\lambda}^{I}.$
So we set
$$\mathcal{I}_{\textbf{T}}(\lambda)=\alpha(\lambda)
\left(\gamma^{*}(\lambda)\alpha(\lambda)\right)^{-1}\gamma^{*}(\lambda)
\quad \text{and}
\quad \mathcal{I}_{\textbf{S}}(\lambda)=\beta(\lambda)
\left(\beta^{*}(\lambda)\beta(\lambda)\right)^{-1}\beta^{*}(\lambda),\quad\lambda\in\mathbb{B}_{m},$$
where $\gamma^{*}(\lambda)\alpha(\lambda)=\beta^{*}(\lambda)\beta(\lambda)$ is a polynomial.
If $\textbf{T}\sim_{s}\textbf{S}$, by Lemma \ref{lm3} and Proposition \ref{p2},
there is an invertible operator $X$ such that
$$XD^{I}\mathcal{I}_{\textbf{T}}(\lambda)\overline{D}^{J}\mathcal{I}_{\textbf{T}}(\lambda)=
D^{I}\mathcal{I}_{\textbf{S}}(\lambda)\overline{D}^{J}\mathcal{I}_{\textbf{S}}(\lambda)X,\quad \lambda\in\Omega, \, I,J\in\mathbb{Z}_{+}^{m}.$$
We obtain that $XD^{I}\alpha(\lambda)\overline{D}^{J}\gamma^{*}(\lambda)=
D^{I}\beta(\lambda)\overline{D}^{J}\beta^{*}(\lambda)X$ from Lemma \ref{le00}.
If we take $\lambda=0,$ then
$$D^{I}\alpha(0)\overline{D}^{J}\gamma^{*}(0)
=\frac{a_{I}b_{J}^{2}}{a_{J}}I!J!e_{I,J}\quad\textup{and}\quad
D^{I}\beta(0)\overline{D}^{J}\beta^{*}(0)=b_{I}b_{J}I!J!e_{I,J}.$$
where $e_{I,J}$ denote the infinite matrix which satisfies that $(I,J)$th entry equals to 1 and other entries are all 0.
Setting $((x_{I,J}))$ to be the matrix form of $X,$ without loss of generality, suppose that $x_{0,0}\neq 0,$ then
$((x_{I,J}))\frac{a_{I}b_{J}^{2}}{a_{J}}e_{I,J}=b_{I}b_{J}e_{I,J}((x_{I,J})).$
Take $J=I+e_{i}, 1\leq i\leq m,$ we have that $x_{I,I}=\frac{b_{I}a_{I+e_{i}}}{a_{I}b_{I+e_{i}}}x_{I+e_{i},I+e_{i}},$
and then
$$\begin{cases}x_{0,0}=\frac{b_{0}a_{e_{i}}}{a_{0}b_{e_{i}}}x_{e_{i},e_{i}} \\
x_{e_{i},e_{i}}=\frac{b_{e_{i}}a_{2e_{i}}}{a_{e_{i}}b_{2e_{i}}}x_{2e_{i},2e_{i}}\\
\cdots\\
x_{(n-1)e_{i},(n-1)e_{i}}=\frac{b_{(n-1)e_{i}}a_{ne_{i}}}{a_{(n-1)e_{i}}b_{ne_{i}}}
x_{ne_{i},ne_{i}}\\
\cdots\\
\end{cases}.$$
It follows that $x_{0,0}=\frac{b(0)a(ne_{i})}{a(0)b(ne_{i})}x_{ne_{i},ne_{i}}$ for any positive integer $n$.
 From the relationship between commuting tuple of unilateral weighted shifts
and reproducing kernel of Hilbert space, we have that
$$T_{i}\textbf{e}_{I}=T_{i}a_{I}z^{I}=a_{I}T_{i}(z^{I})=a_{I}z^{I+e_{i}}
=\frac{a_{I}}{a_{I+e_{i}}}a_{I+e_{i}}z^{I+e_{i}}
=\frac{a_{I}}{a_{I+e_{i}}}\textbf{e}_{I+e_{i}},$$
where $\left\{\textbf{e}_{I}=a_{I}z^{I}\right\}_{I\in\mathbb{Z}_{+}^{m}}$ is an orthogonal basis of the Hilbert space $\mathcal{H}_{K}$.
Thus, for any $I\in\mathbb{Z}_{+}^{m},$
\begin{equation}\label{eq11}
\frac{\prod\limits_{k=0}^{n-1}\frac{a_{I+ke_{i}}}{a_{I+(k+1)e_{i}}}}
{\prod\limits_{k=0}^{n-1}\frac{b_{I+ke_{i}}}{b_{I+(k+1)e_{i}}}}=
\frac{a_{I}b_{I+ne_{i}}}{b_{I}a_{I+ne_{i}}},\quad n\geq 1.
\end{equation}
By the Theorem 3.1 \cite{Ex3.14} and (\ref{eq11}), we know that $\textbf{T}\sim_{s}\textbf{S}$ if and only if $m\leq\frac{b_{I}}{a_{I}}\leq M$ for any $I\in\mathbb{Z}_{+}^{m}$ and the constants $0< m,M<\infty.$
\end{proof}

\section{\sf The flag structure of the class $\mathcal{FB}_{n}(\Omega)$}\label{section3}
In this section, $\Omega\subset\mathbb{C}$ be a domain, then the class $\mathcal{B}_{n}^{1}(\Omega)$, i.e., $\mathcal{B}_{n}(\Omega)$, is the Cowen-Douglas operator. M. J. Cowen and R. G. Douglas \cite{CD, CD3} show that curvature and its covariant derivatives are complete unitary invariants of the Cowen-Douglas class. But these curvatures and their covariant derivatives are generally very difficult to calculate except for the Hermitian holomorphic vector bundle of rank 1.
K. Ji, C. L. Jiang, D. K. Keshari, and G. Misra \cite{JJKM1, FBnJJKM} isolate $\mathcal{FB}_{n}(\Omega)$, a subclass of operators with flag structure in the class $\mathcal{B}_{n}(\Omega)$, and give that the curvature and the second fundamental forms are the complete set of unitary invariants of this subclass.
C. L. Jiang, K. Ji, and D. K. Keshari \cite{JJK2023} defined a subclass of $\mathcal{FB}_{n}(\Omega)$, $\mathcal{CFB}_{n}(\Omega)$, and proved that  $\mathcal{CFB}_{n}(\Omega)$ is norm dense in $\mathcal{B}_{n}(\Omega)$ and that
the curvature and the second fundamental form completely
characterize the similarity invariants for the class $\mathcal{CFB}_{n}(\Omega)$.

\begin{definition}\cite{FBnJJKM}
Let $\mathcal{FB}_{n}(\Omega)$ be the set of all bounded linear operators $T$ defined on some complex separable Hilbert space $\mathcal{H}=\mathcal{H}_{0}\oplus\cdots\oplus\mathcal{H}_{n-1},$ which are of the form
\begin{equation*}
T=
\begin{pmatrix}
T_{0}  &  S_{01} & S_{02} & \cdots & S_{0,n-1} \\
0      &  T_{1}  & S_{12} & \cdots & S_{1,n-1} \\
  0    &    0    & T_{2}  & \cdots & S_{2,n-1} \\
 \vdots&   \vdots& \vdots & \ddots & \vdots    \\
    0  &   0     &    0   & \cdots & T_{n-1}
\end{pmatrix},
\end{equation*}
where the operator $T_{i}\in\mathcal{B}_{1}(\Omega)\cap\mathcal{L}(\mathcal{H}_{i})$, $0\leq i\leq n-1,$ and $S_{i,i+1}:\mathcal{H}_{i}\rightarrow\mathcal{H}_{i+1},$ is assume to be a non-zero intertwining operator, namely, $T_{i}S_{i,i+1}=S_{i,i+1}T_{i+1},\,0\leq i\leq n-2.$
\end{definition}
All quasi-homogeneous operators are in the class $\mathcal{FB}_{n}(\Omega),$ (see Theorem 3.3 \cite{JJM2017}).  S. S. Ji and the second author  \cite{JX2022} show that the trace of the curvature of homogeneous operators in $\mathcal{FB}_{n}(\Omega)$ is equal to the sum of the curvature of the diagonal operators of the operator.
If the operators in $\mathcal{FB}_{n}(\Omega)$ are unitary equivalent, then the unitary operator has the following structure.

\begin{lemma}\cite{FBnJJKM}\label{le001}
Any two operators $T$ and $\widetilde{T}$ in $\mathcal{FB}_{n}(\Omega)$ are unitarily equivalent if and only if there exists unitary operator $U=\text{diag}\,\{U_{0},U_{1},\ldots,U_{n-1}\}$ such that $UT=\widetilde{T}U.$
\end{lemma}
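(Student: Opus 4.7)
The $(\Leftarrow)$ direction is immediate. For the nontrivial direction, assume $T\sim_u\widetilde{T}$ via some unitary $U:\mathcal{H}\to\widetilde{\mathcal{H}}$; the plan is to show that any such $U$ is automatically block diagonal with respect to the flag decompositions $\mathcal{H}=\bigoplus_i\mathcal{H}_i$ and $\widetilde{\mathcal{H}}=\bigoplus_i\widetilde{\mathcal{H}}_i$. Writing $U=(U_{ij})$ with $U_{ij}\colon\mathcal{H}_j\to\widetilde{\mathcal{H}}_i$ and using that $T,\widetilde{T}$ are block upper triangular with diagonals $T_i,\widetilde{T}_i\in\mathcal{B}_1(\Omega)$, the identity $UT=\widetilde{T}U$ becomes the family of entry-wise equations
\[
U_{ij}T_j+\sum_{k<j}U_{ik}S_{kj}\;=\;\widetilde{T}_iU_{ij}+\sum_{k>i}\widetilde{S}_{ik}U_{kj},\qquad 0\le i,j\le n-1.
\]

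I would then show, by induction on the anti-diagonal index $i-j$ starting from the corner $(i,j)=(n-1,0)$, that every sub-diagonal block $U_{ij}$ (with $i>j$) vanishes. The base case collapses to $U_{n-1,0}T_0=\widetilde{T}_{n-1}U_{n-1,0}$, so $U_{n-1,0}$ is a bounded intertwiner between two rank-one Cowen--Douglas operators. Evaluating on a holomorphic frame $\gamma_0(\lambda)$ of $T_0$ places $U_{n-1,0}\gamma_0(\lambda)$ in the line $\ker(\widetilde{T}_{n-1}-\lambda)=\mathbb{C}\widetilde{\gamma}_{n-1}(\lambda)$, giving $U_{n-1,0}\gamma_0(\lambda)=\phi(\lambda)\widetilde{\gamma}_{n-1}(\lambda)$ for some holomorphic scalar $\phi$; the task is then to prove $\phi\equiv 0$. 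At the inductive step, after the previously established vanishings are inserted, the entry-wise identity again reduces to a pure intertwining $U_{ij}T_j=\widetilde{T}_iU_{ij}$, and the same template applies.

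The main obstacle is the vanishing of the scalar $\phi$ (and its inductive analogues), since a bounded intertwiner between two $\mathcal{B}_1(\Omega)$-operators need not be zero. The extra ingredients I would exploit are two. First, the unitarity of $U$, imposed via the companion identity $U^*\widetilde{T}=TU^*$, which yields a second holomorphic scalar attached to the symmetric block of $U^*$. Second, the non-zero intertwiners $S_{i,i+1},\widetilde{S}_{i,i+1}$ required by the definition of $\mathcal{FB}_n(\Omega)$, which rigidly couple the blocks along the flag. Feeding the orthogonality relations $U^*U=I_{\mathcal{H}}$ and $UU^*=I_{\widetilde{\mathcal{H}}}$ into the decomposition of $U\gamma_0(\lambda)$ across the full flag of $\widetilde{T}$ (whose bottom component is $\phi(\lambda)\widetilde{\gamma}_{n-1}(\lambda)$), and exploiting rigidity of holomorphic sections of the rank-one Cowen--Douglas bundles together with analyticity in $\lambda$, should force $\phi\equiv 0$ and, by induction, each sub-diagonal scalar.

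Once all sub-diagonal blocks vanish, $U$ is block upper triangular; applying the identical analysis to $U^*$ (which is also unitary and satisfies $U^*\widetilde{T}=TU^*$) shows that $U^*$ is block upper triangular as well, so $U$ is block diagonal. The diagonal entries $U_{ii}$ are then unitaries $\mathcal{H}_i\to\widetilde{\mathcal{H}}_i$ by $U^*U=I$ and $UU^*=I$, and the diagonal of the original intertwining identity yields $U_{ii}T_i=\widetilde{T}_iU_{ii}$, completing the proof.
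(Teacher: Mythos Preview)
The paper does not prove this lemma at all; it is stated with a citation to \cite{FBnJJKM} and used as a black box. So there is no ``paper's proof'' to compare against---your proposal is being measured against the argument in the cited reference.

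Your skeleton is reasonable, but the step you yourself flag as ``the main obstacle'' is a genuine gap, and the tools you list do not close it. Concretely: at the base of your induction you obtain an intertwiner $U_{n-1,0}$ with $U_{n-1,0}T_0=\widetilde T_{n-1}U_{n-1,0}$, hence $U_{n-1,0}t_0(\lambda)=\phi(\lambda)\widetilde t_{n-1}(\lambda)$, and you need $\phi\equiv 0$. Your first proposed ingredient, the companion relation $U^*\widetilde T=TU^*$, yields at the $(n-1,0)$ corner an intertwining condition on $U_{0,n-1}^*$, i.e.\ information about the \emph{upper}-right block of $U$, not the lower-left one; it does not constrain $\phi$. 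Your second ingredient, the non-zero $S_{i,i+1}$, enters via equations like $(\widetilde T_{n-1}-\lambda)Dt_1=-\phi\,\widetilde t_{n-1}$ (in the $n=2$ case), but for $\mathcal B_1(\Omega)$-operators the eigenvector $\widetilde t_{n-1}(\lambda)$ typically \emph{does} lie in $\operatorname{ran}(\widetilde T_{n-1}-\lambda)$ (think of the backward shift), so this does not force $\phi=0$ either. Invoking ``rigidity of holomorphic sections'' and ``analyticity'' is too vague to constitute an argument here.

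The proof in \cite{FBnJJKM} proceeds differently: it works at the level of the holomorphic frame $\{\gamma_k\}$ built from the $t_i$ (as in the formulas used later in this paper), writes the induced change-of-frame matrix $\Phi(\lambda)$ for $U$ between $\{\gamma_k\}$ and $\{\widetilde\gamma_k\}$, and exploits the very particular ``jet-plus-diagonal'' structure of the Gram matrices $h,\widetilde h$ (cf.\ the computation in Lemma~\ref{lm22}) together with the isometry relation $\Phi^*\widetilde h\,\Phi=h$ to force $\Phi$ to be upper triangular; the block-diagonality of $U$ then follows from the correspondence between the frame components and the summands $\mathcal H_i$. If you want to push your operator-block approach through, you would need to import this metric/frame rigidity rather than rely on generic intertwiner arguments.
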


\begin{lemma}\label{lm4.4}\cite{CD}
Operators $T$ and $\widetilde{T}$ in $\mathcal{B}_{1}(\Omega)$ are unitarity equivalent if and only if $\mathcal{K}_{T}(\lambda)=\mathcal{K}_{\widetilde{T}}(\lambda)$ for any $\lambda\in\Omega.$
\end{lemma}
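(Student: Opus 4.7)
The plan is to prove both directions via the Hermitian holomorphic line bundle $E_{T}$ and a polarization argument on the Gram function of a holomorphic section.

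For the easy direction, suppose $U T = \widetilde{T} U$ for some unitary $U$. Then $U$ carries $\ker(T-\lambda)$ isomorphically onto $\ker(\widetilde{T}-\lambda)$, so if $\gamma$ is a local nonvanishing holomorphic section of $E_{T}$ on an open $\Omega_{0}\subset\Omega$, then $\widetilde{\gamma}(\lambda):=U\gamma(\lambda)$ is a local nonvanishing holomorphic section of $E_{\widetilde{T}}$. Since $U$ is unitary, $\|\widetilde{\gamma}(\lambda)\|^{2}=\|\gamma(\lambda)\|^{2}$, and hence the curvatures
$\mathcal{K}_{T}(\lambda)=-\partial\overline{\partial}\log\|\gamma(\lambda)\|^{2}$ and $\mathcal{K}_{\widetilde{T}}(\lambda)=-\partial\overline{\partial}\log\|\widetilde{\gamma}(\lambda)\|^{2}$ coincide on $\Omega_{0}$, and thus on $\Omega$ by connectedness.

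For the converse, fix nonvanishing holomorphic sections $\gamma,\widetilde{\gamma}$ of $E_{T}$ and $E_{\widetilde{T}}$ on a simply connected open set $\Omega_{0}\subset\Omega$. The hypothesis $\mathcal{K}_{T}=\mathcal{K}_{\widetilde{T}}$ means that $\log(\|\widetilde{\gamma}\|^{2}/\|\gamma\|^{2})$ is pluriharmonic on $\Omega_{0}$, so there exists a nonvanishing holomorphic function $h$ on $\Omega_{0}$ with $\|\widetilde{\gamma}(\lambda)\|^{2}=|h(\lambda)|^{2}\|\gamma(\lambda)\|^{2}$. Replacing $\widetilde{\gamma}$ by $\widetilde{\gamma}/h$ (still a holomorphic section of $E_{\widetilde{T}}$), we may assume
\[
\|\gamma(\lambda)\|^{2}=\|\widetilde{\gamma}(\lambda)\|^{2},\qquad \lambda\in\Omega_{0}.
\]
The key observation is now polarization: the function $(\lambda,\mu)\mapsto\langle\gamma(\lambda),\gamma(\mu)\rangle$ is holomorphic in $\lambda$ and antiholomorphic in $\mu$, and its diagonal restriction is $\|\gamma(\lambda)\|^{2}$; the same is true for $\widetilde{\gamma}$. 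By the uniqueness of the real-analytic continuation of a real-analytic function off the diagonal, we conclude that
\[
\langle\gamma(\lambda),\gamma(\mu)\rangle=\langle\widetilde{\gamma}(\lambda),\widetilde{\gamma}(\mu)\rangle,\qquad \lambda,\mu\in\Omega_{0}.
\]

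To build the unitary operator $U$, I would define it on the linear span of $\{\gamma(\lambda):\lambda\in\Omega_{0}\}$ by $U\gamma(\lambda):=\widetilde{\gamma}(\lambda)$ and extend linearly. The inner product identity above shows that $U$ preserves inner products of all finite linear combinations, hence extends to an isometry between the closures. By condition (2) of the Cowen-Douglas definition, $\bigvee_{\lambda\in\Omega_{0}}\ker(T-\lambda)=\mathcal{H}$ and similarly for $\widetilde{T}$ (a standard consequence of analyticity reduces the global spanning condition to a spanning on any nonempty open subset), so $U$ extends to a unitary from $\mathcal{H}$ onto $\widetilde{\mathcal{H}}$. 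Since $T\gamma(\lambda)=\lambda\gamma(\lambda)$ and $\widetilde{T}\widetilde{\gamma}(\lambda)=\lambda\widetilde{\gamma}(\lambda)$, we get $UT\gamma(\lambda)=\widetilde{T}U\gamma(\lambda)$ on the dense spanning set, so $UT=\widetilde{T}U$.

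The main obstacle is the passage from the pointwise metric equality to the off-diagonal inner product equality: this is where polarization (equivalently, the identity principle for real-analytic functions of $(\lambda,\overline{\mu})$) is essential, and it is also where the initial normalization via the holomorphic factor $h$ matters, since without it one only obtains equality of inner products up to a nowhere vanishing holomorphic cocycle. A secondary technical point is the global extension from a simply connected $\Omega_{0}$ to all of $\Omega$, which follows once the unitary $U$ has been constructed on all of $\mathcal{H}$ and the intertwining relation is verified on a dense set.
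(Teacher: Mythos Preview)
The paper does not supply a proof of this lemma; it is quoted as a known result from \cite{CD} and used without argument. Your proposal is a correct reconstruction of the classical Cowen--Douglas proof: normalize the sections so their norms agree via a holomorphic factor obtained from harmonicity of $\log(\|\widetilde\gamma\|^{2}/\|\gamma\|^{2})$, pass from the diagonal identity $\|\gamma(\lambda)\|^{2}=\|\widetilde\gamma(\lambda)\|^{2}$ to the off-diagonal identity $\langle\gamma(\lambda),\gamma(\mu)\rangle=\langle\widetilde\gamma(\lambda),\widetilde\gamma(\mu)\rangle$ by polarization of sesqui-holomorphic functions, and then build the intertwining unitary on the dense span of eigenvectors. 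The two technical points you flag (the polarization step and the reduction of the spanning hypothesis from $\Omega$ to a subdomain $\Omega_{0}$ via analyticity of $\lambda\mapsto\langle\gamma(\lambda),v\rangle$) are exactly the places where care is needed, and your treatment of both is sound.
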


\begin{definition}\cite{Jet}
Let $E$ be an Hermitian holomorphic line bundle over $\Omega$ and $\gamma$ be a holomorphic cross-sections of $E$. Suppose that $\gamma(\lambda),\gamma^{'}(\lambda),\gamma^{(2)}(\lambda),\cdots,\gamma^{(k)}(\lambda)$ are independent for each $\lambda$ in $\Omega$.
We say that $\mathcal{J}_{k}(E)$ is a $k$-jet bundle of rank $k+1$ obtained from $E$ and $\mathcal{J}_{k}(E)$ has an associated holomorphic frame $\mathcal{J}_{k}(\gamma)=\{\gamma,\gamma^{'},\gamma^{(2)},\cdots,\gamma^{(k)}\}$.
\end{definition}




Let  $T\in\mathcal{B}_{n}(\Omega)\cap\mathcal{L}(\mathcal{H}),$ and  ${\gamma_{0}, \gamma_{1}, \ldots, \gamma_{n-1}}$ be a holomorphic
frame for the Hermitian holomorphic vector bundle $E_{T}$. Define $\alpha=(\gamma_{0},\gamma_{1},\ldots,\gamma_{n-1})$ and $P_{T}(\lambda)=\alpha(\lambda)h^{-1}(\lambda)\alpha^{*}(\lambda),$ where $h(\lambda)=\alpha^{*}(\lambda)\alpha(\lambda)$ is the metric of $E_{T},$
we have that $P_{T}:\Omega\rightarrow\mathcal{P}(\mathcal{L}(\mathcal{H}))$ is an holomorphic curve and $P_{T}(\lambda)$ is the matrix form of the projection operator from $\mathcal{H}$ to $\ker(T-\lambda)$.

\begin{lemma}\label{prop22}
Let $T\in\mathcal{B}_{n}(\Omega)\cap\mathcal{L}(\mathcal{H}_{1}), S\in\mathcal{B}_{n}(\Omega)\cap\mathcal{L}(\mathcal{H}_{2}).$ Then
$T\sim_{u}S$
if and only if
$$UP_{T}(\lambda)=P_{S}(\lambda)U,\quad \lambda\in\Omega$$ for some unitary operator $U\in\mathcal{L}(\mathcal{H}_{1},\mathcal{H}_{2}).$
\end{lemma}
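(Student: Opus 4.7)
\medskip

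The plan is to proceed along the lines of the proof of Proposition \ref{p2}, replacing similarity by unitary equivalence and invoking the fact that $P_{T}(\lambda)$ and $P_{S}(\lambda)$ are \emph{orthogonal} projections (this is visible from the formula $P_{T}=\alpha h^{-1}\alpha^{*}$ with $h=\alpha^{*}\alpha$, which gives $P_{T}^{*}=P_{T}=P_{T}^{2}$).

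For the necessity, assume $UT=SU$ for some unitary $U\in\mathcal{L}(\mathcal{H}_{1},\mathcal{H}_{2})$. Applying the intertwining to $T-\lambda$ yields $U\ker(T-\lambda)\subseteq\ker(S-\lambda)$, and applying it to $U^{*}$ (which is again unitary and intertwines $S$ with $T$) gives the reverse inclusion; alternatively, one invokes $\dim\ker(T-\lambda)=\dim\ker(S-\lambda)=n$ as in Proposition \ref{p2}. Since $U$ preserves inner products, it also sends $\ker(T-\lambda)^{\perp}$ onto $\ker(S-\lambda)^{\perp}$. Decomposing an arbitrary $x\in\mathcal{H}_{1}$ as $x=x_{1}+x_{2}$ with $x_{1}\in\ker(T-\lambda)$ and $x_{2}\in\ker(T-\lambda)^{\perp}$, we get $P_{T}(\lambda)x=x_{1}$, and
\[
UP_{T}(\lambda)x=Ux_{1}=P_{S}(\lambda)Ux_{1}+P_{S}(\lambda)Ux_{2}=P_{S}(\lambda)Ux,
\]
because $Ux_{1}\in\ker(S-\lambda)$ and $Ux_{2}\perp\ker(S-\lambda)$. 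This establishes $UP_{T}(\lambda)=P_{S}(\lambda)U$ for every $\lambda\in\Omega$.

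For the sufficiency, assume $UP_{T}(\lambda)=P_{S}(\lambda)U$ for all $\lambda\in\Omega$ with $U$ unitary. Taking ranges, and using that $P_{T}(\lambda)$ (respectively $P_{S}(\lambda)$) is the orthogonal projection onto $\ker(T-\lambda)$ (respectively $\ker(S-\lambda)$), we conclude $U\ker(T-\lambda)\subseteq\ker(S-\lambda)$; the same identity applied to $U^{*}$ (using that $P_{T},P_{S}$ are self-adjoint so $U^{*}P_{S}(\lambda)=P_{T}(\lambda)U^{*}$) yields equality. Thus, for every $x\in\ker(T-\lambda)$ we have $SUx=\lambda Ux=UTx$, i.e.\ $(SU-UT)|_{\ker(T-\lambda)}=0$. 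By the defining property of the Cowen--Douglas class, $\bigvee_{\lambda\in\Omega}\ker(T-\lambda)=\mathcal{H}_{1}$, so $SU=UT$ on a dense subspace and, by boundedness, on all of $\mathcal{H}_{1}$; that is, $T\sim_{u}S$.

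There is no real obstacle in this argument; the only point requiring care is the passage between the range of the projection and the kernel, which uses that $P_{T}(\lambda)$ and $P_{S}(\lambda)$ are \emph{orthogonal} projections (so that $U$-invariance of the projection is equivalent to $U$-invariance of the kernel together with that of its orthogonal complement). This is precisely the feature that distinguishes the present lemma from the idempotent-valued Proposition \ref{p2} and forces $U$ to be unitary rather than merely invertible.
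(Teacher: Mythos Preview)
Your proof is correct and follows essentially the same approach as the paper: both directions proceed by showing that $U$ carries $\ker(T-\lambda)$ onto $\ker(S-\lambda)$ and its orthogonal complement onto $\ker(S-\lambda)^{\perp}$, then conclude the intertwining relation. Your sufficiency argument is in fact slightly more explicit than the paper's, which simply asserts ``$UT=SU$'' after establishing $U\ker(T-\lambda)=\ker(S-\lambda)$; you spell out the use of the density condition $\bigvee_{\lambda\in\Omega}\ker(T-\lambda)=\mathcal{H}_{1}$, which is the right justification.
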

\begin{proof}
If $T\sim_{u}S,$
we obtain that $U\ker(T-\lambda)=\ker(S-\lambda)$ from $\dim\ker(T-\lambda)=\dim\ker(S-\lambda)=n$ for any $\lambda\in\Omega$ and some unitry operator $U\in\mathcal{L}(\mathcal{H}_{1},\mathcal{H}_{2})$.
For any $x=x_{1}\oplus x_{2}\in\mathcal{H}_{1},$ where $x_{1}\in\ker(T-\lambda)$ and $x_{2}\in[\ker(T-\lambda)]^{\perp},$ we have that
$UP_{T}(\lambda)x=Ux_{1}.$
And then for any $y\in\ker(S-\lambda),$ there is a $\widehat{x}\in\ker(T-\lambda)$ such that $y=U\widehat{x}.$
Thus, $\langle y,Ux_{2}\rangle=\langle U\widehat{x},Ux_{2}\rangle=\langle U^{*}U\widehat{x},x_{2}\rangle=\langle\widehat{x},x_{2}\rangle=0.$
It follows that  $Ux_{2}\in[\ker(S-\lambda)]^{\perp}.$ So
$P_{S}(\lambda)Ux=P_{S}(\lambda)\left(Ux_{1}\oplus
Ux_{2}\right)=Ux_{1}\oplus 0=UP_{T}(\lambda)x$ for any $x\in\mathcal{H}_{1}$,
and then $UP_{T}(\lambda)=P_{S}(\lambda)U.$

On the contrary, if there is an unitary operator $U$ such that $UP_{T}(\lambda)=P_{S}(\lambda)U$ for any $\lambda\in\Omega.$ This means that $U\ker(T-\lambda)\subseteq\ker(S-\lambda).$
Since $\dim\ker(T-\lambda)=\dim\ker(S-\lambda)=n$ and $U$ is an unitary operator, we know that
$U\ker(T-\lambda)=\ker(S-\lambda)$
and $UT=SU.$
\end{proof}


\begin{lemma}\label{lm22}
Let
\begin{math}
T=
\left(
\begin{smallmatrix}
T_{0}  &  S \\
0  &  T_{1} \\
\end{smallmatrix}
\right)
\in\mathcal{FB}_{2}(\Omega)\cap\mathcal{L}(\mathcal{H}).
\end{math}
Then $P_{T}(\lambda): \mathcal{H} \rightarrow \ker(T-\lambda)$ may be expressed in the following form:
\begin{equation*}
P_{T}(\lambda)=(I-\theta(\lambda))
\begin{pmatrix}
P_{T_{0}}(\lambda)  &  0 \\
0  &  P_{T_{1}}(\lambda) \\
\end{pmatrix}
+\theta(\lambda)
\begin{pmatrix}
F_{P_{T}}(\lambda)  &  S_{P_{T}}(\lambda) \\
S_{P_{T}}^{*}(\lambda)  &  0 \\
\end{pmatrix}
,
\end{equation*}
where
\begin{math}
\left(
\begin{smallmatrix}
F_{P_{T}}(\lambda)  &  S_{P_{T}}(\lambda) \\
S_{P_{T}}^{*}(\lambda)  &  0 \\
\end{smallmatrix}
\right)
\end{math}
: $\mathcal{J}_{\{t_{0}\}}^{(1)} \rightarrow \ker(T-\lambda)$ is isomorphic and satisfies
\begin{equation*}
\begin{pmatrix}
F_{P_{T}}(\lambda)  &  S_{P_{T}}(\lambda) \\
S_{P_{T}}^{*}(\lambda)  &  0 \\
\end{pmatrix}
\begin{pmatrix}
t_{0}(\lambda)\\
0 \\
\end{pmatrix}
=
\begin{pmatrix}
t_{0}(\lambda)\\
0 \\
\end{pmatrix}
,\quad
\begin{pmatrix}
F_{P_{T}}(\lambda)  &  S_{P_{T}}(\lambda) \\
S_{P_{T}}^{*}(\lambda)  &  0 \\
\end{pmatrix}
\begin{pmatrix}
t'_{0}(\lambda)\\
0 \\
\end{pmatrix}
=
\begin{pmatrix}
t'_{0}(\lambda)\\
t_{1}(\lambda)\\
\end{pmatrix},
\end{equation*}
$t_{1}$ is a non-zero holomorphic cross-section of $E_{T_{1}}$, $t_{0}=-St_{1},$ and $\theta\Vert t_{1}\Vert^{2}+(1+\theta)\mathcal{K}_{T_{0}}\Vert t_{0}\Vert^{2}=0.$
\end{lemma}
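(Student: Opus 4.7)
The plan is to exhibit an explicit holomorphic frame for the bundle $E_T = \ker(T-\cdot)$, use it to compute $P_T(\lambda) = \alpha(\lambda) h^{-1}(\lambda) \alpha^*(\lambda)$ as a $2\times 2$ block operator on $\mathcal{H}_0 \oplus \mathcal{H}_1$, and then reorganize this block expression so that it matches the decomposition claimed in the lemma.

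First I would fix a non-zero holomorphic section $t_1$ of $E_{T_1}$ and set $t_0 := -S t_1$. The intertwining relation $T_0 S = S T_1$ immediately gives $(T_0 - \lambda) t_0 = 0$, so $t_0$ is a holomorphic section of $E_{T_0}$; differentiating that identity in $\lambda$ yields $(T_0 - \lambda) t'_0 = t_0$. Consequently both
$$\gamma_1 = \begin{pmatrix} t_0 \\ 0 \end{pmatrix}, \qquad \gamma_2 = \begin{pmatrix} t'_0 \\ t_1 \end{pmatrix}$$
lie in $\ker(T-\lambda)$ and, being linearly independent, form a holomorphic frame of $E_T$. The formula $P_T = \alpha h^{-1}\alpha^*$ with $\alpha=(\gamma_1,\gamma_2)$ then delivers a completely explicit block expression.

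Second, I would compute the Gram matrix $h = \alpha^*\alpha$ and observe that its determinant telescopes into the clean form
$$\det h = \|t_0\|^2 \|t_1\|^2 + \bigl(\|t_0\|^2\|t'_0\|^2 - |\langle t'_0,t_0\rangle|^2\bigr) = \|t_0\|^2\bigl(\|t_1\|^2 - \mathcal{K}_{T_0}\|t_0\|^2\bigr),$$
using the standard identity $\mathcal{K}_{T_0} = -(\|t'_0\|^2\|t_0\|^2 - |\langle t'_0,t_0\rangle|^2)/\|t_0\|^4$ for the curvature of the line bundle $E_{T_0}$. This single simplification is what ultimately produces the scalar $\theta$ in the decomposition.

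Third, after expanding $\alpha h^{-1}\alpha^*$ block by block I would separate the terms into a part proportional to $\mathrm{diag}(P_{T_0},P_{T_1})$ and a remainder whose $(2,2)$-block vanishes identically. Setting $\theta := -\mathcal{K}_{T_0}\|t_0\|^2 / (\|t_1\|^2 + \mathcal{K}_{T_0}\|t_0\|^2)$, equivalently the unique scalar satisfying $\theta\|t_1\|^2 + (1+\theta)\mathcal{K}_{T_0}\|t_0\|^2 = 0$, the diagonal part appears with coefficient $(1-\theta)$, and the remaining piece takes the form $\theta\begin{pmatrix}F_{P_T} & S_{P_T}\\ S_{P_T}^* & 0\end{pmatrix}$. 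I would then verify that this remainder, applied to $(t_0,0)$ and $(t'_0,0)$, produces $(t_0,0)$ and $(t'_0,t_1)$ respectively, so that it maps the $2$-dimensional subspace $\mathcal{J}^{(1)}_{\{t_0\}}$ isomorphically onto $\ker(T-\lambda)$.

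The main obstacle is the block-matrix bookkeeping in step three: one must carefully balance the four entries of $h^{-1}$ against $\mathrm{diag}(P_{T_0},P_{T_1})$ so that the vanishing of the $(2,2)$-entry of the remainder emerges naturally, and the coefficient of $P_{T_1}$ matches exactly $(1-\theta)$. Once the curvature identity rewrites $\det h$ as $\|t_0\|^2(\|t_1\|^2 - \mathcal{K}_{T_0}\|t_0\|^2)$, the value of $\theta$ is forced by matching these coefficients, and the specific form of $F_{P_T}$ and $S_{P_T}$ as rank-one-type expressions in $t_0, t'_0, t_1$ follows from direct inspection of the remaining off-diagonal blocks.
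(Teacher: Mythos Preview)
Your proposal is correct and follows essentially the same route as the paper: the paper also takes the frame $\gamma_0=(t_0,0)^T$, $\gamma_1=(t'_0,t_1)^T$, writes the Gram matrix as $\left(\begin{smallmatrix} h_0 & \partial_\lambda h_0\\ \partial_{\bar\lambda}h_0 & \partial_\lambda\partial_{\bar\lambda}h_0+h_1\end{smallmatrix}\right)$, expands $\alpha h^{-1}\alpha^*$ block by block, and then isolates the diagonal part $(1-\theta)\,\mathrm{diag}(P_{T_0},P_{T_1})$ and the off-diagonal remainder, finally checking its action on $(t_0,0)^T$ and $(t'_0,0)^T$ exactly as you describe. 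One small slip: your displayed $\theta=-\mathcal{K}_{T_0}\|t_0\|^2/(\|t_1\|^2+\mathcal{K}_{T_0}\|t_0\|^2)$ is inconsistent with your own determinant $\det h=\|t_0\|^2(\|t_1\|^2-\mathcal{K}_{T_0}\|t_0\|^2)$; the computation forces the minus sign in the denominator, so when you carry out step three just track that sign carefully.
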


\begin{proof}
Letting $\gamma_{0}(\lambda)=t_{0}(\lambda)$ and $\gamma_{1}(\lambda)=t^{\prime}_{0}(\lambda)-t_{1}(\lambda),$
where $t_{1}$ is an non-zero cross-section of $E_{T_{1}}$ and $t_{0}=-St_{1}.$ Then $\{\gamma_{0}, \gamma_{1}\}$ is a holomorphic frame of $E_{T}$ and
\begin{equation}
  \begin{aligned}
&P_{T}(\lambda)\\
=&\begin{pmatrix}
\gamma_{0}(\lambda) ,& \gamma_{1}(\lambda) \\
\end{pmatrix}
\begin{pmatrix}
\langle\gamma_{0}(\lambda),\gamma_{0}(\lambda)\rangle & \langle\gamma_{1}(\lambda),\gamma_{0}(\lambda)\rangle\\
\langle\gamma_{0}(\lambda),\gamma_{1}(\lambda)\rangle & \langle\gamma_{1}(\lambda),\gamma_{1}(\lambda)\rangle\\
\end{pmatrix}
^{-1}
\begin{pmatrix}
\gamma_{0}^{*}(\lambda)\\
\gamma_{1}^{*}(\lambda)\\
\end{pmatrix}\\
=&\begin{pmatrix}
t_{0}(\lambda) & t^{\prime}_{0}(\lambda) \\
0 & t_{1}(\lambda) \\
\end{pmatrix}
\begin{pmatrix}
h_{0}(\lambda) & \frac{\partial}{\partial\lambda} h_{0}(\lambda) \\
\frac{\partial}{\partial\overline{\lambda}} h_{0}(\lambda) &
\,\, \frac{\partial^{2}}{\partial\lambda\partial\overline{\lambda}} h_{0}(\lambda)+h_{1}(\lambda) \\
\end{pmatrix}
^{-1}
\begin{pmatrix}
t_{0}^{*}(\lambda) & 0 \\
t_{0}'^{*}(\lambda) & t_{1}^{*}(\lambda) \\
\end{pmatrix}\\
=&\frac{h_{0}(\lambda)h_{1}(\lambda)}{h_{0}(\lambda)h_{1}(\lambda)
-\mathcal{K}_{T_{0}}(\lambda)h_{0}^{2}(\lambda)}
\begin{pmatrix}
t_{0}(\lambda)h_{0}^{-1}(\lambda)t_{0}^{*}(\lambda) & 0 \\
0 & t_{1}(\lambda)h_{1}^{-1}(\lambda)t_{1}^{*}(\lambda) \\
\end{pmatrix}-\frac{\mathcal{K}_{T_{0}}(\lambda)h_{0}^{2}(\lambda)}{h_{0}(\lambda)h_{1}(\lambda)
-\mathcal{K}_{T_{0}}(\lambda)h_{0}^{2}(\lambda)}
\\
&
\begin{pmatrix}
\frac{-t_{0}(\lambda)\frac{\partial^{2}}{\partial\lambda\partial\overline{\lambda}}
h_{0}(\lambda)t_{0}^{*}(\lambda)+t_{0}(\lambda)\frac{\partial }{\partial\lambda} h_{0}(\lambda)t_{0}'^{*}(\lambda)+t'_{0}(\lambda)\frac{\partial}{\partial\overline{\lambda}}
h_{0}(\lambda)t_{0}^{*}(\lambda)-t'_{0}(\lambda)h_{0}(\lambda)t_{0}'^{*}(\lambda)}
{\mathcal{K}_{T_{0}}(\lambda)h_{0}^{2}(\lambda)} & \,\,\frac{t_{0}(\lambda)
\frac{\partial}{\partial\lambda} h_{0}(\lambda)t_{1}^{*}(\lambda)-t'_{0}(\lambda)h_{0}(\lambda)t_{1}^{*}(\lambda)
}{\mathcal{K}_{T_{0}}(\lambda)h_{0}^{2}(\lambda)} \\
\frac{t_{1}(\lambda)\frac{\partial}{\partial\overline{\lambda}} h_{0}(\lambda)t_{0}^{*}(\lambda)-t_{1}(\lambda)h_{0}(\lambda)t_{0}'^{*}(\lambda)
}{\mathcal{K}_{T_{0}}(\lambda)h_{0}^{2}(\lambda)} &  0\\
\end{pmatrix}\\
:=&(1-\theta(\lambda))\begin{pmatrix}P_{T_{0}}(\lambda)  &  0 \\0  &  P_{T_{1}}(\lambda) \\ \end{pmatrix} +\theta(\lambda) \begin{pmatrix} F_{P_{T}}(\lambda)  &  S_{P_{T}}(\lambda) \\ S_{P_{T}}^{*}(\lambda)  &  0 \\ \end{pmatrix},
\nonumber
  \end{aligned}
\end{equation}
where $h_{0}(\lambda)=\Vert t_{0}(\lambda)\Vert^{2}$, $h_{1}(\lambda)=\Vert t_{1}(\lambda)\Vert^{2}$ and $\theta(\lambda)=-\frac{\mathcal{K}_{T_{0}}(\lambda)h_{0}^{2}(\lambda)}{h_{0}(\lambda)h_{1}(\lambda)
-\mathcal{K}_{T_{0}}(\lambda)h_{0}^{2}(\lambda)}$. We have that $\theta(\lambda)\Vert t_{1}(\lambda)\Vert^{2}+(1+\theta(\lambda))\mathcal{K}_{T_{0}}(\lambda)\Vert t_{0}(\lambda)\Vert^{2}=0$ for all $\lambda\in\Omega,$
\begin{equation*}
\begin{aligned}
&\begin{pmatrix}
F_{P_{T}}(\lambda)  &  S_{P_{T}}(\lambda) \\
S_{P_{T}}^{*}(\lambda)  &  0 \\
\end{pmatrix}
\begin{pmatrix}
t_{0}(\lambda)\\
0 \\
\end{pmatrix}\\
=&
\begin{pmatrix}
\frac{t_{0}(\lambda)\left(-\frac{\partial^{2}}
{\partial\lambda\partial\overline{\lambda}} h_{0}(\lambda)t_{0}^{*}(\lambda)
t_{0}(\lambda)+\frac{\partial }{\partial\lambda} h_{0}(\lambda)t_{0}'^{*}(\lambda)t_{0}(\lambda)\right)}{\mathcal{K}_{T_{0}}
(\lambda)h_{0}^{2}(\lambda)}+\frac {t'_{0}(\lambda)
\left(\frac{\partial }{\partial\overline{\lambda}} h_{0}(\lambda)t_{0}^{*}(\lambda)t_{0}(\lambda)
-h_{0}(\lambda)t_{0}'^{*}(\lambda)t_{0}(\lambda)\right)}
{\mathcal{K}_{T_{0}}(\lambda)h_{0}^{2}(\lambda)}  \\
\frac{t_{1}(\lambda)\left(\frac{\partial}{\partial\overline{\lambda}} h_{0}(\lambda)t_{0}^{*}(\lambda)t_{0}(\lambda)
-h_{0}(\lambda)t_{0}'^{*}(\lambda)t_{0}(\lambda)\right)}
{\mathcal{K}_{T_{0}}(\lambda)h_{0}^{2}(\lambda)}\\
\end{pmatrix}
\\
=&
\begin{pmatrix}
\frac{t_{0}(\lambda)\left(-\frac{\partial^{2}}{\partial\lambda\partial\overline{\lambda}} h_{0}(\lambda)h_{0}(\lambda)+\frac{\partial}{\partial\lambda} h_{0}(\lambda)\frac{\partial}{\partial\overline{\lambda} } h_{0}(\lambda)\right)}{\mathcal{K}_{T_{0}}(\lambda)h_{0}^{2}(\lambda)}+\frac {t'_{0}(\lambda)\left(\frac{\partial}{\partial\overline{\lambda}} h_{0}(\lambda)h_{0}(\lambda)-h_{0}(\lambda)\frac{\partial}{\partial\overline{\lambda}} h_{0}(\lambda)\right)}{\mathcal{K}_{T_{0}}(\lambda)h_{0}^{2}(\lambda)}  \\
\frac{t_{1}(\lambda)\left(\frac{\partial}{\partial\overline{\lambda}} h_{0}(\lambda)h_{0}(\lambda)-h_{0}(\lambda)\frac{\partial}{\partial\overline{\lambda}} h_{0}(\lambda)\right)}{\mathcal{K}_{T_{0}}(\lambda)h_{0}^{2}(\lambda)}\\
\end{pmatrix}
\\
=&
\begin{pmatrix}
t_{0}(\lambda)\\
0 \\
\end{pmatrix},
\end{aligned}
\end{equation*}
and
\begin{equation*}
\begin{aligned}
&\begin{pmatrix}
F_{P_{T}}(\lambda)  &  S_{P_{T}}(\lambda) \\
S_{P_{T}}^{*}(\lambda)  &  0 \\
\end{pmatrix}
\begin{pmatrix}
t'_{0}(\lambda) \\
0 \\
\end{pmatrix}\\
=&
\begin{pmatrix}
\frac{t_{0}(\lambda)\left(-\frac{\partial}{\partial\overline{\lambda}}  h_{0}(\lambda)t_{0}^{*}(\lambda)t'_{0}(\lambda)+\frac{\partial }{\partial\lambda} h_{0}(\lambda)t_{0}'^{*}(\lambda)t'_{0}(\lambda)\right)}{\mathcal{K}_{T_{0}}
(\lambda)h_{0}^{2}(\lambda)}+\frac {t'_{0}(\lambda)\left(\frac{\partial }{\partial\lambda} h_{0}(\lambda)t_{0}^{*}(\lambda)t'_{0}(\lambda)-
h_{0}(\lambda)t_{0}'^{*}(\lambda)t'_{0}(\lambda)\right)}
{\mathcal{K}_{T_{0}}(\lambda)h_{0}^{2}(\lambda)}  \\
\frac{t_{1}(\lambda)\left(\frac{\partial}{\partial\overline{\lambda}} h_{0}(\lambda)t_{0}^{*}(\lambda)t'_{0}(\lambda)-h_{0}(\lambda)
t_{0}'^{*}(\lambda)t'_{0}(\lambda)\right)}{\mathcal{K}_{T_{0}}(\lambda)h_{0}^{2}(\lambda)}\\
\end{pmatrix}
\\
=&
\begin{pmatrix}
\frac{t_{0}(\lambda)\left(-\frac{\partial^{2}}
{\partial\lambda\partial\overline{\lambda}} h_{0}(\lambda)\frac{\partial}{\partial\overline{\lambda}}h_{0}(\lambda)+\frac{\partial }{\partial\lambda} h_{0}(\lambda)\frac{\partial^{2}}
{\partial\lambda\partial\overline{\lambda}} h_{0}(\lambda)\right)}{\mathcal{K}_{T_{0}}h_{0}^{2}(\lambda)}+\frac {t'_{0}(\lambda)\left(\frac{\partial}{\partial\overline{\lambda}} h_{0}(\lambda)\frac{\partial }{\partial\lambda} h_{0}(\lambda)-h_{0}(\lambda)\frac{\partial^{2}}
{\partial\lambda\partial\overline{\lambda}}h_{0}(\lambda)\right)}{\mathcal{K}_{T_{0}}(\lambda)
h_{0}^{2}(\lambda)}  \\
\frac{t_{1}(\lambda)\left(\frac{\partial}{\partial\overline{\lambda}} h_{0}(\lambda)\frac{\partial }{\partial\lambda} h_{0}(\lambda)-h_{0}(\lambda)\frac{\partial^{2}}{\partial\overline{\lambda}^{2}} h_{0}(\lambda)\right)}{\mathcal{K}_{T_{0}}(\lambda)h_{0}^{2}(\lambda)}\\
\end{pmatrix}
\\
=&
\begin{pmatrix}
t'_{0}(\lambda)\\
t_{1}(\lambda)\\
\end{pmatrix}.
\end{aligned}
\end{equation*}
\end{proof}

From the above lemma, we obtain that for all $\lambda\in\Omega$, $F_{P_{T}}(\lambda)t_{0}(\lambda)=t_{0}(\lambda),$ and $F_{P_{T}}(\lambda)t'_{0}(\lambda)=t'_{0}(\lambda)$, so $F_{P_{T}}(\lambda)$ is an identity mapping from the holomorphic frame $\mathcal{J}_{1}(t_{0})=\{t_{0},t_{0}^{'}\}$
to $ \ker(T-\lambda)$, i.e.,
\begin{equation*}
P_{T}(\lambda)=(I-\theta(\lambda))
\begin{pmatrix}
P_{T_{0}}(\lambda)  &  0 \\
0  &  P_{T_{1}}(\lambda) \\
\end{pmatrix}
+\theta(\lambda)
\begin{pmatrix}
I_{P_{T}}(\lambda) &  S_{P_{T}}(\lambda) \\
S_{P_{T}}^{*}(\lambda)  &  0 \\
\end{pmatrix}
.
\end{equation*}


\begin{thm}
Suppose that
\begin{math}
T=
\left(
\begin{smallmatrix}
T_{0}  &  S \\
0  &  T_{1} \\
\end{smallmatrix}
\right)
\in\mathcal{FB}_{2}(\Omega)\cap\mathcal{L}(\mathcal{H})
\end{math}
and
\begin{math}
\widetilde{T}=
\left(
\begin{smallmatrix}
\widetilde{T}_{0}  &  \widetilde{S} \\
0  &  \widetilde{T}_{1} \\
\end{smallmatrix}
\right)
\in\mathcal{FB}_{2}(\Omega)\cap\mathcal{L}(\widetilde{\mathcal{H}}).
\end{math}
If $t_{1}$ and $\widetilde{t}_{1}$ are non-zero cross-section of the line bundle $E_{T_{1}}$ and $E_{\widetilde{T}_{1}}$, respectively, and $t_{0}=-St_{1}$, $\widetilde{t}_{0}=-\widetilde{S}\widetilde{t}_{1}.$
 Then following statements are equivalent:

\begin{itemize}
  \item [(1)] The orthogonal projection $P_{T}(\lambda)$ and $P_{\widetilde{T}}(\lambda)$ are unitarily equivalent.
  \item [(2)] The operators $T$ and $S$ are unitarily equivalent.
  \item [(3)] $\mathcal{K}_{T_{0}}=\mathcal{K}_{\widetilde{T}_{0}}$ (or $\mathcal{K}_{T_{1}}=\mathcal{K}_{\widetilde{T}_{1}}$) and $\frac{\Vert St_{1}\Vert^{2}}{\Vert t_{1}\Vert^{2}}=\frac{\Vert\widetilde{S}\widetilde{t}_{1}\Vert^{2}}{\Vert\widetilde{
      t}_{1}\Vert^{2}}$.
  \item [(4)] There exists a unitary $U$ such that
  \begin{math}
\left(
\begin{smallmatrix}
F_{P_{T}}  &  S_{P_{T}} \\
S_{P_{T}}^{*}  &  0 \\
\end{smallmatrix}
\right)=U^{\ast}
\left(
\begin{smallmatrix}
F_{P_{\widetilde{T}}}  &  S_{P_{\widetilde{T}}} \\
S_{P_{\widetilde{T}}}^{*}  &  0 \\
\end{smallmatrix}
\right)U,
\end{math}
where $U=\text{diag}\{U_{00},U_{11}\}$.
\end{itemize}
\end{thm}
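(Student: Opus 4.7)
The plan is to establish the chain $(2) \Leftrightarrow (1) \Leftrightarrow (4)$, together with $(2) \Leftrightarrow (3)$, using the structural decomposition of $P_{T}(\lambda)$ from Lemma \ref{lm22} together with Lemmas \ref{le001}, \ref{lm4.4}, and \ref{prop22}.

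The equivalence $(1) \Leftrightarrow (2)$ is immediate from Lemma \ref{prop22}, since that lemma already identifies unitary equivalence of $T$ and $\widetilde{T}$ in $\mathcal{B}_{n}(\Omega)$ with the existence of a unitary $U$ implementing $U P_{T}(\lambda) = P_{\widetilde{T}}(\lambda) U$ for all $\lambda\in\Omega$. For the implication $(2) \Rightarrow (3)$, invoke Lemma \ref{le001} to obtain a block-diagonal unitary $U = \operatorname{diag}\{U_{0}, U_{1}\}$ implementing $UT = \widetilde{T}U$. Comparing diagonal entries gives $U_{i} T_{i} = \widetilde{T}_{i} U_{i}$ for $i=0,1$, hence $\mathcal{K}_{T_{0}} = \mathcal{K}_{\widetilde{T}_{0}}$ (and $\mathcal{K}_{T_{1}} = \mathcal{K}_{\widetilde{T}_{1}}$) by Lemma \ref{lm4.4}. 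The off-diagonal entry gives $U_{0} S = \widetilde{S} U_{1}$; choosing $\widetilde{t}_{1} = U_{1} t_{1}$ as our non-zero section of $E_{\widetilde{T}_{1}}$ forces $\widetilde{t}_{0} = -\widetilde{S}\widetilde{t}_{1} = -U_{0} S t_{1} = U_{0} t_{0}$, and the unitarity of $U_{0}, U_{1}$ yields $\|\widetilde{S}\widetilde{t}_{1}\|^{2}/\|\widetilde{t}_{1}\|^{2} = \|S t_{1}\|^{2}/\|t_{1}\|^{2}$.

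For the reverse implication $(3) \Rightarrow (2)$, I will construct $U_{0}, U_{1}$ directly. Since $\mathcal{K}_{T_{1}} = \mathcal{K}_{\widetilde{T}_{1}}$, Lemma \ref{lm4.4} produces a unitary $U_{1}$ with $U_{1} T_{1} = \widetilde{T}_{1} U_{1}$, and we can normalize the frame so that $U_{1} t_{1} = \widetilde{t}_{1}$. Define $U_{0}$ on the cyclic vector by $U_{0}(T_{0}^{k} t_{0}) := \widetilde{T}_{0}^{k}\widetilde{t}_{0}$; the hypothesis $\|S t_{1}\|^{2}/\|t_{1}\|^{2} = \|\widetilde{S}\widetilde{t}_{1}\|^{2}/\|\widetilde{t}_{1}\|^{2}$ together with $\mathcal{K}_{T_{0}} = \mathcal{K}_{\widetilde{T}_{0}}$ ensures $\|t_{0}\|/\|\widetilde{t}_{0}\|$ agrees pointwise, so that the Gram matrices of the cyclic vectors in $\mathcal{H}_{0}$ and $\widetilde{\mathcal{H}}_{0}$ coincide, forcing $U_{0}$ to extend to a well-defined unitary intertwining $T_{0}$ and $\widetilde{T}_{0}$. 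The main technical step here is verifying that $U_{0} S = \widetilde{S} U_{1}$ after these choices; this follows from $U_{0} S t_{1} = -U_{0} t_{0} = -\widetilde{t}_{0} = \widetilde{S} \widetilde{t}_{1} = \widetilde{S} U_{1} t_{1}$ together with commutation with $T_{1}$ and density of the cyclic span. Then $U = \operatorname{diag}\{U_{0}, U_{1}\}$ implements $UT = \widetilde{T}U$.

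For $(2) \Leftrightarrow (4)$, use the decomposition of Lemma \ref{lm22}:
\begin{equation*}
P_{T}(\lambda)=(1-\theta(\lambda))\begin{pmatrix}P_{T_{0}}(\lambda) & 0\\ 0 & P_{T_{1}}(\lambda)\end{pmatrix}+\theta(\lambda)\begin{pmatrix}F_{P_{T}}(\lambda) & S_{P_{T}}(\lambda)\\ S_{P_{T}}^{*}(\lambda) & 0\end{pmatrix},
\end{equation*}
and note that $\theta(\lambda)$ depends only on $\mathcal{K}_{T_{0}}(\lambda)$, $\|t_{0}(\lambda)\|^{2}$, and $\|t_{1}(\lambda)\|^{2}$. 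If $U = \operatorname{diag}\{U_{00}, U_{11}\}$ implements $UT = \widetilde{T}U$, then conjugation preserves the diagonal summand and, because $\theta = \widetilde\theta$ by the already-proven $(2)\Rightarrow(3)$, it must also conjugate the second block matrix, which is $(4)$. Conversely, given $(4)$ and $U = \operatorname{diag}\{U_{00},U_{11}\}$, conjugating the representations evaluated on $t_{0}, t_{0}'$ yields the intertwiner on a frame of $\ker(T-\lambda)$ for each $\lambda$, which after invoking Lemma \ref{prop22} upgrades to $UP_{T} = P_{\widetilde{T}}U$, giving $(1)$, hence $(2)$. The main obstacle is the bookkeeping in $(3) \Rightarrow (2)$: one must track how the two curvature equalities, combined with the single ratio condition, together determine both $\|t_{0}\|$ and $\|t_{1}\|$ up to the normalizations absorbed into $U_{0}$ and $U_{1}$.
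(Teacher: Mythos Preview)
Your proposal follows essentially the same route as the paper. The equivalence $(1)\Leftrightarrow(2)$ via Lemma \ref{prop22}, the use of the block-diagonal unitary from Lemma \ref{le001}, the splitting of $P_T$ via Lemma \ref{lm22} with the observation $\theta=\widetilde\theta$, and the evaluation of the intertwining relation on $t_0$ and $t_0'$ to pass from $(4)$ back to the other statements --- all of this matches the paper's argument. The only structural difference is that the paper routes $(4)$ through $(3)$ rather than directly to $(2)$, but the substance is identical.

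Two remarks. First, for $(2)\Leftrightarrow(3)$ the paper simply cites Theorem 2.10 of \cite{FBnJJKM}, whereas you attempt a direct construction of $U_0,U_1$. Your sketch uses \emph{both} curvature equalities, but statement $(3)$ only hypothesizes one of them; you should note that the ratio condition $\|t_0\|^2/\|t_1\|^2=\|\widetilde t_0\|^2/\|\widetilde t_1\|^2$, after applying $\partial\bar\partial\log$, gives $\mathcal{K}_{T_0}-\mathcal{K}_{T_1}=\mathcal{K}_{\widetilde T_0}-\mathcal{K}_{\widetilde T_1}$, so either equality implies the other. Also, the normalization ``$U_1 t_1=\widetilde t_1$'' is not free: a unitary intertwiner gives $U_1 t_1=\psi\,\widetilde t_1$ for some nonvanishing holomorphic $\psi$, and you must carry this factor through (as the paper does with its $\phi$). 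Second, in your $(4)\Rightarrow(2)$ you should make explicit that the range of the block $\left(\begin{smallmatrix}F_{P_T}&S_{P_T}\\ S_{P_T}^*&0\end{smallmatrix}\right)$ contains $\ker(T-\lambda)$ (as one sees from its action on $(t_0,0)^T$ and $(t_0',0)^T$), so that the intertwining relation forces $U\ker(T-\lambda)\subseteq\ker(\widetilde T-\lambda)$; this is exactly the computation the paper carries out in its $(4)\Rightarrow(3)$.
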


\begin{proof}
$(1) \Leftrightarrow (2)$. Take $n=2$ in Lemma \ref{prop22}.

$(2) \Leftrightarrow (3)$. See Theorem 2.10 of \cite{FBnJJKM}.

$(3)\Rightarrow(4)$.
If $\mathcal{K}_{T_{0}}=\mathcal{K}_{\widetilde{T}_{0}}$ and $\frac{\Vert St_{1}\Vert^{2}}{\Vert t_{1}\Vert^{2}}=\frac{\Vert \widetilde{S}\widetilde{t}_{1}\Vert^{2}}{\Vert\widetilde{t}_{1}\Vert^{2}},$ we have that
$\frac{\Vert t_{0}(\lambda)\Vert}{\Vert t_{1}(\lambda)\Vert}=
\frac{\Vert \widetilde{t}_{0}(\lambda)\Vert}{\Vert \widetilde{t}_{1}(\lambda)\Vert}$ for all $\lambda\in\Omega.$ So $$\frac{h_{0}(\lambda)h_{1}(\lambda)}{h_{0}(\lambda)h_{1}(\lambda)-h_{0}^{2}(\lambda)
\mathcal{K}_{T_{0}}(\lambda)}=
\frac{\widetilde{h}_{0}(\lambda)\widetilde{h}_{1}(\lambda)}{\widetilde{h}_{0}(\lambda)
\widetilde{h}_{1}(\lambda)-\widetilde{h}_{0}^{2}(\lambda)\mathcal{K}_{\widetilde{T}_{0}}(\lambda)
}$$
and
$$\frac{h_{0}^{2}(\lambda)\mathcal{K}_{T_{0}}(\lambda)}
{h_{0}(\lambda)h_{1}(\lambda)-h_{0}^{2}(\lambda)\mathcal{K}_{T_{0}}(\lambda)}
=\frac{\widetilde{h}_{0}^{2}(\lambda)\mathcal{K}_{\widetilde{T}_{0}}(\lambda)}
{\widetilde{h}_{0}(\lambda)\widetilde{h}_{1}(\lambda)-
\widetilde{h}_{0}^{2}(\lambda)\mathcal{K}_{\widetilde{T}_{0}}(\lambda)},\quad \lambda\in\Omega,$$
where $h_{i}(\lambda)=\Vert t_{i}(\lambda)\Vert^{2}$ and $\widetilde{h}_{i}(\lambda)=\Vert \widetilde{t}_{i}(\lambda)\Vert^{2}$ for $i=0,1.$
By Lemma \ref{le001} and Lemma \ref{prop22}, there is an unitary operator \begin{math}
U=
\left(
\begin{smallmatrix}
U_{00}  &  0 \\
0  &  U_{11} \\
\end{smallmatrix}
\right)
\end{math} such that
$UP_{T}(\lambda)=P_{\widetilde{T}}(\lambda)U$, that is,
\begin{equation*}
\begin{aligned}
&\begin{pmatrix}
U_{00}  &  0 \\
0  &  U_{11} \\
\end{pmatrix}
\left[\frac{h_{0}(\lambda)h_{1}(\lambda)}{h_{0}(\lambda)h_{1}(\lambda)-h_{0}^{2}(\lambda)
\mathcal{K}_{T_{0}}(\lambda)}
\begin{pmatrix}
P_{T_{0}}(\lambda)  &  0 \\
0  &  P_{T_{1}}(\lambda)\\
\end{pmatrix}
-\frac{h_{0}^{2}(\lambda)\mathcal{K}_{T_{0}}(\lambda)}{h_{0}(\lambda)h_{1}(\lambda)-
h_{0}^{2}(\lambda)\mathcal{K}_{T_{0}}(\lambda)}
\begin{pmatrix}
F_{P_{T}}(\lambda)  &  S_{P_{T}} (\lambda)\\
S_{P_{T}}^{*}(\lambda)  &  0 \\
\end{pmatrix}
\right]\\
=&\left[\frac{\widetilde{h}_{0}(\lambda)\widetilde{h}_{1}(\lambda)}
{\widetilde{h}_{0}(\lambda)\widetilde{h}_{1}(\lambda)-\widetilde{h}_{0}^{2}(\lambda)
\mathcal{K}_{\widetilde{T}_{0}}(\lambda)}
\begin{pmatrix}
P_{\widetilde{T}_{0}}(\lambda)  &  0 \\
0  &  P_{\widetilde{T}_{1}}(\lambda)\\
\end{pmatrix}
-\frac{\widetilde{h}_{0}^{2}(\lambda)\mathcal{K}_{\widetilde{T}_{0}}(\lambda)}
{\widetilde{h}_{0}(\lambda)\widetilde{h}_{1}(\lambda)-\widetilde{h}_{0}^{2}(\lambda)
\mathcal{K}_{\widetilde{T}_{0}}(\lambda)}
\begin{pmatrix}
F_{P_{\widetilde{T}}}(\lambda)  &  S_{P_{\widetilde{T}}} (\lambda)\\
S_{P_{\widetilde{T}}}^{*}(\lambda)  &  0 \\
\end{pmatrix}
\right]
\begin{pmatrix}
U_{00}  &  0 \\
0  &  U_{11} \\
\end{pmatrix}.
\end{aligned}
\end{equation*}
It follows that
\begin{equation*}
\begin{pmatrix}
U_{00}  &  0 \\
0  &  U_{11} \\
\end{pmatrix}
\begin{pmatrix}
P_{T_{0}}(\lambda)  &  0 \\
0  &  P_{T_{1}}(\lambda)\\
\end{pmatrix}
=
\begin{pmatrix}
P_{\widetilde{T}_{0}}(\lambda)  &  0 \\
0  &  P_{\widetilde{T}_{1}}(\lambda)\\
\end{pmatrix}
\begin{pmatrix}
U_{00}  &  0 \\
0  &  U_{11} \\
\end{pmatrix},\quad \lambda\in\Omega.
\end{equation*}
Thus,
\begin{equation*}
\begin{pmatrix}
U_{00}  &  0 \\
0  &  U_{11} \\
\end{pmatrix}
\begin{pmatrix}
F_{P_{T}} (\lambda) &  S_{P_{T}} (\lambda)\\
S_{P_{T}}^{*} (\lambda) &  0 \\
\end{pmatrix}
=
\begin{pmatrix}
F_{P_{\widetilde{T}}} (\lambda) &  S_{P_{\widetilde{T}}}(\lambda) \\
S_{P_{\widetilde{T}}}^{*}(\lambda)  &  0 \\
\end{pmatrix}
\begin{pmatrix}
U_{00}  &  0 \\
0  &  U_{11} \\
\end{pmatrix},\quad \lambda\in\Omega.
\end{equation*}

$(4) \Rightarrow(3)$. If
there is a unitary operator $U=\text{diag}\{U_{00},U_{11}\}$ such that
 \begin{math}
U\left(
\begin{smallmatrix}
F_{P_{T}}  &  S_{P_{T}} \\
S_{P_{T}}^{*}  &  0 \\
\end{smallmatrix}
\right)
\end{math}
=
\begin{math}
\left(
\begin{smallmatrix}
F_{P_{\widetilde{T}}}  &  S_{P_{\widetilde{T}}} \\
S_{P_{\widetilde{T}}}^{*}  &  0 \\
\end{smallmatrix}
\right)U,
\end{math}
we have that
 \begin{equation}\label{e4.1}
\begin{pmatrix}
U_{00}  &  0 \\
0  &  U_{11} \\
\end{pmatrix}
\begin{pmatrix}
F_{P_{T}}(\lambda)  &  S_{P_{T}}(\lambda) \\
S_{P_{T}}^{*}(\lambda)  &  0 \\
\end{pmatrix}
\begin{pmatrix}
t_{0}(\lambda)\\
0\\
\end{pmatrix}
=
\begin{pmatrix}
F_{P_{\widetilde{T}}}(\lambda)  &  S_{P_{\widetilde{T}}}(\lambda) \\
S_{P_{\widetilde{T}}}^{*} (\lambda) &  0 \\
\end{pmatrix}
\begin{pmatrix}
U_{00}  &  0 \\
0  &  U_{11} \\
\end{pmatrix}
\begin{pmatrix}
t_{0}(\lambda)\\
0\\
\end{pmatrix},\quad \lambda\in\Omega.
\end{equation}
Without losing generality, assume that $U\begin{pmatrix}
t_{0}(\lambda)\\
0\\
\end{pmatrix}=\phi(\lambda)\begin{pmatrix}
\widetilde{t}_{0}(\lambda)\\
0\\
\end{pmatrix}+\psi(\lambda)\begin{pmatrix}
\widetilde{t}_{0}'(\lambda)\\
\widetilde{t}_{1}(\lambda)\\
\end{pmatrix}$ for $\phi,\psi\in \text{Hol} (\Omega).$
By Lemma \ref{lm22}, we obtain that
\begin{equation*}
\begin{aligned}
\begin{pmatrix}
U_{00}t_{0}(\lambda)\\
0\\
\end{pmatrix}
=&
\begin{pmatrix}
U_{00}  &  0 \\
0  &  U_{11} \\
\end{pmatrix}
\begin{pmatrix}
F_{P_{T}}(\lambda)  &  S_{P_{T}}(\lambda) \\
S_{P_{T}}^{*}(\lambda)  &  0 \\
\end{pmatrix}
\begin{pmatrix}
t_{0}(\lambda)\\
0\\
\end{pmatrix}
\\
=&
\begin{pmatrix}
F_{P_{\widetilde{T}}}(\lambda)  &  S_{P_{\widetilde{T}}}(\lambda) \\
S_{P_{\widetilde{T}}}^{*} (\lambda) &  0 \\
\end{pmatrix}
\left[\phi(\lambda)
\begin{pmatrix}
\widetilde t_{0}(\lambda)\\
0
\end{pmatrix}
+\psi(\lambda)
\begin{pmatrix}
\widetilde t_{0}'(\lambda)\\
\widetilde{t}_{1}(\lambda)\\
\end{pmatrix}
\right]
\\
=&
\begin{pmatrix}
\phi(\lambda)\widetilde{t}_{0}(\lambda)\\
0\\
\end{pmatrix}
+
\begin{pmatrix}
\psi(\lambda)\widetilde{t}_{0}'(\lambda)+\psi(\lambda)S_{P_{\widetilde{T}}}(\lambda)\widetilde{t}_{1}(\lambda)\\
\psi(\lambda)\widetilde{t}_{1}(\lambda)\\
\end{pmatrix},
\end{aligned}
\end{equation*}
then $\psi=0.$ Thus,
\begin{equation}\label{e4.1-1}
U_{00}t_{0}(\lambda)=\phi(\lambda)\widetilde{t}_{0}(\lambda),\quad \lambda\in\Omega.
\end{equation}
From (\ref{e4.1}), (\ref{e4.1-1}) and Lemma \ref{lm22}, we have that
\begin{equation*}
\begin{aligned}
 U
\begin{pmatrix}
t_{0}'(\lambda) \\
t_{1}(\lambda) \\
\end{pmatrix}
=&
U
\begin{pmatrix}
F_{P_{T}}(\lambda)  &  S_{P_{T}}(\lambda) \\
S_{P_{T}}^{*}(\lambda)  &  0 \\
\end{pmatrix}
\begin{pmatrix}
t_{0}^{\prime}(\lambda)\\
0\\
\end{pmatrix}
\\
=&
\begin{pmatrix}
F_{P_{\widetilde{T}}}(\lambda)  &  S_{P_{\widetilde{T}}}(\lambda) \\
S_{P_{\widetilde{T}}}^{*} (\lambda) &  0 \\
\end{pmatrix}
\begin{pmatrix}
\phi'(\lambda)\widetilde{t}_{0}(\lambda)+\phi(\lambda)\widetilde{t}_{0}'(\lambda)\\
0
\end{pmatrix}
\\
=&
\begin{pmatrix}
\phi'(\lambda)\widetilde{t}_{0}(\lambda)\\
0\\
\end{pmatrix}
+
\begin{pmatrix}
\phi(\lambda)\widetilde{t}_{0}'(\lambda) \\
\phi(\lambda)\widetilde{t}_{1}(\lambda)\\
\end{pmatrix}.
\end{aligned}
\end{equation*}
Thus,
\begin{equation}\label{e4.1-2}
U_{11}t_{1}(\lambda)=\phi(\lambda)\widetilde{t}_{1}(\lambda),\quad \lambda\in\Omega.
\end{equation}
From (\ref{e4.1-1}) and (\ref{e4.1-2}), we have that
$$\frac{\Vert St_{1}(\lambda)\Vert^{2}}{\Vert t_{1}(\lambda)\Vert^{2}}=\frac{\Vert t_{0}(\lambda)\Vert^{2}}{\Vert t_{1}(\lambda)\Vert^{2}}=\frac{\Vert U_{00}t_{0}(\lambda)\Vert^{2}}{\Vert U_{11}t_{1}(\lambda)\Vert^{2}}=\frac{\Vert \phi(\lambda)\widetilde{t}_{0}(\lambda)\Vert^{2}}{\Vert \phi(\lambda)\widetilde{t}_{1}(\lambda)\Vert^{2}}
=\frac{\vert\phi(\lambda)\vert^{2}\Vert\widetilde{t}_{0}(\lambda)\Vert^{2}}
{\vert\phi(\lambda)\vert^{2}\Vert\widetilde{t}_{1}(\lambda)\Vert^{2}}
=\frac{\Vert\widetilde{t}_{0}(\lambda)\Vert^{2}}{\Vert\widetilde{t}_{1}(\lambda)\Vert^{2}}
=\frac{\Vert \widetilde{S}\widetilde{t}_{1}(\lambda)\Vert^{2}}{\Vert\widetilde{t}_{1}(\lambda)\Vert^{2}},$$
$U_{00}\ker(T_{0}-\lambda)=\ker(\widetilde{T}_{0}-\lambda)$ and $U_{11}\ker(T_{1}-\lambda)=\ker(\widetilde{T}_{1}-\lambda)$ for any $\lambda\in\Omega$ and unitary operators $U_{00}$ and $U_{11}$. Since $T_{i},\widetilde{T}_{i}\in\mathcal{B}_{1}(\Omega),\,i=0,1,$  then $T_{i}\sim_{u}\widetilde{T}_{i},$ we obtain that $\mathcal{K}_{T_{0}}=\mathcal{K}_{\widetilde{T}_{0}}$ and $\mathcal{K}_{T_{1}}=\mathcal{K}_{\widetilde{T}_{1}}$ from Lemma \ref{lm4.4}.
The proof is complete.
\end{proof}

\begin{corollary} Let $T,\widetilde{T}\in\mathcal{FB}_{n}(\Omega).$ If $P_{T}\sim_{u}P_{\widetilde{T}},$ then $P_{T(i)}\sim_{u}P_{\widetilde{T}(i)}$, where \begin{equation*} T(i)= \begin{pmatrix} T_{0}  &  S_{0,1}  & \cdots & S_{0,i-1} \\   0   &  T_{1}   & \cdots & S_{1,i-1} \\ \vdots &  \vdots  & \ddots & \vdots    \\     0  &      0   &     \cdots  & T_{i-1} \end{pmatrix} \quad\text{and}\quad \widetilde{T}(i)= \begin{pmatrix} \widetilde{T}_{0}  &  \widetilde{S}_{0,1}  & \cdots & \widetilde{S}_{0,i-1} \\    0   &  \widetilde{T}_{1}   & \cdots & \widetilde{S}_{1,i-1} \\ \vdots &  \vdots  & \ddots & \vdots    \\     0  &      0   &    \cdots & \widetilde{T}_{i-1} \end{pmatrix}, \quad 1\leq i\leq n. \end{equation*} \end{corollary}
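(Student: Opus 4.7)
The plan is to combine Lemma \ref{prop22}, which reduces unitary equivalence of the projections $P_T$ and $P_{\widetilde{T}}$ to unitary equivalence of the underlying operators $T$ and $\widetilde{T}$, with Lemma \ref{le001}, which forces any intertwining unitary between operators in $\mathcal{FB}_n(\Omega)$ to be block-diagonal with respect to the flag decomposition. The key observation is then that a block-diagonal intertwiner restricts canonically to the upper-left $i\times i$ corner.

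First I would invoke Lemma \ref{prop22}: the hypothesis $P_T\sim_u P_{\widetilde{T}}$ yields a unitary $U\in\mathcal{L}(\mathcal{H},\widetilde{\mathcal{H}})$ with $UP_T(\lambda)=P_{\widetilde{T}}(\lambda)U$ for every $\lambda\in\Omega$, and this (by the same lemma) is equivalent to $UT=\widetilde{T}U$. Since both $T,\widetilde{T}\in\mathcal{FB}_n(\Omega)$, Lemma \ref{le001} then guarantees that $U$ may be taken of the form $U=\text{diag}\{U_0,U_1,\ldots,U_{n-1}\}$, where each $U_j:\mathcal{H}_j\to\widetilde{\mathcal{H}}_j$ is unitary.

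Next I would write out $UT=\widetilde{T}U$ in block form. Because $U$ is block-diagonal and both $T$ and $\widetilde{T}$ are upper-triangular in the flag decomposition, the identity decouples into $U_jT_j=\widetilde{T}_jU_j$ for $0\le j\le n-1$ and $U_j S_{j,k}=\widetilde{S}_{j,k}U_k$ for $j<k$. Collecting the equations with both indices in $\{0,1,\dots,i-1\}$, I obtain $U(i)T(i)=\widetilde{T}(i)U(i)$, where $U(i):=\text{diag}\{U_0,\ldots,U_{i-1}\}$ is unitary. A short verification shows $T(i),\widetilde{T}(i)\in\mathcal{FB}_i(\Omega)$: the diagonal blocks $T_0,\ldots,T_{i-1}$ and $\widetilde{T}_0,\ldots,\widetilde{T}_{i-1}$ remain in $\mathcal{B}_1(\Omega)$, and the nonzero intertwining relations $T_jS_{j,j+1}=S_{j,j+1}T_{j+1}$ for $0\le j\le i-2$ are inherited unchanged from $T$. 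Consequently $T(i)\sim_u\widetilde{T}(i)$, and applying Lemma \ref{prop22} in the opposite direction to these truncations gives $P_{T(i)}\sim_u P_{\widetilde{T}(i)}$.

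The only subtle point — and the step I expect to require the most care — is the justification that the block-diagonal structure of $U$ really does restrict cleanly to the first $i$ components. Concretely, one has to check that no off-diagonal entry of the ambient equation $UT=\widetilde{T}U$ couples the upper-left $i\times i$ corner to the trailing blocks; this uses both that $U$ has no off-diagonal entries and that $T,\widetilde{T}$ are upper-triangular so the matrix product block $(UT)_{j,k}$ for $j,k<i$ depends only on $U_j$ and the entries $T_{j,k}$ with $k<i$. Once this decoupling is recorded, the remainder of the argument is essentially bookkeeping.
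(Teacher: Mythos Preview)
The paper states this corollary without proof, so there is no argument to compare against directly. Your proof is correct and is precisely the natural route the paper's presentation sets up: Lemma~\ref{prop22} converts $P_T\sim_u P_{\widetilde{T}}$ into $T\sim_u\widetilde{T}$, Lemma~\ref{le001} forces the intertwining unitary to be block-diagonal $U=\text{diag}\{U_0,\ldots,U_{n-1}\}$, the upper-left $i\times i$ block $U(i)$ intertwines $T(i)$ and $\widetilde{T}(i)$, and Lemma~\ref{prop22} applied to the truncations yields $P_{T(i)}\sim_u P_{\widetilde{T}(i)}$. The decoupling you flag as ``subtle'' is in fact immediate once $U$ is known to be block-diagonal, so no additional care is needed there.
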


A detailed study on the Cowen-Douglas operators was provided by C. L. Jiang and Z. Y. Wang with the upper triangular representation theorem, (see Theorem 1.49 in \cite{JW1998}). If the operator $T\in\mathcal{B}_{n}(\Omega)\cap\mathcal{L}(\mathcal{H})$, then there exists an orthogonal decomposition $\mathcal{H}=\mathcal{H}_{0}\oplus\mathcal{H}_{1}\oplus\cdots
\mathcal{H}_{n-1}$ and operators $T_{0}, T_{1}, \ldots, T_{n-1}$
in $\mathcal{B}_{1}(\Omega)$ such that
\begin{equation*}
T=
\begin{pmatrix}
T_{0}  &  S_{01} &\cdots & S_{0,n-1} \\
0      &  T_{1}  & \cdots & S_{1,n-1} \\
 \vdots&   \vdots& \ddots & \vdots    \\
    0  &   0     &  \cdots & T_{n-1}
\end{pmatrix}.
\end{equation*}
Let $\{\gamma_{0},\gamma_{1},\cdots,\gamma_{n-1}\}$ be a holomorphic frame of $E_{T}$ and $t_{i}:\Omega\rightarrow Gr(1,\mathcal{H}_{i})$
 be a holomorphic frame of $E_{T_{i}}, 0\leq i\leq n-1.$
Then the relationship between $\{\gamma_{i}\}_{i=0}^{n-1}$
and $\{t_{i}\}_{i=0}^{n-1}$ can be found in \cite{JJKM1,JJM2017},
indicating that
$\gamma_{j}=\psi_{0,j}(t_{0})+\cdots+\psi_{i,j}(t_{i})+\cdots+
\psi_{j-1,j}(t_{j-1})+t_{j}$ for $0\leq j\leq n-1,$
where $\psi_{i,j}$ are certain holomorphic bundle maps.
Meanwhile, a more accurate description of the relationship between $\{\gamma_{i}\}_{i=0}^{n-1}$
and $\{t_{i}\}_{i=0}^{n-1}$ of the operator class $\mathcal{OFB}_{n}(\Omega)$ is provided in \cite{JJKM1}.
\begin{definition}
An operator $T\in\mathcal{L}(\mathcal{H})$ is said to be in the class $\mathcal{OFB}_{n}(\Omega)$, if $T\in\mathcal{FB}_{n}(\Omega)$ and $T$ has the following form on $\mathcal{H}=\mathcal{H}_{0}\oplus\mathcal{H}_{1}\oplus\cdots\oplus\mathcal{H}_{n-1},$
\begin{equation*}
T=
\begin{pmatrix}
T_{0}  &  S_{01} & 0       & \cdots &  0  \\
  0    &  T_{1}  & S_{11}  & \cdots & 0   \\
\vdots &   \vdots    & \ddots  & \ddots & \vdots   \\
0      &    0    &   \cdots  & T_{n-2} &S_{n-2,n-1}\\
  0    &      0  &   \cdots  &   0    & T_{n-1}\\
\end{pmatrix}
\end{equation*}
\end{definition}

Since $T$ and $\widetilde{T}$ in $\mathcal{B}_{n}(\Omega)$ are unitary equivalent if and only if $E_{T}$ is equivalent to $E_{\widetilde{T}}$, and their classical curvature and its covariant derivatives is also a completely unitary invariant. From section \ref{sec2.1}, it can be seen that the calculation of the curvature depends on the selection of holomorphic frames. Next, we will provide the structure of the invertible holomorphic matrix between different holomorphic frames of operators in $\mathcal{OFB}_{n}(\Omega)$.

We provide a special structure for the holomorphic frame of operator $T$ in $\mathcal{OFB}_{n}(\Omega)$. Let $t_{n-1}$
be an holomorphic frame of $E_{T_{n-1}}$ and $t_{i-1}=-S_{i-1,i}t_{i}$ for $1\leq i\leq n-1$. Suppose that
\begin{equation}\label{eq14}
\gamma_{k}=t_{0}^{(k)}+{k \choose 1}t_{1}^{(k-1)}+\cdots+i!{k \choose i}t_{i}^{(k-i)}+
\cdots+(k-1)!{k \choose k-1}t_{k-1}'+k!t_{k},\quad 0\leq k\leq n-1,
\end{equation}
 then
$\{\gamma_{0},\gamma_{1},\cdots,\gamma_{n-1}\}$ is a holomorphic frame of $E_{T}$.

\begin{lemma}\label{lm3-1}
Let $
T=\begin{tiny}
\begin{pmatrix}
T_{0}  &  S_{01}        & \cdots &  0  \\
  0    &  T_{1}   & \cdots & 0   \\
\vdots &   \vdots    & \ddots   & \vdots   \\
  0    &      0  &   \cdots     & T_{n-1}\\
\end{pmatrix}\end{tiny}\in\mathcal{OFB}_{n}(\Omega)$, and let $t_{n-1}$ and $\widetilde{t}_{n-1}$
be two different holomorphic frames of $E_{T_{n-1}}$.
Suppose that $\{\gamma_{0},\gamma_{1},\cdots,\gamma_{n-1}\}$ and $\{\widetilde{\gamma}_{0},\widetilde{\gamma}_{1},\cdots,\widetilde{\gamma}_{n-1}\}$
are two different holomorphic frames of $E_{T}$ that satisfy (\ref{eq14}).
Then there is an invertible holomorphic matrix $\phi=(\phi_{i,j})_{i,j=0}^{n-1}$
such that
$$
(\widetilde{\gamma}_{0},\widetilde{\gamma}_{1},\cdots,\widetilde{\gamma}_{n-1})=
(\gamma_{0},\gamma_{1},\cdots,\gamma_{n-1})\phi,
$$
where $\phi_{i,j}=0$ for $i>j$ and $\phi_{i,j}={j \choose i}\phi_{0,0}^{(j-i)}$ for $i\leq j.$
\end{lemma}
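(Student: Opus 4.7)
The plan is to reduce everything to the rank-one freedom of the bottom line bundle. Since $E_{T_{n-1}}$ has rank one, any two holomorphic frames differ by a nowhere-vanishing holomorphic multiplier, so there is a $\phi_{0,0}\in\text{Hol}(\Omega)$ with $\widetilde{t}_{n-1}=\phi_{0,0}\,t_{n-1}$. My first step is to propagate this scaling down the flag: because $T_{i-1}S_{i-1,i}=S_{i-1,i}T_{i}$, the operator $S_{i-1,i}$ is $\mathbb{C}$-linear and sends $\ker(T_{i}-\lambda)$ into $\ker(T_{i-1}-\lambda)$, so the recursion $t_{i-1}=-S_{i-1,i}t_{i}$ together with the analogous one for $\widetilde{t}_{i-1}$ yield, by descending induction on $i$, the uniform identity $\widetilde{t}_{i}=\phi_{0,0}\,t_{i}$ for every $0\leq i\leq n-1$.

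Next, I would substitute this into the defining formula (\ref{eq14}) for $\widetilde{\gamma}_{k}$ and expand by the Leibniz rule, obtaining
\[
\widetilde{\gamma}_{k}=\sum_{i=0}^{k}\sum_{j=0}^{k-i}i!\binom{k}{i}\binom{k-i}{j}\phi_{0,0}^{(j)}t_{i}^{(k-i-j)}.
\]
To match this against the candidate $\sum_{s=0}^{k}\binom{k}{s}\phi_{0,0}^{(k-s)}\gamma_{s}$, I substitute the definition of $\gamma_{s}$ on the right, set $s=k-j$ on the left, and compare the coefficient of $t_{i}^{(s-i)}$ for each $0\leq i\leq s\leq k$. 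This reduces the lemma to the trinomial identity
\[
\binom{k}{i}\binom{k-i}{k-s}=\binom{k}{s}\binom{s}{i},
\]
both sides being equal to the multinomial coefficient $k!/\bigl(i!(s-i)!(k-s)!\bigr)$. I expect this combinatorial bookkeeping to be the only technical point; the conceptual content is already captured by the uniform scaling $\widetilde{t}_{i}=\phi_{0,0}t_{i}$, and the main obstacle is simply keeping the index manipulations straight.

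Finally, invertibility of $\phi$ is automatic: the matrix is upper triangular with diagonal entries $\phi_{i,i}=\binom{i}{i}\phi_{0,0}^{(0)}=\phi_{0,0}$, and since $\phi_{0,0}$ is nowhere vanishing on $\Omega$, $\det\phi=\phi_{0,0}^{\,n}$ is a nowhere vanishing holomorphic function on $\Omega$. Thus $\phi$ is an invertible holomorphic matrix-valued function with the required form, completing the proof.
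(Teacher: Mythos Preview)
Your proof is correct. The paper takes a slightly different path: it first notes that some invertible holomorphic transition matrix $\phi$ must exist between the two frames of $E_T$, then determines its entries by projecting the relation $\widetilde{\gamma}_k=\sum_j\phi_{j,k}\gamma_j$ onto the block $\mathcal{H}_0$, obtaining $\widetilde{t}_0^{(k)}=\sum_{i\le k}\phi_{i,k}\,t_0^{(i)}$, and compares this with the Leibniz expansion of $\widetilde{t}_0=\phi_{0,0}t_0$. Your approach is more constructive: you propagate the single scalar $\phi_{0,0}$ through \emph{all} levels via the intertwining relations $T_{i-1}S_{i-1,i}=S_{i-1,i}T_i$ to get $\widetilde{t}_i=\phi_{0,0}t_i$ for every $i$, and then verify the full frame identity directly via the trinomial coefficient calculation. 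Both arguments ultimately rest on the same Leibniz manipulation; your version has the mild advantage that it does not require the pointwise linear independence of $t_0,t_0',\ldots,t_0^{(k)}$ inside $\mathcal{H}_0$ (which the paper implicitly uses when matching coefficients there), while the paper's version avoids the explicit double-sum bookkeeping.
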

\begin{proof}
Since $\{\gamma_{0},\gamma_{1},\cdots,\gamma_{n-1}\}$ and $\{\widetilde{\gamma}_{0},\widetilde{\gamma}_{1},\cdots,\widetilde{\gamma}_{n-1}\}$
are two different holomorphic frames of $E_{T}$, there is an invertible holomorphic matrix $\phi=(\phi_{i,j})_{i,j=0}^{n-1}$
such that
$(\widetilde{\gamma}_{0},\widetilde{\gamma}_{1},\cdots,\widetilde{\gamma}_{n-1})=
(\gamma_{0},\gamma_{1},\cdots,\gamma_{n-1})\phi.$
It follows that
\begin{equation}\label{eq13}
\widetilde{\gamma}_{l}=\phi_{0l}\gamma_{0}+\phi_{1l}\gamma_{1}+\cdots+
\phi_{n-1,l}\gamma_{n-1},\quad 0\leq l\leq n-1.
\end{equation}
Take $l=0$ in (\ref{eq13}), and obtain $\widetilde{t}_{0}=\phi_{0,0}t_{0}$ and $\phi_{i,0}=0$ for $1\leq i\leq n-1$ from (\ref{eq14}).
Similarly, taking $l=k, 1\leq k\leq n-1,$ in (\ref{eq13}), we obtain that $\widetilde{t}_{0}^{(k)}=\phi_{0,k}t_{0}+\phi_{1,k}t_{0}^{\prime}+\cdots+
\phi_{k,k}t_{0}^{(k)}$ and $\phi_{i,k}=0$ for $k+1\leq i\leq n-1$.
Therefore, from $\widetilde{t}_{0}=\phi_{0,0}t_{0}$, we have that
$\widetilde{t}_{0}^{(k)}=\sum\limits_{i=0}^{k}{k \choose i}\phi_{0,0}^{(k-i)}t_{i}$, and then $\phi_{i,k}={k \choose i}\phi_{0,0}^{(k-i)}$ for $0\leq i\leq k.$
\end{proof}

\begin{remark}
Let $T,\widetilde{T}\in\mathcal{OFB}_{n+1}(\Omega)$ and $T\sim_{u}\widetilde{T},$
there is a unitary operator $U$ such that $UT=\widetilde{T}U.$
Suppose that $\{\gamma_{0},\gamma_{1},\cdots,\gamma_{n}\}$ and $\{\widetilde{\gamma}_{0},\widetilde{\gamma}_{1},\cdots,\widetilde{\gamma}_{n}\}$ are holomorphic frames satisfying (\ref{eq14}) for $E_{T}$ and $E_{\widetilde{T}},$ respectively.
If $(U\gamma_{0},\cdots,U\gamma_{n-1})
=(\widetilde{\gamma}_{0},\cdots,\widetilde{\gamma}_{n-1})$,
then $U(t_{i})=\phi\widetilde{t}_{i}, 0\leq i\leq n-1,$ for some  non-zero holomorphic function $\phi$.
\end{remark}
Let $M_{z}^{*}$ is Hardy shift operator and $T\in\mathcal{B}_{1}^{1}(\Omega)$. In infinite dimensions, by \cite{CD}, we know that the $M_{z}^{*}\sim_{u}T$ if and only if the classical curvature $\mathcal{K}_{M_{z}^{*}}=\mathcal{K}_{T}$ if and only if there is a holomorphic function $\phi(w)$ such that $\frac{1}{(1-\vert w\vert^{2})^{2}}=\vert\phi(w)\vert^{2}H_{T}(w),$ where $\frac{1}{(1-\vert w\vert^{2})^{2}}$ and  $H_{T}(w)$ are the Gram matrix of $M_{z}^{*}$ and $T$, respectively.

For the finite dimensional case.
Let $P(w)=P_{\ker(M_{z}^{*}-w)}$, $Q(w)=P_{\ker(T-w)}\in M_{n+1}(\mathbb{C})$ be two holomorphic curves, and $ran\,P(w)=\bigvee\{1,w,\cdots,w^{n}\}$, $ran\,Q(w)=\bigvee\{b_{1}(w),b_{2}(w),\cdots,b_{n}(w)\}$, where $w^{i},b_{i}(w),0\leq i\leq n$ are holomorphic functions. Then the necessary and sufficient conditions for $P\sim_{u}Q$ are given below.

Notice that $P\sim_{u}Q$ if and only if there is a holomorphic function $\phi(w)$ such that Gram matrix $H_{P}=\vert\phi\vert^{2}H_{Q}$, that is
\begin{align}\label{e3.4}
\sum\limits_{i=1}^{n}\vert w^{i}\vert^{2}=\vert\phi(w)\vert^{2}\sum\limits_{i=1}^{n}\vert b_{i}(w)\vert^{2}=\sum\limits_{i=1}^{n}\vert\widetilde{b_{i}}(w)\vert^{2},
\end{align}
where
$\widetilde{b_{i}}(w)=\phi(w)b_{i}(w)$.
The power series expansion for $\widetilde{b_{i}}(w)$, we have that $\widetilde{b_{i}}(w)=\sum\limits_{j=0}^{\infty}\beta_{ij}w^{j},\,0\leq i\leq n$, then $\vert\widetilde{b_{i}}(w)\vert^{2}=\widetilde{b_{i}}(w)\overline{\widetilde{b_{i}}(w)}=\sum\limits_{j,k=0}^{\infty}\beta_{ij}w^{j}\overline{\beta_{ik}w^{k}}$.
By (\ref{e3.4}), we have that $\sum\limits_{i=0}^{n}w^{i}\overline{w}^{i}=\sum\limits_{i=0}^{n}(\sum\limits_{j,k=0}^{\infty}\beta_{ij}w^{j}\overline{\beta_{ik}w^{k}})=\sum\limits_{j,k=0}^{\infty}(\sum\limits_{i=0}^{n}\beta_{ij}\overline{\beta_{ik}})w^{j}\overline{w}^{k}$,
then
$\sum\limits_{i=0}^{n}\beta_{ij}\overline{\beta_{ik}}
=\begin{cases}
1,\quad j=k=0,1,\cdots,n\\
0,\quad j\neq k,\textup{ or } j=k>n.
\end{cases}$
That is $\widetilde{b_{i}}(w)=\sum\limits_{j=0}^{n}\beta_{ij}w^{j},\,0\leq i\leq n$, $\sum\limits_{i=0}^{n}\vert\beta_{ij}\vert^{2}=1$ and $\sum\limits_{i=0}^{n}\beta_{ij}\overline{\beta_{ik}}=0,\,j\neq k$.
It follows that $b_{i}(w)=\frac{1}{\phi(w)}\beta_{ij}(w)^{j}$, $(\beta_{0,j},\cdots,\beta_{n,j})$ in the unit circle of $\mathbb{C}^{n+1}$ and $(\beta_{0,j},\cdots,\beta_{n,j})\perp(\beta_{0,k},\cdots,\beta_{n,k})$, where $1\leq i,j,k\leq n$ and $j\neq k$.
On the contrary can also be set up.

Let $\mathcal{J}_{E_{0}}^{(i)}$, $0\leq i\leq n-1$, be $n$-flag induced by the jet bundle of $E_{0}$, and $T,\widetilde{T}\in\mathcal{FB}_{n}(\Omega)$. Let $t_{0}$, $\widetilde{t}_{0}$ are non-zero cross-section of the line bundle $E_{0}$, $\widetilde{E}_{0}$, respectively, and $-t_{i}=-S_{i,i+1}t_{i+1}$, $-t_{i}^{(j-i-1)}=s_{i,j}t_{j}$. Then $E_{i}=\vee{\{t_{0},t'_{0}+t_{1},\cdots,t_{0}^{(i)}+t_{1}^{(i-1)}+\cdots+t_{i}\}}.$
Let $J=(J_0,J_1,\cdots,J_{n-1}),$ and $J_{i}:\mathcal{J}_{E_{0}}^{(i)}\rightarrow E_{i},0\leq i\leq n-1$, is isomorphic and satisfies
\begin{equation*}
J_{i}
\begin{pmatrix}
t_{0}^{(j)}\\
0 \\
\vdots\\
0\\
\end{pmatrix}
=
\begin{pmatrix}
t_{0}^{(j)}\\
t_{1}^{(j-1)}\\
\vdots\\
t_{j}\\
\end{pmatrix}
0\leq j\leq i\leq n-1.
\end{equation*}
Let $i:\mathcal{J}_{E_{0}}^{(j)}\rightarrow\mathcal{J}_{E_{0}}^{(j+1)}$ and $i:E_{(j)}\rightarrow E_{(j+1)}$, $0\leq j\leq n-2$, be the embedded mappings.
Then we have that
Let there is a unitary $U_0,U_i,0\leq i\leq n-1$, such that $U_0:\mathcal{J}_{E_{0}}^{(i)}\rightarrow\mathcal{J}_{\widetilde{E}_{0}}^{(i)}$ $U_i:E_i\rightarrow\widetilde{E}_{i}$ and $U_0t_0=\widetilde{t}_0$. Then we have the following graph that is commutative for any $0\leq i\leq n-1$, if and only if $E_T\stackrel{u}{\sim}E_{\widetilde{T}}$.

\begin{proof}
Since $U_{i}J_{i}=\widetilde{J}_{i}U_{0}$, then $U_{i}J_{i}(t_{0})=\widetilde{J}_{i}U_{0}(t_{0})$, that is $U_{i}t_{0}=\widetilde{J}_{i}\widetilde{t}_{0}=\widetilde{t}_{0}.$ Thus, $U_{i}t_{0}=\widetilde{t}_{0}$.
Without loss of generality, we suppose that $U_{i}t_{j-1}=\widetilde{t}_{j-1}$, then
$U_{i}J_{i}(t_{0}^{(j)})=\widetilde{J}_{i}U_{0}(t_{0}^{(j)}),$ that is
$$U_{i}(t_{0}^{(j)}+t_{1}^{(j-1)}+\cdots+t_{j})=\widetilde{J}_{i}\widetilde{t}_{0}^{(j)}=\widetilde{t}_{0}^{(j)}+\widetilde{t}_{1}^{(j-1)}+\cdots+\widetilde{t}_{j}.$$
It follows that $U_{i}t_{j}=\widetilde{t}_{j}, 0\leq j\leq i$.
That is $U_{i}=U^{0}\oplus U^{1}\oplus\cdots\oplus U^{i}.$
Thus $E_{T}\sim_{u}E_{\widetilde{T}}.$
On the contrary can also be set up.
\end{proof}

For any $T=((T_{i,j}))\in FB_n(\Omega)$,  $\mathcal{J}_{E_{0}}^{(i)}$ denote the Jet bundle induced by $E_0=E_{T_0}$ and $E_i=E_{T|_{\bigoplus\limits_{i=0}^{i}\mathcal{H}_i}}$.
Then we have two n-flag related to the holomorphic bundles as the following.
\begin{equation*}
 \begin{aligned}
&(1)\,\, E_0(\lambda)=\mathcal{J}_{E_{0}}^{(0)}(\lambda)\subset \mathcal{J}_{E_{0}}^{(1)}(\lambda) \subset \cdots \subset \mathcal{J}_{E_{0}}^{(i)}(\lambda)\cdots\subset \mathcal{J}_{E_{0}}^{(n-1)}(\lambda) \\
&(2)\,\,E_0(\lambda)=E_{T_0}(\lambda)\subset E_{T|_{\bigoplus\limits_{i=0}^{1}\mathcal{H}_i}}(\lambda) \subset \cdots \subset E_{T|_{\bigoplus\limits_{i=0}^{i}\mathcal{H}_i}}(\lambda)\cdots\subset E_{T|_{\bigoplus\limits_{i=0}^{n-1}\mathcal{H}_i}}(\lambda)\\
 \end{aligned}
\end{equation*}
Inspired by the theorem above, we can define the bundle map tuple $J=(J_0,J_1,\cdots,J_{n-1})$  between these two $n$-flag:
$$J_i:\mathcal{J}_{E_{0}}^{(i)}\rightarrow E_i$$ such that $$J_i(w)(t^{(k)}_0(\lambda))=t^{(k)}_0(\lambda)+t^{(k-1)}_1(\lambda)+\cdots+t_{k}(\lambda), k\leq i.$$

Then the following diagram is commutative:
\begin{equation*}
\xymatrix{
\mathcal{J}_{E_{0}}^{(0)}   \ar[d]_i \ar[r]^{J_0} &E_0\ar[d]^i\\
\mathcal{J}_{E_{0}}^{(1)}   \ar[d]_i \ar[r]^{J_1} &E_1\ar[d]_i\\
\vdots \ar[d]_i   &\vdots   \ar[d]_i\\
\mathcal{J}_{E_{0}}^{(n-1)} \ar[r]^{J_{n-1}}      &E_{n-1}}
\end{equation*}

\end{document}